\documentclass[10pt]{article}

\usepackage{graphicx}%
\usepackage{multirow}%
\usepackage{amsmath,amssymb,amsfonts}%
\usepackage{amsthm}%
\usepackage{mathrsfs}%
\usepackage[title]{appendix}%
\usepackage{xcolor}%
\usepackage{textcomp}%
\usepackage{manyfoot}%
\usepackage{booktabs}%
\usepackage{algorithm}%
\usepackage{algorithmicx}%
\usepackage{algpseudocode}%
\usepackage{listings}%
\usepackage{enumitem,relsize}
\usepackage{indentfirst}
\numberwithin{equation}{section} 

\usepackage{geometry}

\newtheorem{theorem}{Theorem}[section]
\newtheorem{proposition}[theorem]{Proposition}%
\newtheorem{remark}[theorem]{Remark}%
\newtheorem{lemma}[theorem]{Lemma}
\newtheorem{corollary}[theorem]{Corollary}%
\numberwithin{equation}{section}

\newcommand{\md}{\mathrm{d}}
\newcommand{\mr}{\mathbf{r}}
\newcommand{\mm}{\mathbf{m}}
\newcommand{\bS}{{\mathbb{S}^1}}
\newcommand{\bfn}{{\mathbf{n}}}
\newcommand{\qeN}{{\Xi}}
\newcommand{\errc}{{\eta}}
\newcommand{\scrG}{{\mathscr{G}}}
\newcommand{\scrP}{{\mathscr{P}}}
\newcommand{\cQdom}{{\mathcal{Q}_{\mathrm{phys}}}}
\newcommand{\cQdomM}{{\mathcal{Q}_{\mathrm{phys}}^M}}

\allowdisplaybreaks[4]

\begin{document}

\title{Tensor gradient flow with quasi-entropy for smectic liquid crystals and discretizations keeping coupled physical constraints}

\author{ Jie Xu\footnote{SKLMS, Institute of Computational Mathematics and Scientific/Engineering Computing (ICMSEC), Academy of Mathematics and Systems Science (AMSS), Chinese Academy of Sciences, Beijing, China. Email: xujie@lsec.cc.ac.cn}, Xiaomei Yao\footnote{School of Mathematics and Physics, North China Electric Power University, Beijing, China. Email: yaoxm16@ncepu.edu.cn} }

\date{}


\maketitle

\begin{abstract}
  A gradient flow for the concentration and a $2\times 2$ tensor is constructed to describe smectic liquid crystals. 
  The free energy consists of the entropy term and interaction term involving squared second order spatial derivatives.
  The entropy term incorporates the concentration in the quasi-entropy originally proposed for the tensor only, which is a strictly convex and lower semicontinuous function imposing coupled constraints between the concentration and the tensor.
  An evolution equation for the boundary normal derivative of the concentration is proposed in addition to the equations for the concentration and the tensor, giving an energy dissipation system. 
  Numerical schemes are designed with emphases on using the entropy term to keep the coupled constraints, and the discretization of the boundary normal derivatives satisfying summation by parts.
  Existence, uniqueness, energy dissipation and error estimates are established.  Numerical results indicate the efficiency and robustness of the scheme. 
  Configurations of defects different from other layer structures are observed.

\textbf{Keywords.} Smectic liquid crystals; Tensor model; Gradient flow; Coupled constraints; Boundary evolution equations; Stability and error analysis
\end{abstract}

\section{Introduction}
The properties of liquid crystals are determined to a great extent by local
anisotropy typically generated by orientational distribution of non-spherical
molecules with rigidity. 
The local anisotropy may either dominate the orientational structure, resulting
in nematic phases~\cite{deG1993,Stark2001,Skarabot2007two,Mulder2011,
	Beller2015Shape,Majumdar2017,Wang2018,M2019Colloidal,Yin2020Construction}, or
couple with concentration modulation, giving rise to smectic and columnar
phases~\cite{deG1972,Sackmann1989,Leslie1991,Netz1997Interfaces,
	Tsori2000Defects,Duque2002Theory,Mai2012,Hu2014,Nemitz2016,Selmi2017}. Furthermore, the local anisotropy leads to more possibilities
of structural imperfections when external forces, such as geometric
confinements, boundary anchorings or preferences, are present. For nematics,
topological defects of various kinds have been studied
extensively~\cite{Schopohl1988,Mkaddem2000,Lavrentovich2003,Lewis2014,Hu2016On,Garlea2016Finite,Canevari2017,Hashemi2017Fractal,Wang2018,Yin2020Construction,Nys2022,Yao2022,Haputhanthrige2024,Han2024,Bahr2025}. Defects in smectics exhibit distinctive
configurations since they may involve interruptions of both layer structures
and the orientational
order~\cite{Kralj1996,Liang2013,Repula2018,Wittmann2021Particle}.
On the theoretical and computational aspect, some cases are
discussed preliminarily while many more remain to be
examined~\cite{Han2015From,Mei2015On,Xia2021Structural,Xia2023Variational,Shi2025A}.

Computational investigations of smectics can be coarsely classified into
microscopic and macroscopic approaches. Microscopic approaches, such as
molecular simulations of small rigid molecules and self-consistent field
simulations of polymer chains with rigid
monomers~\cite{Wang2021Analytical,Monderkamp2021Topology,Wittmann2021Particle,Cai2022Tilt,Monderkamp2022Topological,Wittmann2023Colloidal},
generally require high computational costs as they need to compute the state of
every single molecule. In macroscopic approaches, the system is typically
governed by a free energy of field variables for both concentration and local
anisotropy, whether the focus is equilibrium states or dynamics. For rod-like
molecules, depending on the variables for the local anisotropy, the models can
be categorized into tensor
models~\cite{Mukherjee2001Simple,Brand2001,Biscari2007,Mei2015On,Ball2015Discontinuous,Xia2021Structural,Paget2022,Xia2023Variational,Ball2023A}
and vector
models~\cite{deG1972,ChenLubensky,Poniewierski1991,Linhananta1991,Abukhdeir2008,Abukhdeir2008Defect,Pevnyi2014Modeling,Zappone2023One}.
The free energy necessarily has terms for the coupling between local anisotropy
and concentration variations, basically written down according to assumptions on layer modulation. Another point to be noted is that vector models
possess fewer variables but are known to have limitations when studying
defects. Hence, it would be more appropriate to use tensor models, which
typically describe the local anisotropy by a second-order symmetric traceless
tensor, to examine defects in smectics.

The free energy in tensor models can also be constructed by expansion from the
microscopic theory, which does not require a priori assumptions on structural
configurations. Such a procedure has been applied to the studies of nematic
phases of different
kinds~\cite{Straley_1974, Bisi_2006, Han2015From, Xu2018A, Wang2021Modelling},
as well as smectic phases of rod-like molecules~\cite{Han2015From,Mei2015On}.
One advantage of this class of free energies is that they include
possible couplings up to certain order, so that they might be more suitable for
structural imperfections. However, to obtain a model ready for the
computational studies, especially dynamics, of defects in smectics, many
efforts need to be made on both modeling and numerical methods.

For the tensor free energy derived from microscopic theory, it is known that
the free energy always contains an entropy term. For nematic phases where the
concentration is constant, the entropy term is derived through the maximum
entropy state under the given value of the tensor. The advantage of having this
entropy term is that it constrains the eigenvalues of the tensor within the
physical range. However, it is an implicit function of the tensor involving
integrals on the directions of rods. Such a formulation poses substantial computational challenges,
since the implicit function needs to be solved at every grid point.
On the other hand, the role
of the entropy term is stabilization making the system tend to the isotropic
state and is independent of the interactions between rod-like molecules. In
this light, it is appropriate to seek for an alternative function in simple
form with the essential properties of the entropy term to take its place. To
this end, the quasi-entropy is proposed~\cite{Xu2022Quasi} for general rigid
molecules in the case of constant concentration. It maintains several crucial
mathematical properties of the original entropy term, including strict
convexity, ability to constrain the covariance matrix positive definite,
rotational invariance, and consistency in symmetry reductions. It is also
verified for several representative rigid molecules that the free energy with
the quasi-entropy captures the underlying physics in the spatially homogeneous
systems. Furthermore, for spatially inhomogeneous systems,
numerical methods are constructed for a gradient flow with emphases on
maintaining the physical range and energy dissipation~\cite{Wang2023Q}.
The evolution of defect patterns also proves to be consistent with previous
results.

In this work, we discuss rod-like molecules exhibiting smectic phases (so that
the concentration varies spatially) confined in a bounded 2D region with their directions lying in a plane. 
In this case, the tensor describing local anisotropy is $2\times 2$ symmetric
traceless, whose physical range of eigenvalues now depends on the
concentration.
It is known that confined systems of the nematic phase typically possess multiple local energy minimizers.
Their configurations are significant whether or not they are global minimizers. 
To explore these energy minimizers and their relations, it is necessary to evolve from different initial states according to certain equations. 
To this end, we aim to construct a tensor gradient flow, and discuss numerical methods maintaining its essential properties.
A gradient flow is an energy dissipative system, 
determined not only by the free energy but also by its dissipation operator.
Here, we choose the dissipation operator as simple as possible, just to ensure
that it keeps the mass conservation and the tensor symmetric traceless. In this
way, we could pay more attention dealing with the free energy.
It shall be
noted that this choice does not imply that the dissipation operator is an
ingredient less significant. Actually, the dissipation operator might be highly
coupled if one considers dynamic models from the microscopic
theory~\cite{Yu2010A,Xu2018}, resulting in other problems to be discussed
later separately.

So, we could concentrate on the tensor free energy. Since the concentration may
vary spatially, the free energy shall be a functional of the concentration and
the tensor. For the free energy derived from microscopic theory, the major
difference from the case with constant concentration lies within the entropy
term. Therefore, one of the core targets in this work is to extend the
quasi-entropy to incorporate concentration variations. We shall write down its
form and show the essential mathematical properties. In particular, the
quasi-entropy for the varying concentration has a domain neither open nor
closed. This arises from the definition when the concentration takes zero. It
turns out that the quasi-entropy cannot be continuous here, but we can still
show that it is strictly convex and lower semicontinuous. Meanwhile, the
quasi-entropy satisfies rotational invariance, and acts as a barrier function
imposing coupled constraints on the concentration and the tensor, which is
analogous to the case of constant concentration.

Another problem arises in the boundary conditions. The description of smectic
phases involves concentration modulation that calls for at least squared
second-order spatial derivative terms in the free energy. If we consider
Dirichlet type boundary conditions, both function values and normal derivatives
are necessary. However, there lacks the information to determine the value of
normal derivatives no matter by measurements or theoretical derivations. In
this situation, we would introduce an evolution equation for the normal
derivatives from the free energy. Combined with the gradient flow for the
concentration and the tensor, we show that these equations form an energy
dissipative system. The quasi-entropy and the gradient flow will be discussed
in Sec.~\ref{GF}.

Next, we propose numerical methods in Sec.~\ref{NM}, including both time and
spatial discretizations, for the gradient flow system.
Since the gradient flow is written down mainly for studying various steady states, more emphasis shall be put on keeping essential structures and properties of the gradient flow. 
In particular, due to the pattern of
the smectic phases, the concentration and the tensor may become very close to
the boundary of the domain. Therefore, it requires special care to maintain the
coupled constraints of the concentration and the tensor. The coupled
constraints are handled by utilizing the properties of the quasi-entropy, which
is treated implicitly. We show that the resulting scheme, first-order in time,
is uniquely solvable within the domain of the concentration and the tensor,
nonzero almost everywhere, and is energy dissipative. In the proof, the
convexity and lower semicontinuity of the quasi-entropy play key roles. The
error estimate is then established with the normal derivatives on the
boundaries included. For the spatial discretization, we consider the finite
difference in a rectangular region. Since both high-order derivatives inside
and normal derivatives on the boundaries are present, the scheme needs delicate
choice to be consistent with the continuous case. This can be done by
discretizing the free energy in each rectangular cell, followed by finding out
the corresponding discretized variational derivatives. To accomplish this
goal, ghost points are necessarily included outside the rectangular region. The
spatial truncation error is second-order. We verify that the finite difference
scheme satisfies the crucial summation by parts involving the normal
derivatives on the boundaries. Armed with these equalities, we show that the fully discrete scheme has a unique solution, with the coupled
constraints for the concentration and the tensor satisfied for the grid points
inside. Following that, the energy dissipation and the error estimates are
established.

The careful construction of numerical methods is indeed necessary, as we
realize from the numerical examples in Sec.~\ref{NE}. If we adopt a simple
linear semi-implicit scheme, the numerical solution easily goes out of the
domain, unless a extremely small time step is chosen. On the contrary, the scheme
proposed in the previous section allows time steps several magnitues larger.
Meanwhile, it does
not require many nonlinear iterations, so that in total it has superiority in
efficiency and stability. Then, a few cases on the evolution of defects in
smectic phases are examined. We observe that the local anisotropy affect to a large
extent the disruption of layer structures, exhibiting evident distinctions from
the defects in layer structures without local anisotropy.

Although in this work we discuss the special case of the concentration coupled
with the $2\times 2$ tensor, the quasi-entropy can be written in the same way
for the $3\times 3$ tensor, and further multiple tensors if nonaxisymmetric
rigid molecules are examined. The quasi-entropy for the concentration coupled
with tensors can also be useful in the dynamic tensor models derived from
microscopic theory. For these problems, we shall give a few concluding remarks
in Sec.~\ref{conclusion}.

\section{Tensor Model and Gradient Flow}\label{GF}
\subsection{Order parameters}
The system we study consists of rigid, rod-like molecules with their directions lying in a plane. 
Let \(f(\mr,\mm)\ge 0\) denote the number density of molecules at position
\(\mr\in\mathbb{R}^2\) with orientation \(\mm\in\bS\). The order parameters are
defined as directional moments of \(f\).
A rod-like molecule is assumed to have the head-to-tail symmetry, so that $f(\mr,-\mm)=f(\mr,\mm)$, which means that the odd-order moments of $\mm$ vanish. 
We introduce zeroth and second moments characterizing the concentration and the local anisotropy at certain position $\mr$, 
\begin{equation*}
  c(\mr)=\int_{\bS}f(\mr,\mm)\,\md\mm,\nonumber\quad
  Q(\mr)=\int_{\bS}\left(\mm\otimes\mm-{1\over2}I\right) f(\mr,\mm)\,\md\mm,
\end{equation*}
where $\otimes$ represents the tensor product, $\md\mm$ is the uniform unit measure on $\bS$ (i.e., $\int_{\bS}\md\mm=1$), and the tensor $\mm\otimes\mm$ can be understood as a $2\times 2$ matrix. 
In the above, we subtract from the second moment a half of the $2\times 2$ identity matrix $I$ for the integrand to make $Q(\mr)$ symmetric traceless. 
For conveniences of further discussions, we define the directional density function $\rho(\mr,\mm)=f(\mr,\mm)/c(\mr)$, which satisfies $\int_{\bS} \rho\md\mm=1$ (if $c(\mr)=0$, $\rho(\mr,\mm)$ can be chosen as any normalized density). 

By the definition, when $c$ is zero, $Q$ is also the zero matrix.
When $c$ is positive, we have $Q+cI/2=\int_{\bS}\mm\otimes\mm f\md\mm$ positive definite.
In other words, the physical range of $(c,Q)$ is given by 
\begin{equation*}
  \cQdom =\Big \{ (c,Q)\Big|c=0,Q=0,\text{ or } c>0,\, Q=Q^t,\, \mathrm{tr}Q=0,\,\lambda({Q})\in\Big(-{c\over2},{c\over2}\Big)\Big \},
\end{equation*}
where $\lambda(Q)$ represents the eigenvalues of $Q$. 

We consider the case where the rod-like molecules are located within a bounded 2D region $\Omega$, so that the mass conservation is assumed, 
\begin{align}
  \frac{1}{V(\Omega)}\int_{\Omega} c(\mr)\md\mr=c_0, \label{avc}
\end{align}
where $V(\Omega)$ represents the area of $\Omega$.

The dot product of two tensors is given by summing up the products of corresponding components, i.e. $U\cdot V=U_{ij}V_{ij}$. 
Hereafter, we adopt the convention of summation on repeated indices.
The differential operators can then be expressed as $\Delta=\partial_{ii}$, $\Delta^2=\partial_{iijj}$. 
If necessary, the identity tensor (matrix) is also written as $I_{ij}=\delta_{ij}$ using the Kronecker delta. 
The dot product between identical tensors is denoted as $|U|^2=U\cdot U$, and similarly $|\nabla u|^2=\partial_iu\partial_iu$, $|\nabla U|^2=\partial_iU_{jk}\partial_iU_{jk}$, etc. 
The inner product $(U,V)$ on $\Omega$ is defined as $\int_{\Omega}U\cdot V\md\mr$, and the $L^2$-norm is denoted as $\|U\|^2=(U,U)$.
When we mention other norms, we shall specify it in the subscript, such as $\|\cdot\|_{H^2(\Omega)}$.

\subsection{Free energy with quasi-entropy incorporating concentration variations}

The free energy derived from molecular theory consists of an entropy term and interaction terms, denoted as 
\begin{align}
  E[c,Q]=E_{\mathrm{ent}}+E_r. \nonumber
\end{align}
The interaction energy $E_r$ is obtained from gradient expansion.
Its complete form has been obtained for general rigid molecules in preceding works \cite{Han2015From}, where a rigid molecule may undergo any rotation in $SO(3)$. 
Although in this work the rod-like molecule is only allowed to rotate within a plane, the terms in the gradient expansion can be deduced in the same way, and the results are actually identical. 
The interaction energy contains bulk terms that do not involve spatial derivatives, and elastic terms that do involve them. 
To describe smectic structures, in elastic terms it would be necessary to include quadratic terms of spatial derivatives up to second-order. 
All the quadratic terms of first-order spatial derivatives are included, since they characterize the essential couplings between the orientational order and modulations (cf. Sec. V of \cite{Xu2020}). 
For the quadratic terms of second-order derivatives, we keep them to the minimally required, that is, only one term $|\Delta c|^2=\partial_{ii}c\partial_{jj}c$ is included to stabilize modulations (cf. discussions in \cite{Mei2015On}). 
Consequently, the interaction energy is written as 
\begin{align}
  E_r=&\,\int_{\Omega}c_{00}c^2+c_{02}\left|Q\right|^2+g_e\,\md\mr, \nonumber
\end{align}
where
\begin{align}
  g_e=& \, c_{20}\partial_k c\partial_k c+c_{21}\partial_k Q_{ij}\partial_k Q_{ij}+c_{22}\partial_i Q_{ij}\partial_k Q_{kj}+c_{23}\partial_j c\partial_i Q_{ij}
  +c_{40}\partial_{ii}c\partial_{jj}c. \label{elastic}
\end{align}
For the squared elastic terms, the coefficients $c_{40},\, c_{21},\, c_{22}>0$ stabilize the energy, while $c_{20}<0$ characterizes concentration modulations \cite{Mei2015On}. 

We turn to $E_{\mathrm{ent}}[c,Q]$.
Let us start from the entropy term given by the density function $f(\mr,\mm)$, 
\begin{align}
  \int_{\Omega\times \bS} f\log f\md\mr\md\mm+\int_{\Omega} (M-c)\log(1-c/M)\md\mr. \nonumber
\end{align}
where the second term describes a saturation value $c\le M$.
Separating the contribution of the orientational density \(\rho\) from the
entropy term, we rewrite the integrand with respect to \(\md\mr\) as
\begin{equation*}
	c\log c+(M-c)\log(1-c/M)
	+c\int_{\bS}\rho\log\rho\,\md\mm .
\end{equation*}
Notice that $\int_{\bS} \rho\log\rho\md\mm$ represents the entropy of the normalized orientational distribution. 
Hence, a reasonable structure of $E_{\mathrm{ent}}[c,Q]$ is
\begin{equation}
  c\log c+(M-c)\log(1-c/M)+c \zeta(Q/c) \nonumber
\end{equation}
for some function $\zeta$, where by definition $Q/c$ is the tensor averaged by $\rho$, which is exactly the tensor adopted for the nematic liquid crystals. 
One choice of $\zeta$ is to deduce from the maximum entropy state \cite{Han2015From,Mei2015On}.
It leads to a function with nice properties while implicitly defined through the normalized density function $\rho$. 
For rigid molecules allowing $SO(3)$ rotations, an elementary function satisfying these properties given by the maximum entropy state has been proposed as a substitution, called the quasi-entropy \cite{Xu2022Quasi}. 
The quasi-entropy is defined as the log-determinant of the covariance matrix.
For rod-like molecules in a plane, we consider the quasi-entropy in a similar way. 
The covariance matrix now reads
\begin{equation}
  \int_{\bS}\mm\otimes\mm \rho \,\md\mm-\left(\int_{\bS}\mm\rho\,\md\mm\right)\otimes \left(\int_{\bS}\mm\rho\,\md\mm\right). \nonumber
\end{equation}
Since $\rho(\mr,-\mm)=\rho(\mr,\mm)$, it holds $\int_{\bS}\mm\rho\,\md\mm=0$.
Therefore, the covariance matrix is exactly $\int_{\bS}\mm\otimes\mm \rho \md\mm=Q/c+I/2$.
The quasi-entropy is then given by 
\begin{align}
  q(Q/c)=-\log\det\left(\frac{Q}{c}+\frac{I}{2}\right)+\log\det\left(\frac{I}{2}\right). \label{qe_rho}
\end{align}
Let $\nu q$ take the place of $\zeta$ for some $\nu>0$. 
We finally arrive at $E_{\mathrm{ent}}=\int_{\Omega}\qeN(c,Q)\md\mr$, with 
\begin{equation}\label{quasi-entropy}
  \qeN(c,Q)=c\log c+(M-c)\log\left(1-{c\over M}\right)+c\nu q(Q/c). 
\end{equation}

The function \eqref{quasi-entropy} possesses several properties that are crucial for the structures of the gradient flow and its numerical schemes, which we discuss in what follows.
We begin with the function $q$ \eqref{qe_rho}, whose properties result from the log-determinant. 
\begin{lemma}\label{lemma:logdet}
  The function $-\log \det S$ is strictly convex w.r.t. positive definite $S\in \mathbb{R}^{n\times n}$. 
\end{lemma}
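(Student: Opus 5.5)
The plan is to reduce strict convexity to a one-dimensional statement along line segments, and then compute using eigenvalues. The set of symmetric positive definite matrices is convex. I will therefore fix two positive definite matrices $S_0\neq S_1$ and set $S(t)=S_0+tH$ with $H=S_1-S_0\neq 0$ symmetric. It then suffices to show that $\phi(t)=-\log\det S(t)$ is strictly convex on an open interval containing $[0,1]$. (All of $[0,1]$ lies in the positive definite cone, and by openness so does a slightly larger interval.)

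First, I factor out $S_0$ using its symmetric positive definite square root:
\begin{equation*}
  S(t)=S_0^{1/2}\bigl(I+tS_0^{-1/2}HS_0^{-1/2}\bigr)S_0^{1/2}.
\end{equation*}
The matrix $K=S_0^{-1/2}HS_0^{-1/2}$ is symmetric and nonzero, since $S_0^{1/2}$ is invertible. Let its real eigenvalues be $\mu_1,\dots,\mu_n$, not all zero. Multiplicativity of the determinant gives
\begin{equation*}
  \phi(t)=-\log\det S_0-\sum_{i=1}^n\log(1+t\mu_i).
\end{equation*}
On the relevant interval each factor $1+t\mu_i$ is positive, because $S(t)$ is positive definite there.

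Next, I differentiate twice:
\begin{equation*}
  \phi''(t)=\sum_{i=1}^n\frac{\mu_i^2}{(1+t\mu_i)^2}>0.
\end{equation*}
This is strictly positive because at least one $\mu_i\neq0$. So $\phi$ is strictly convex on $[0,1]$. In particular, for $\theta\in(0,1)$ we get $\phi(\theta)<(1-\theta)\phi(0)+\theta\phi(1)$, which is exactly the strict convexity inequality for $S_0$ and $S_1$.

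The only point needing care is that $H\neq0$ forces $K\neq0$, and hence strict rather than mere positivity of $\phi''$. This follows directly from the invertibility of $S_0^{1/2}$.

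An equivalent route is to compute the Hessian directly. One has $D^2(-\log\det)(S)[H,H]=\mathrm{tr}(S^{-1}HS^{-1}H)=|S^{-1/2}HS^{-1/2}|^2$. This is positive for $H\neq0$, and it gives the same conclusion.
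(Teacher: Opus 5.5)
Your proof is correct and complete. The paper gives no argument of its own for this lemma; it only refers to \cite{Wang2023Q}. So there is no competing route to compare against, and your argument supplies the missing proof in self-contained form. You restrict to the segment $S_0+tH$ and conjugate by $S_0^{-1/2}$ to reduce to the spectrum of $K=S_0^{-1/2}HS_0^{-1/2}$. Computing $\phi''(t)=\sum_i\mu_i^2/(1+t\mu_i)^2>0$ then finishes it, and this is the standard proof. The positivity of $1+t\mu_i$ on the interval follows because $I+tK=S_0^{-1/2}S(t)S_0^{-1/2}$ is congruent to $S(t)$. Your closing Hessian formula $\mathrm{tr}(S^{-1}HS^{-1}H)=|S^{-1/2}HS^{-1/2}|^2$ is an equally valid one-line alternative.

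You were right to read ``positive definite'' as symmetric positive definite, which is the only case the paper uses ($S=Q/c+I/2$). Your proof genuinely relies on symmetry, through $S_0^{1/2}$ and the real spectrum of $K$, and without it the claim is false. For example, $I+tJ$ with $J=\begin{pmatrix}0&-1\\ 1&0\end{pmatrix}$ has symmetric part $I$ for every $t$. Yet $-\log\det(I+tJ)=-\log(1+t^2)$ is concave near $t=0$.
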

For the proof, see for example \cite{Wang2023Q}.

The function $q(S)$ is defined for any symmetric traceless tensor $S$ with its eigenvalues lying in $(-1/2,1/2)$. If the eigenvalues exceed this range, we define its value as $+\infty$. 
\begin{proposition}\label{prop:qeQ}
  The function $q(S)$ has the following properties. 
  \begin{enumerate}
  \item It is strictly convex w.r.t. $S$, with the unique minimizer $S=0$. 
  \item $q(S)$ is invariant under rotations: for $T\in \mathbb{R}^{2\times 2}$ satisfying $T^tT=I$, it holds $q(TST^t)=q(S)$. 
  \item It gives a barrier function in the sense that $\lim_{|\lambda(S)|\to (1/2)^-}q(S)=+\infty$.
  \end{enumerate}
\end{proposition}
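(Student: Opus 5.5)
The plan is to reduce all three properties to the eigenvalue description of $S$. Since $S$ is $2\times 2$ symmetric traceless, its eigenvalues are $\pm\lambda$ with $\lambda=|S|/\sqrt{2}\ge 0$. Then $S+I/2$ has eigenvalues $1/2\pm\lambda$, and
\begin{equation*}
  q(S)=-\log\Big(\frac14-\lambda^2\Big)+\log\frac14=-\log\big(1-4\lambda^2\big)=-\log\big(1-2|S|^2\big),
\end{equation*}
valid on the admissible set $\{\lambda<1/2\}$, i.e. $|S|^2<1/2$, and $+\infty$ outside. This closed form will make the barrier property and the minimizer transparent, while convexity will come from Lemma~\ref{lemma:logdet}.

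For item 1, I would first note that the admissible set is convex: it is the preimage of the positive definite cone under the affine map $S\mapsto S+I/2$, intersected with the linear subspace of symmetric traceless tensors. On this set, $q$ is $-\log\det$ composed with an injective affine map, plus a constant. Strict convexity of $-\log\det$ from Lemma~\ref{lemma:logdet} therefore transfers directly to $q$. The extension by $+\infty$ outside keeps $q$ convex, in the extended-value sense, because the domain is convex. For the minimizer, the closed form gives $q(S)=-\log(1-2|S|^2)\ge 0$, with equality iff $S=0$. Alternatively, one can argue by symmetry: $\partial q/\partial S$ is the traceless part of $-(S+I/2)^{-1}$, which vanishes at $S=0$, and strict convexity then gives uniqueness.

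For item 2, I would use $\det(TST^t+I/2)=\det\big(T(S+I/2)T^t\big)=\det(S+I/2)$, since $\det T^2=1$. Also, $TST^t$ is symmetric traceless with the same eigenvalues as $S$, so admissibility is preserved. This covers the $+\infty$ region as well.

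For item 3, the closed form shows that as $\lambda\to(1/2)^-$ we have $1-4\lambda^2\to0^+$, so $q\to+\infty$. This step is immediate.

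The only point needing care is the convexity of the extended-value function across the boundary of the domain. It is handled by convexity of the domain, so I do not expect a real obstacle. The main work is simply verifying the eigenvalue reduction $\det(S+I/2)=1/4-\lambda^2$.
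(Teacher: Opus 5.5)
Your proof is correct and follows essentially the same route as the paper: convexity comes from Lemma~\ref{lemma:logdet}, the eigenvalue computation $\det(S+I/2)=(1/2+\lambda)(1/2-\lambda)$ gives both the unique minimizer and the barrier limit, and $\det(TAT^t)=\det A$ gives rotational invariance. Your closed form $q(S)=-\log(1-2|S|^2)$ and your remark that the convex domain keeps the $+\infty$-extended function convex are small refinements of the same argument.
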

\begin{proof}
  The convexity follows directly from Lemma \ref{lemma:logdet}. Denote by $\pm \lambda$ two eigenvalues of $S$. Since 
  \begin{equation}\nonumber
    \det \Big(S+\frac{I}{2}\Big)=\Big(\frac{1}{2}+\lambda\Big)\Big(\frac{1}{2}-\lambda\Big)\le \frac{1}{4},
  \end{equation}
  where the equality holds only when $\lambda=0$, the unique minimizer is $S=0$.

  The rotational invariance is recognized by the equality $\det(TAT^t)=\det T\det A\det T^t=\det A$.

  The third property is deduced by taking the limit $\lambda\to (1/2)^{-}$ in the equality above. 
\end{proof}

We then turn to the function \eqref{quasi-entropy}. 
Notice that the tensor $Q$ defined in this paper is proportional to the concentration $c$.
One fine consequence of this definition is that the interaction energy consists of quadratic terms. 
A seemingly drawback is that in $\qeN $ we have to deal with the term $q(Q/c)$ which may involve singular variables. 
But it turns out that defining $Q$ in this way actually leads to convexity of $\qeN $ that is crucial for the gradient flow and its numerical methods. 

Let us first clarify the domain of $\qeN $.
It is easy to see that $\qeN $ is well-defined and takes finite value in the set 
\begin{equation}
  \cQdomM=\{(c,Q)\in \cQdom | c\le M\}. 
\end{equation}
\begin{theorem}
  $\cQdomM$ is a bounded convex set. 
\end{theorem}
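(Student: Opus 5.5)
The plan is to recast $\cQdomM$ through a linear change of variables under which positive definiteness is visible. For symmetric traceless $Q$, the condition $\lambda(Q)\in(-c/2,c/2)$ is equivalent to $Q+\frac{c}{2}I$ being positive definite. Indeed, the eigenvalues of $Q$ are $\pm\lambda$, so those of $Q+\frac c2 I$ are $\frac c2\pm\lambda$, and both are positive exactly when $|\lambda|<c/2$. This also forces $c>0$. Hence
\begin{equation*}
  \cQdomM=\{(0,0)\}\cup\Big\{(c,Q)\Big|\, Q=Q^t,\ \mathrm{tr}Q=0,\ c\le M,\ Q+\tfrac{c}{2}I\succ 0\Big\}.
\end{equation*}

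\textbf{Boundedness.} For $(c,Q)$ in the second set we have $0<c\le M$ and $|\lambda|<c/2\le M/2$. Therefore $|Q|^2=2\lambda^2< M^2/2$, and the whole set lies in a ball of radius about $M$. The point $(0,0)$ is trivially included in such a ball.

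\textbf{Convexity.} Take $(c_1,Q_1),(c_2,Q_2)\in\cQdomM$ and $t\in[0,1]$, and set $(c,Q)=t(c_1,Q_1)+(1-t)(c_2,Q_2)$. Symmetry, tracelessness and $c\le M$ are clearly preserved under convex combinations. I would then split into cases.
\begin{itemize}
  \item If both points are $(0,0)$, the combination is $(0,0)$.
  \item If one point, say the second, is $(0,0)$ and $t\in(0,1]$, then $(c,Q)=t(c_1,Q_1)$ and $Q+\frac c2I=t(Q_1+\frac{c_1}{2}I)\succ0$; if $t=0$ the combination is $(0,0)$.
  \item If both $c_i>0$, then $Q+\frac c2 I=t(Q_1+\frac{c_1}2I)+(1-t)(Q_2+\frac{c_2}2I)$ is a convex combination of positive definite matrices. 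It is therefore positive definite, since $x^t(\cdot)x>0$ for $x\neq0$.
\end{itemize}
In every case the combination lies in $\cQdomM$, which gives convexity.

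The only delicate point, which I expect to be the main obstacle, is the degenerate point $(0,0)$, where the defining condition is not expressed by positive definiteness. It is handled by the scaling observation in the second case above: the segment from $(0,0)$ to a point with $c_1>0$ consists of positive multiples of that point, apart from $(0,0)$ itself. Along this segment $c$ stays in $(0,M]$ and $Q+\frac c2I\succ0$ because the positive cone is invariant under positive scaling. Equivalently, $\cQdomM$ is the intersection of the cone $\{Q+\frac c2I\succ0\}\cup\{0\}$ (a convex cone) with the half-space $c\le M$, and it is therefore convex.
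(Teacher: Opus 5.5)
Your proof is correct and follows essentially the paper's route: the same split into the degenerate point $(0,0)$ versus $c_1,c_2>0$, with the key fact that a positive combination of positive definite matrices is positive definite. Writing the constraint homogeneously as $Q+\frac{c}{2}I\succ 0$ simply absorbs the paper's weights $c_i/(c_1+c_2)$, and treating general $t\in[0,1]$ is slightly more complete than the paper's check of midpoints only.
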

\begin{proof}
  The boundedness follows from the fact that each component in a symmetric matrix is controlled by its maximum and minimum eigenvalues. 

  For the convexity, choose two different elements $(c_1,Q_1),(c_2,Q_2)\in\cQdom $ with $c_1\le c_2$. 
  If $c_1=0$, we have $((c_1+c_2)/2,(Q_1+Q_2)/2)=(c_2/2,Q_2/2)$ and $\lambda(Q_2/c_2)\in (-1/2,1/2)$, so that $(c_2/2,Q_2/2)\in\cQdomM$. 
  If $c_1>0$, it holds that $Q_1/c_1+I/2$ and $Q_2/c_2+I/2$ are positive definite. Thus, their linear combination with positive coefficients, 
  \begin{equation*}
    {Q_1+Q_2\over c_1+c_2}+{I\over2}={c_1\over c_1+c_2}\left({Q_1\over c_1}+{I\over2}\right)+{c_2\over c_1+c_2}\left({Q_2\over c_2}+{I\over2}\right),
  \end{equation*}
  is also positive definite, yielding $({c_1+c_2\over2},{Q_1+Q_2\over2})\in\cQdomM $.
\end{proof}

It is worth noting that $\cQdomM$ is neither open nor closed.
A related consequence is that $\qeN $ is NOT continuous at $(c,Q)=(0,0)$. 
This can be recognized readily by examining
$$
Q=\mathrm{diag}\bigg(\frac{c}{2}-c\exp\Big(-\frac{1}{c^2}\Big),-\frac{c}{2}+c\exp\Big(-\frac{1}{c^2}\Big)\bigg). 
$$
When $c\to 0^+$, we have $\qeN (c,Q)\to +\infty$ but $\qeN (0,0)=0$. 
Nevertheless, we find that $\qeN $ still possesses several properties necessary in the analysis. 
To be prepared for our discussions afterwards, we introduce the interior and the closure of $\cQdomM$, given by 
\begin{align*}
  &(\cQdomM)^{\circ}=\Big \{(c,Q)\Big| 0 < c < M,\, Q=Q^t,\, \mathrm{tr}Q=0,\, \lambda(Q)\in\Big(-{c\over2},{c\over2}\Big)\Big \}.\\
  &\overline{\cQdomM}=\Big \{(c,Q)\Big|0\le c\le M,\, Q=Q^t,\, \mathrm{tr}Q=0,\, \lambda(Q)\in\Big[-{c\over2},{c\over2}\Big]\Big \}.
\end{align*}
Similar to the function $q$, in $\overline{\cQdomM}$ we let $\qeN $ take $+\infty$ when $|\lambda(Q)|=c/2>0$. 
Moreover, its partial derivatives,
\begin{equation*}
  \begin{aligned}
     & \frac{\partial\qeN}{\partial c}=\log\frac{Mc}{M-c}-\nu\left(\log\det\Big(\frac{Q}{c}+\frac{I}{2}\Big)-\log\det\Big(\frac{I}{2}\Big)\right)+\frac{\nu}{c}{\left(\frac{Q}{c}+\frac{I}{2}\right)}_{ij}^{-1}Q_{ij}, \\
     & \frac{\partial\qeN}{\partial Q_{ij}}=-\nu{\left(\frac{Q}{c}+\frac{I}{2}\right)}_{ij}^{-1},
  \end{aligned}
\end{equation*}
are only defined for $(c,Q)\in(\cQdomM)^{\circ} $.

\begin{theorem}\label{qe_properties}
  The quasi-entropy $\qeN (c,Q)$ has the following properties:
  \begin{enumerate}
  \item It is invariant under rotations: for any $2 \times 2$ matrix $T$ satisfying $TT^t=I$, it holds $\qeN(c,TQT^t)=\qeN(c,Q)$. 
  \item It is strictly convex on $\cQdomM$ with a finite lower bound. 
  \item It is lower semicontinuous on $\overline{\cQdomM}$. 
  \end{enumerate}
\end{theorem}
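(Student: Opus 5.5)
Rotational invariance is immediate. Since $T(Q/c)T^t=(TQT^t)/c$ and $TT^t=I$ implies $T^tT=I$, the first two terms of $\qeN$ do not involve $Q$, and the claim follows from the second item of Proposition \ref{prop:qeQ}. At $(0,0)$ the claim is trivial because $T0T^t=0$.

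For strict convexity, I will split $\qeN$ into $h(c)=c\log c+(M-c)\log(1-c/M)$ and the perspective term $p(c,Q)=c\,\nu q(Q/c)$, with $p(0,0)=0$.

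\emph{The term $h$.} We have $h''(c)=1/c+1/(M-c)>0$ on $(0,M)$, and $h$ is continuous on $[0,M]$. Hence $h$ is strictly convex in $c$ on $[0,M]$.

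\emph{The term $p$.} This is the perspective of the convex function $\nu q$. For $c_1,c_2>0$, $\theta\in(0,1)$ and $c=\theta c_1+(1-\theta)c_2$, the weights $\alpha=\theta c_1/c$ and $1-\alpha$ satisfy
\begin{equation*}
\frac{\theta Q_1+(1-\theta)Q_2}{c}=\alpha\frac{Q_1}{c_1}+(1-\alpha)\frac{Q_2}{c_2}.
\end{equation*}
Convexity of $q$ (Proposition \ref{prop:qeQ}) then gives
\begin{equation*}
p\big(\theta(c_1,Q_1)+(1-\theta)(c_2,Q_2)\big)\le\theta p(c_1,Q_1)+(1-\theta)p(c_2,Q_2).
\end{equation*}
If $c_1=0$, then $Q_1=0$, and the combination is $(\ (1-\theta)c_2,(1-\theta)Q_2\ )$. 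Its $p$-value is exactly $(1-\theta)p(c_2,Q_2)$, so the inequality holds with equality.

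\emph{Strictness.} Take two distinct points. If $c_1\ne c_2$, strictness comes from $h$. If $c_1=c_2=c>0$ and $Q_1\ne Q_2$, strictness comes from the strict convexity of $q$ in Lemma \ref{lemma:logdet}, since $p=c\nu q(\cdot/c)$ is then $c$ times a strictly convex function. The case $c_1=c_2=0$ cannot give distinct points.

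\emph{Lower bound.} Proposition \ref{prop:qeQ} gives $q\ge q(0)=0$, so $p\ge 0$. Also $h$ is continuous on the compact interval $[0,M]$, hence bounded below. Therefore $\qeN\ge\min_{[0,M]}h>-\infty$.

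For lower semicontinuity on $\overline{\cQdomM}$, take $(c_n,Q_n)\to(c,Q)$. There are three cases.
\begin{itemize}
\item[(a)] If $(c,Q)\in(\cQdomM)^\circ$, or more generally $c>0$ with $|\lambda(Q)|<c/2$, then $\qeN$ is continuous there, because $\det(Q/c+I/2)$ is continuous and positive at the limit.
\item[(b)] If $c>0$ and $|\lambda(Q)|=c/2$, the value is $+\infty$. We have $\det(Q_n/c_n+I/2)=\tfrac14-\lambda_n^2/c_n^2\to0^+$, where $\pm\lambda_n$ are the eigenvalues of $Q_n$ (points with value $+\infty$ need no argument). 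So $q\to+\infty$ by the barrier property in Proposition \ref{prop:qeQ}. Since $c_n\to c>0$ and $h$ is bounded, $\qeN\to+\infty$.
\item[(c)] If $c=0$, then $Q=0$ and $\qeN(0,0)=h(0)=0$. Using $p\ge0$, we get $\liminf\qeN(c_n,Q_n)\ge\liminf h(c_n)=h(0)=0$.
\end{itemize}
Case (c) is exactly where continuity fails, as the paper's example shows. The nonnegativity of $q$, from its minimizer at $0$, is the key point that rescues lower semicontinuity there.

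The main obstacle is handling the boundary $c=0$ correctly in both the convexity and the semicontinuity arguments. The perspective computation also requires care with the weights $\alpha$, and strictness must be checked across the mixed cases.
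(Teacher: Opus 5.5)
Your proposal is correct and follows essentially the same route as the paper. You split off the strictly convex $c\log c+(M-c)\log(1-c/M)$, prove convexity of the perspective term $c\nu q(Q/c)$ by reweighting (the case $c_1=0$ giving equality), use $q\ge q(0)=0$ for both the lower bound and lower semicontinuity at $(0,0)$, and use the barrier property when $|\lambda(Q)|=c/2$. The differences are cosmetic: you work with a general weight $\theta$ instead of the midpoint, and you get strictness by a case split ($c_1\ne c_2$ versus $c_1=c_2>0$) rather than the paper's equality-forcing argument.
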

\begin{proof}
  The rotation invariance follows the same argument as the proof of Proposition \ref{prop:qeQ}. 

  The finite lower bound follows from the fact that $q$ has a finite lower bound. 

  For the strict convexity of $\qeN(c,Q)$, notice that $c\log c+(M-c)\log(1-c/M)$ is strictly convex w.r.t. $c$.
  We show that $\phi(c,Q)=c\nu q(Q/c)$ is convex w.r.t. $(c,Q)$. 
  Let $(c_1,Q_1),(c_2,Q_2)\in\cQdomM$.
  For $c_1=0$, $c_2>0$, we calculate directly that 
  \begin{align*}
    \frac{1}{2}\big(\phi(c_1,Q_1)+\phi(c_2,Q_2)\big)=\frac{1}{2} \phi(c_2,Q_2)
    =\phi\Big(\frac{c_2}{2},\frac{Q_2}{2}\Big)
    =\phi\Big(\frac{c_1+c_2}{2},\frac{Q_1+Q_2}{2}\Big). 
  \end{align*}
  If $c_1,c_2>0$, we deduce from the strict convexity of $q$ that 
  \begin{equation*}
    \begin{aligned}
      \phi\Big(\frac{c_1+c_2}{2},\frac{Q_1+Q_2}{2}\Big) =
      &\, {c_1+c_2\over2}\nu q\Big({Q_1+Q_2\over c_1+c_2}\Big)  \\
      =    &\, {c_1+c_2\over2}\nu q\Big({c_1\over c_1+c_2}\cdot{Q_1\over c_1}+{c_2\over c_1+c_2}\cdot{Q_2\over c_2}\Big)        \\
      \leq &\, {c_1+c_2\over2}\Bigg({c_1\over c_1+c_2}\nu q\Big({Q_1\over c_1}\Big)+{c_2\over c_1+c_2}\nu q\Big({Q_2\over c_2}\Big)\Bigg) \\
      =   &\,  {c_1\over2}\nu q\Big({Q_1\over c_1}\Big)+{c_2\over2}\nu q\Big({Q_2\over c_2}\Big)\\
      = & \, \frac{1}{2}\big(\phi(c_1,Q_1)+\phi(c_2,Q_2)\big),
    \end{aligned}
  \end{equation*}
  where the equality holds only when $Q_1/c_1=Q_2/c_2$.
  If $\qeN (c_1,Q_1)+\qeN (c_2,Q_2)=2\qeN ((c_1+c_2)/2,(Q_1+Q_2)/2)$, the term $c\log c+(M-c)\log(1-c/M)$ implies that $c_1=c_2$, so that $Q_1=Q_2$.
  Thus, the strict convexity of $\qeN $ is established. 

  To show the lower semicontinuity of $\qeN(c,Q)$, let $(c^{(k)},Q^{(k)})\to (\hat{c},\hat{Q})\in \overline{\cQdomM}$.
  Since $\qeN(c,Q)$ is continuous in $\cQdomM\backslash (0,0)$,
  we only need to consider two cases:
  \begin{enumerate}
  \item $(\hat{c},\hat{Q})=(0,0)$;
  \item $\hat{c}>0$, $|\lambda(\hat{Q})|=\hat{c}/2$, i.e. $\qeN(\hat{c},\hat{Q})=+\infty$. 
  \end{enumerate}
  For the former case, we deduce from the lower-boundedness of $q(S)\ge q(0)$ that
  $$
  \liminf_{k\to +\infty}c^{(k)}\nu q(Q^{(k)}/c^{(k)})\ge \liminf_{k\to +\infty} c^{(k)}\nu q(0)=0. 
  $$
  Together with the continuity of $c\log c+(M-c)\log(1-c/M)$ on $[0,M]$, we arrive at
  $$
  \liminf_{k\to +\infty}\qeN (c^{(k)},Q^{(k)})\ge 0=\qeN (0,0). 
  $$
  For the latter case, we have $\lim_{k\to +\infty}|\lambda(Q^{(k)}/c^{(k)})|=1/2$.
  Hence, by the third property in Proposition \ref{prop:qeQ},
  $$
  \lim_{k\to +\infty}c^{(k)}\nu q(Q^{(k)}/c^{(k)})=+\infty. 
  $$
  Notice again the continuity of $c\log c+(M-c)\log(1-c/M)$ on $[0,M]$ to conclude the proof. 
\end{proof}

\begin{corollary}\label{cor-q}
  For the quasi-entropy $\qeN(c,Q)$, it holds
  \begin{align}
    \nonumber
    & {\left(\frac{\partial\qeN(c_1,Q_1)}{\partial c_1}-\frac{\partial\qeN(c_2,Q_2)}{\partial c_2}\right)}(c_1-c_2)+{\left(\frac{\partial\qeN(c_1,Q_1)}{\partial Q_1}-\frac{\partial\qeN(c_2,Q_2)}{\partial Q_2}\right)}\cdot (Q_1-Q_2)\geq0, \\
    \nonumber
    & \frac{\partial\qeN(c_1,Q_1)}{\partial c_1}(c_1-c_2)+\frac{\partial\qeN(c_1,Q_1)}{\partial Q_1}\cdot (Q_1-Q_2)\geq\qeN(c_1,Q_1)-\qeN(c_2,Q_2),
  \end{align}
  where $(c_1,Q_1),(c_2,Q_2)\in (\cQdomM)^{\circ}$, $c_1,c_2>0$. The equalities hold only when $c_1=c_2$ and $Q_1=Q_2$.
\end{corollary}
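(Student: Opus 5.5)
The plan is to derive both inequalities from the strict convexity of $\qeN$ on $\cQdomM$ (Theorem~\ref{qe_properties}), restricted to the open convex set $(\cQdomM)^{\circ}$, where $\qeN$ is differentiable.

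\textbf{Step 1: restrict to a segment.} Fix $(c_1,Q_1),(c_2,Q_2)\in(\cQdomM)^{\circ}$ and set
\begin{equation*}
  g(t)=\qeN\big(c_2+t(c_1-c_2),\,Q_2+t(Q_1-Q_2)\big),\qquad t\in[0,1].
\end{equation*}
The interior $(\cQdomM)^{\circ}$ is convex. Indeed, $0<c<M$ is preserved under convex combinations, and the positive-definiteness argument used in the proof that $\cQdomM$ is convex carries over verbatim. So the whole segment lies in $(\cQdomM)^{\circ}$.

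On this set $\qeN$ is smooth, since $c>0$ and $Q/c+I/2$ is positive definite. Hence $g$ is $C^1$ on $[0,1]$, and by the chain rule
\begin{equation*}
  g'(t)=\frac{\partial\qeN}{\partial c}(c_1-c_2)+\frac{\partial\qeN}{\partial Q}\cdot(Q_1-Q_2),
\end{equation*}
evaluated at the point with parameter $t$. One small point needs care: $\partial\qeN/\partial Q_{ij}$ is taken as a derivative on symmetric traceless tensors. However, $Q_1-Q_2$ is itself symmetric traceless, so the formula for $\partial\qeN/\partial Q_{ij}$ listed in the paper pairs correctly with it. Any component of the gradient along $I$ has zero dot product with $Q_1-Q_2$.

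\textbf{Step 2: strict convexity of $g$.} By Theorem~\ref{qe_properties}, $\qeN$ is strictly convex, and we assume $(c_1,Q_1)\ne(c_2,Q_2)$. Then $g$ is strictly convex on $[0,1]$. The theorem is stated as midpoint strict convexity, but that suffices: $\qeN$ is continuous on the interior, and midpoint convexity plus continuity gives full convexity. Strictness also transfers to general $t$. If $g$ were affine on some subinterval, midpoint equality would hold for two distinct points, contradicting the equality case established in the proof.

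\textbf{Step 3: read off the inequalities.} For a strictly convex $C^1$ function on $[0,1]$, the derivative $g'$ is strictly increasing. Therefore $g'(1)>g'(0)$, which is exactly the first inequality with strict sign. Likewise the tangent-line inequality $g(0)>g(1)+g'(1)(0-1)$ holds strictly, i.e.
\begin{equation*}
  g'(1)>g(1)-g(0)=\qeN(c_1,Q_1)-\qeN(c_2,Q_2),
\end{equation*}
which is the second inequality. When $(c_1,Q_1)=(c_2,Q_2)$, both sides of each inequality vanish, so equality holds only in that case.

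\textbf{Main obstacle.} The only delicate point is Step 2: passing from the midpoint strict convexity stated in Theorem~\ref{qe_properties} to strict monotonicity of $g'$. I would handle it by first using continuity to get full convexity, then noting that non-strict monotonicity of $g'$ would make $g$ affine on an interval, which is excluded as above. An alternative route is to compute the Hessian of $c\,q(Q/c)$ directly as a perspective function, but it is only positive semidefinite. Strictness would then have to come from the $c$-part $\log(Mc/(M-c))$, whose derivative is positive on $(0,M)$, combined with the equality case of $q$'s strict convexity.
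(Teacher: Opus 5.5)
Your proposal is correct and follows essentially the same route as the paper: restrict $\qeN$ to the segment between the two points, use strict convexity of the resulting one-variable function, and read off $g'(1)>g'(0)$ and $g'(1)>g(1)-g(0)$ via the chain rule. The details you add — convexity and openness of $(\cQdomM)^{\circ}$, upgrading the midpoint strict convexity of Theorem~\ref{qe_properties} to strict convexity of $g$ via continuity, and pairing $\partial\qeN/\partial Q$ with the symmetric traceless increment $Q_1-Q_2$ — are points the paper leaves implicit.
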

\begin{proof}
  Define
  \begin{equation*}
    h(s)=\qeN\left(s(c_1,Q_1)+(1-s)(c_2,Q_2)\right).
  \end{equation*}
  When $(c_1,Q_1)\ne (c_2,Q_2)$, it follows that $h(t)$ is strictly convex since $\qeN$ is strictly convex w.r.t. $(c,Q)$.
  The two inequalities are deduced from $h'(1)> h'(0)$ and $h'(1)> h(1)-h(0)$, respectively, by the chain rule. 
\end{proof}

For convenience, we denote the bulk energy within the integral w.r.t. $\md\mr$ as 
\begin{equation}\label{bulk}
  g_b(c,Q)=\qeN(c,Q)+c_{00}c^2+c_{02}\left|Q\right|^2.
\end{equation}
Summarizing (\ref{bulk}) and (\ref{elastic}), the total free energy is given by 
\begin{equation}\label{E}
  E[c,Q]=\int_{\Omega}g_b(c,Q)+g_e(c,Q)\md\mr.
\end{equation}
In the above, a positive coefficient $\nu$ is introduced.
When the anisotropic part $q(Q/c)$ dominates in the entropy term, the choice of $\nu$ is actually not quite significant. 
This is because, after discarding the terms that depend only on \(c\), a
rescaling by \(c/c_0\) shows that the effective parameter is \(c_0/\nu\).

\subsection{Gradient Flow}

In the gradient flow, it is necessary to ensure that $c$ satisfies mass conservation and that $Q$ is symmetric.
To this end, we introduce projection operators below, 
\begin{align*}
  \scrG A=A-{1\over {V(\Omega)}}\int_\Omega A\mathrm{d}\mathbf{r},
  {(\scrP Q)}_{ij}={1\over2}(Q_{ij}+Q_{ji})-{1\over2}\delta_{ij}Q_{kk}. 
\end{align*}
For $\psi(\mr)$ satisfying $\int_{\Omega}\psi\,\md\mr=0$, it holds 
\begin{equation*}
  (\scrG \phi,\psi)=(\phi,\psi).
\end{equation*}
For any symmetric traceless tensor $S$, it holds 
\begin{equation*}
  Q\cdot S=\scrP Q\cdot S.
\end{equation*}

Following the idea to make the dissipation operator simple, we consider the gradient flow driven by the variational derivatives on which the above projection operators are imposed, 
\begin{align}
  &\frac{\partial c}{\partial t}=-\scrG \frac{\delta E}{\delta c},\label{gf_c}\\
   &\frac{\partial Q}{\partial t}=-\scrP \frac{\delta E}{\delta Q}.\label{gf_Q}
\end{align}
The gradient flow above needs to be supplemented by suitable boundary conditions. 
We shall consider Dirichlet type boundary conditions, giving prescribed values of $c$ and $Q$ on $\partial\Omega$, 
\begin{align}
  c=c_{\mathrm{bnd}},\quad Q=Q_{\mathrm{bnd}},\quad \text{on } \partial\Omega. \label{Dirichlet_bnd}
\end{align}
They shall be given such that there exists some $(\underline{c},\underline{Q})$ satisfying
\begin{align}
  \int_{\Omega}\underline{c}\md\mr=c_0,\quad (\underline{c},\underline{Q})\text{ a.e. in }\Omega,\quad E[\underline{c},\underline{Q}]<+\infty,\quad 
  (\underline{c},\underline{Q})=(c_{\mathrm{bnd}},Q_{\mathrm{bnd}}) \text{ on } \partial\Omega.\label{cond_bnd}
\end{align}
Moreover, the $c_{40}$ term in the free energy \eqref{E} requires additional boundary conditions on the normal derivatives of $c$.
If we adopt Dirichlet type boundary conditions, the normal derivative $\partial_{\bfn}c$ is also needed, where $\bfn =(n_1,n_2)$ denotes the outward unit normal vector. 
However, it turns out to be not easy to give reasonable values of $\partial_{\bfn}c$. 
So, we instead propose an evolution equation for $\partial_{\bfn} c$. 
The equation shall be able to maintain the energy dissipation dictated by the gradient flow. 
To find out the equation of $\partial_{\bfn}c$, let us revisit the calculations of the variational derivatives, 
\begin{align}
    \delta E= & \int_{\Omega}\left(\frac{\partial\qeN}{\partial c}+2c_{00}c\right)\delta c +2c_{20}\partial_ic\partial_i\delta c+c_{23}\partial_iQ_{ij}\partial_j\delta c+2c_{40}\partial_{ii}c\partial_{jj}\delta c \nonumber\\
    & \quad+\left(\frac{\partial\qeN}{\partial Q_{ij}}+2c_{02}Q_{ij}\right)\delta Q_{ij} +2c_{21}\partial_k Q_{ij}\partial_k \delta Q_{ij}+2c_{22}\partial_kQ_{kj}\partial_i\delta Q_{ij}+c_{23}\partial_ic \partial_j\delta Q_{ij}\md\mr \nonumber\\
    =& \int_{\Omega}\left(\frac{\partial\qeN}{\partial c}+2c_{00}c-2c_{20}\Delta c-c_{23}\partial_{ij}Q_{ij}+2c_{40}\Delta^2c\right)\delta c                                         \nonumber\\
              & \quad+\left(\frac{\partial\qeN}{\partial Q_{ij}}+2c_{02}Q_{ij}-2c_{21}\Delta Q_{ij}-2c_{22}\partial_{ik}Q_{kj}-c_{23}\partial_{ij}c\right)\delta Q_{ij} \md\mr                             \nonumber\\
              & +\int_{\partial \Omega}\big(2c_{20}\partial_{\bfn }c+c_{23}\partial_{i}Q_{ij}n_j-2c_{40}\partial_{\bfn }\Delta c\big)\delta c+2c_{40}\Delta c \delta ( \partial_\bfn c) \nonumber\\
              & \qquad +\big(2c_{21}\partial_{\bfn }Q_{ij}+2c_{22}\partial_k Q_{kj}n_i+c_{23}\partial_j cn_i\big)\delta Q_{ij}\md S. \label{delta_E}
\end{align}
Thus, the variational derivatives are given by 
\begin{align}
  &\frac{\delta E}{\delta c}=\frac{\partial\qeN}{\partial c}+2c_{00}c-2c_{20}\Delta c-c_{23}\partial_{ij}Q_{ij}+2c_{40}\Delta^2c,\label{var_c}\\
  &\frac{\delta E}{\delta Q_{ij}}=\frac{\partial\qeN}{\partial Q_{ij}}+2c_{02}Q_{ij}-2c_{21}\Delta Q_{ij}-2c_{22}\partial_{ik}Q_{kj}-c_{23}\partial_{ij}c. \label{var_Q}
\end{align}
On the boundary, both $\delta c$ and $\delta Q$ are zero because of the Dirichlet boundary conditions.
For $\partial_{\bfn}c$, the evolution equation is written down following the similar structure for $c$ and $Q$ in $\Omega$, that is, 
\begin{equation}\label{bndeq}
  \frac{\partial (\partial_{\bfn } c)}{\partial t}=-\Delta c,\quad \text{on }\partial\Omega.
\end{equation}
The gradient flow system is given by \eqref{gf_c}--\eqref{bndeq}.

\begin{theorem}
  The gradient flow system \eqref{gf_c}--\eqref{Dirichlet_bnd}, \eqref{var_c}--\eqref{bndeq} follows the energy dissipation law
  \begin{equation}
    \frac{\md}{\md t}E[c,Q]=-\left(\scrG\frac{\delta E}{\delta c},\scrG\frac{\delta E}{\delta c}\right)-\left(\scrP\frac{\delta E}{\delta Q_{ij}},\scrP\frac{\delta E}{\delta Q_{ij}}\right)-2c_{40}\int_{\partial\Omega}(\Delta c)^2\md S. \nonumber
  \end{equation}
\end{theorem}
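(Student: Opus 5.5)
The plan is to differentiate $E[c,Q]$ in time and reuse the first-variation identity \eqref{delta_E} with $\delta c=\partial_t c$, $\delta Q=\partial_t Q$. We assume the solution is smooth enough, and stays in the interior $(\cQdomM)^{\circ}$ almost everywhere, so that $\partial\qeN/\partial c$ and $\partial\qeN/\partial Q$ exist and the chain rule applies inside the integral. Then
\begin{align*}
\frac{\md}{\md t}E=&\int_\Omega \frac{\delta E}{\delta c}\,\partial_t c+\frac{\delta E}{\delta Q_{ij}}\,\partial_t Q_{ij}\,\md\mr\\
&+\int_{\partial\Omega}\big(2c_{20}\partial_{\bfn}c+c_{23}\partial_iQ_{ij}n_j-2c_{40}\partial_{\bfn}\Delta c\big)\partial_t c+2c_{40}\Delta c\,\partial_t(\partial_{\bfn}c)\\
&\qquad+\big(2c_{21}\partial_{\bfn}Q_{ij}+2c_{22}\partial_kQ_{kj}n_i+c_{23}\partial_jc\,n_i\big)\partial_t Q_{ij}\,\md S .
\end{align*}
This is exactly the integration by parts already carried out in \eqref{delta_E}. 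The only point to stress is that $\partial_t(\partial_{\bfn}c)$ is kept as an independent boundary quantity and not set to zero.

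Next we treat the boundary terms. Since $c=c_{\mathrm{bnd}}$ and $Q=Q_{\mathrm{bnd}}$ on $\partial\Omega$ are time independent by \eqref{Dirichlet_bnd}, we have $\partial_t c=0$ and $\partial_t Q=0$ on $\partial\Omega$. Hence every boundary term vanishes except $2c_{40}\int_{\partial\Omega}\Delta c\,\partial_t(\partial_{\bfn}c)\,\md S$. Substituting \eqref{bndeq} turns this into $-2c_{40}\int_{\partial\Omega}(\Delta c)^2\md S$.

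For the bulk terms we use the projection identities. Mass conservation \eqref{avc} gives $\int_\Omega\partial_t c\,\md\mr=0$; this also follows directly from \eqref{gf_c}, because $\scrG$ yields mean-zero functions. Therefore $(\delta E/\delta c,\partial_t c)=(\scrG\,\delta E/\delta c,\partial_t c)=-(\scrG\,\delta E/\delta c,\scrG\,\delta E/\delta c)$. Similarly, $\partial_t Q=-\scrP\,\delta E/\delta Q$ is symmetric traceless, so $\frac{\delta E}{\delta Q}\cdot\partial_tQ=\scrP\frac{\delta E}{\delta Q}\cdot\partial_tQ=-|\scrP\,\delta E/\delta Q|^2$. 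Summing the three contributions gives the stated law.

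The main obstacle is justifying the first step rigorously. The quasi-entropy is convex but only lower semicontinuous, and its derivatives exist only in $(\cQdomM)^{\circ}$. Differentiation under the integral therefore needs the solution to stay away from the boundary of $\cQdomM$, with enough regularity for the boundary traces of $\Delta c$ and $\partial_{\bfn}\Delta c$. I would state the result for classical solutions taking values in $(\cQdomM)^{\circ}$, in which case the computation above is purely formal-calculus and complete.
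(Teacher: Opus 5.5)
Your proof is correct and follows the same route as the paper's. Like the paper, you reuse the variation \eqref{delta_E} with $\delta\phi$ replaced by $\partial_t\phi$, use $\partial_t c=\partial_t Q=0$ on $\partial\Omega$ to remove all boundary terms except $2c_{40}\int_{\partial\Omega}\Delta c\,\partial_t(\partial_{\bfn}c)\,\md S$, and then substitute \eqref{gf_c}, \eqref{gf_Q}, \eqref{bndeq}. You also make explicit the projection identities for $\scrG$ and $\scrP$ and the regularity/interior assumptions that the paper's formal computation leaves implicit.
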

\begin{proof}

  The derivation follows the same route as calculating the energy variation $\delta E$, just to replace $\delta \phi$ by $\partial \phi/\partial t$ for $\phi$ taking $E$, $c$, $Q$, and $\partial_{\bfn}c$.
  Notice that the Dirichlet boundary conditions \eqref{Dirichlet_bnd} imply that $\partial c/\partial t$, $\partial Q/\partial t$ are zero on $\partial\Omega$. 
  Then, substituting the time derivatives according to \eqref{gf_c}, \eqref{gf_Q} and \eqref{bndeq}, the energy dissipation law is established. 
\end{proof}

\begin{remark}
  If the free energy contains a boundary integral on $\partial\Omega$, we can follow the same route to construct a gradient flow involving these terms. 
\end{remark}

The gradient flow system \eqref{gf_c}--\eqref{Dirichlet_bnd}, \eqref{var_c}--\eqref{bndeq} is featured in two aspects: one is the coupled constraints for $c$ and $Q$ given by the domain $\cQdomM$ to ensure the terms involving $\qeN$ make sense, the other is the compliance of the boundary evolution equation \eqref{bndeq} with the energy dissipation law. 
It is crucial for numerical schemes to maintain these properties and structures, which we discuss in the next section.

\section{Numerical Methods}\label{NM}
To construct numerical schemes for the gradient flow system \eqref{gf_c}--\eqref{Dirichlet_bnd}, \eqref{var_c}--\eqref{bndeq}, we discuss time discretizations first, followed by spatial discretizations. 

In smectic liquid crystals, the spatial changing of the concentration is quite drastic.
As a result, it would be more probable for the numerical solutions to go out of $\cQdomM$ with simple, especially linearly implicit schemes.
Actually, we have encountered this problem frequently, which we will report in the following section. 
The emphasis of $(c,Q)\in \cQdomM$ urges us to deal with the derivatives of $\qeN$ implicitly in time. 
Since $\qeN$ is convex, it is then natural to adopt the idea of convex splitting for the time discretization, which also applies to the boundary evolution equations. 
It turns out that the presence of the boundary evolution equations indeed is compatible with the argument for establishing the theoretical results, including the existence and uniqueness in $\cQdomM$, the energy dissipation, as well as the error estimates. 
The convexity and lower semicontinuity of $\qeN$ turn out to play a key role in the derivations. 
For spatial discretizations, we write down a second order finite difference scheme with special care for its consistency with the integration by parts involving high order spatial derivatives and normal derivatives. 
In this way, the theoretical results for the time discretizations also hold for the full discretizations.

\subsection{Time Discretization}
When the normal derivative on the boundary is not prescribed, the convexity can still be recognized, as we will show below. 
\begin{proposition}\label{gamma_convex}
  When $\gamma\ge \big(\min\{c_{20}-{c_{23}^2\over4c_{22}},0\}\big)^2/(4c_{40})$, the functional 
  $$
  \int_{\Omega} g_e(c,Q)+\gamma c^2\md\mr, 
  $$
  with $g_e$ given in \eqref{elastic}, is convex w.r.t. $(c,Q)$ for $c\in H^2(\Omega)$, $c=c_{\mathrm{bnd}}$ on $\partial\Omega$, $Q\in H^1(\Omega)$. 
\end{proposition}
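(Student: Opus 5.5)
Since the functional $F[c,Q]=\int_{\Omega} g_e(c,Q)+\gamma c^2\,\md\mr$ is a quadratic form in $(c,Q)$, its convexity on the affine set $\{c=c_{\mathrm{bnd}}$ on $\partial\Omega\}$ is equivalent to nonnegativity of the quadratic form on differences. Concretely, for two admissible pairs $(c_1,Q_1)$, $(c_2,Q_2)$ set $u=c_1-c_2\in H^2(\Omega)\cap H^1_0(\Omega)$ and $P=Q_1-Q_2\in H^1(\Omega)$; then $\tfrac12(F_1+F_2)-F(\text{midpoint})=\tfrac14 F_0[u,P]$, where $F_0$ denotes the quadratic part of $F$ evaluated at $(u,P)$. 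So I will show $F_0[u,P]\ge 0$. Note that $P$ is not required to vanish on the boundary: the statement only pins $c$, not $Q$. The argument must therefore use only $u=0$ on $\partial\Omega$.

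\emph{Step 1: absorb the cross term.} Complete the square in the $c_{22}$ and $c_{23}$ terms pointwise. Write $v_j=\partial_iP_{ij}$. Then
\begin{equation*}
c_{22}|v|^2+c_{23}\,\partial_ju\,v_j=c_{22}\Big|v+\frac{c_{23}}{2c_{22}}\nabla u\Big|^2-\frac{c_{23}^2}{4c_{22}}|\nabla u|^2 .
\end{equation*}
Because $c_{21},c_{22}>0$, dropping the nonnegative $|\nabla P|^2$ and squared terms gives
\begin{equation*}
F_0\ge\int_\Omega a|\nabla u|^2+c_{40}(\Delta u)^2+\gamma u^2\,\md\mr,\qquad a=c_{20}-\frac{c_{23}^2}{4c_{22}} .
\end{equation*}
This step is pointwise, so no boundary information on $P$ is needed.

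\emph{Step 2: control the possibly negative gradient term.} If $a\ge0$ we are done. Otherwise, use $u=0$ on $\partial\Omega$ to integrate by parts, $\int_\Omega|\nabla u|^2\,\md\mr=-\int_\Omega u\Delta u\,\md\mr$, with no boundary contribution. Since $u\in H^2$ vanishes on $\partial\Omega$, this is exactly where the absence of a condition on $\partial_{\bfn}u$ is harmless. Then
\begin{equation*}
\int_\Omega a|\nabla u|^2\,\md\mr=|a|\int_\Omega u\Delta u\,\md\mr\ge-\int_\Omega c_{40}(\Delta u)^2+\frac{a^2}{4c_{40}}u^2\,\md\mr
\end{equation*}
by Young's inequality. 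Combined with $\gamma\ge a^2/(4c_{40})$, this gives $F_0\ge0$.

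The only delicate point I anticipate is Step 2. I must justify the integration by parts for $u\in H^2\cap H^1_0$, which is standard by density for Lipschitz $\Omega$. I must also make sure that the boundary term $\int_{\partial\Omega}u\,\partial_{\bfn}u\,\md S$ is killed by $u=0$ alone, rather than needing $\partial_{\bfn}u$. This is what makes the result valid even though the normal derivative is not prescribed.
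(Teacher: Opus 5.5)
Your proof is correct and follows the paper's argument: the same pointwise completion of the square (the paper's $\gamma_1$ taken as $c_{23}^2/(4c_{22})$), the same integration by parts turning $|\nabla c|^2$ into $-c\Delta c$, and the same Young-type bound against $c_{40}(\Delta c)^2+\gamma c^2$. The only difference is cosmetic: the paper works with $c$ itself and keeps the boundary term $(c_{20}-\gamma_1)\int_{\partial\Omega}c_{\mathrm{bnd}}\,\partial_{\bfn}c\,\md S$, noting that it is linear in $c$, whereas you pass to differences $u\in H^2\cap H^1_0$ so that this term vanishes outright (you also correctly keep the nonnegative $c_{21}|\nabla P|^2$ term, which the paper's displayed decomposition omits).
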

\begin{proof}
  For $4\gamma_1c_{22}\ge c_{23}^2$, it is easy to verify that 
  \begin{equation}
    c_{22}\partial_jQ_{ij}\partial_kQ_{ik}+c_{23}\partial_ic\partial_jQ_{ij}+\gamma_1\partial_ic\partial_ic\ge 0. \nonumber
  \end{equation}
  Thus, we write 
  \begin{equation}\label{g_e}
    \begin{aligned}
      \int_\Omega g_e(c,Q)+\gamma c^2\md\mr
      = & \int_\Omega c_{22}\partial_jQ_{ij}\partial_kQ_{ik}+c_{23}\partial_ic\partial_jQ_{ij}+\gamma_1\partial_ic\partial_ic
      \\
      & +c_{40}\partial_{ii}c\partial_{jj}c+(c_{20}-\gamma_1)\partial_ic\partial_ic+\gamma c^2\md\mr.
    \end{aligned}
  \end{equation}  
  If $c_{20}\ge \gamma_1$, the proof is done since the functional is quadratic and nonnegative.
  If $c_{20}<\gamma_1$, we utilize 
  \begin{equation}
    \int_{\Omega}\partial_ic\partial_ic\md\mr
    =\int_{\partial\Omega}c\partial_{\bfn}c\md S-\int_{\Omega}c\partial_{ii}c\md\mr, \nonumber
  \end{equation}
  to write the second line as 
  \begin{equation}\label{g_e_part2}
    \int_\Omega c_{40}\partial_{ii}c\partial_{jj}c-(c_{20}-\gamma_1)c\partial_{ii}c+\gamma c^2\md\mr
    +(c_{20}-\gamma_1)\int_{\partial\Omega}c_{\mathrm{bnd}}\cdot\partial_{\bfn }c\md S.
  \end{equation}
  For $4\gamma c_{40}\ge (c_{20}-\gamma_1)^2$, it holds 
  \begin{equation}
    c_{40}\partial_{ii}c\partial_{jj}c-(c_{20}-\gamma_1)c\partial_{ii}c+\gamma c^2\ge 0. \nonumber
  \end{equation}
  We complete the proof noting that the boundary integral is linear w.r.t. $c$. 
\end{proof}

Together with the convexity of the quasi-entropy $\qeN$, the free energy can be expressed as the difference of two convex functionals, 
\begin{equation}\label{splitting}
  E[c,Q]=\int_\Omega \qeN(c,Q)+g_+(c,Q)-g_-(c,Q)\md\mr,
\end{equation}
where
\begin{equation}\label{g^+-}
  \begin{aligned}
    g_+(c,Q) & =g_e(c,Q)+\gamma c^2,\\
    g_-(c,Q) & =(\gamma-c_{00})c^2-c_{02}Q^2,
  \end{aligned}
\end{equation}
and $\gamma\ge c_{00}$ is chosen according to the above lemma. 

Let $(c^n,Q^n)$ denote the numerical solution at $t^n=n\delta t$.
In the spirit described in the beginning of this section, we construct the scheme below, 
\begin{align}
  & \frac{c^{n+1}-c^n}{\delta t}=- \scrG \bigg( \frac{\partial \qeN(c^{n+1},Q^{n+1})}{\partial c^{n+1}}+2(c_{00}-\gamma)c^{n}+2\gamma c^{n+1}-2c_{20}\Delta c^{n+1}                         \nonumber\\
  & \qquad\qquad  -c_{23}\partial_{ij}Q_{ij}^{n+1}+2c_{40}\Delta^2c^{n+1}\bigg) \label{t_discrete_c}\\
  & \frac{Q_{ij}^{n+1}-Q_{ij}^n}{\delta t}=-  \scrP \Bigg( {\left(\frac{\partial \qeN(c^{n+1},Q^{n+1})}{\partial Q^{n+1}}\right)}_{ij}+2c_{02}Q^n_{ij}-2c_{21}\Delta Q_{ij}^{n+1} \nonumber\\
  & \qquad\qquad-2c_{22}\partial_{ik}Q_{kj}^{n+1}-c_{23}\partial_{ij}c^{n+1}\Bigg),\label{t_discrete_Q}\\
  & \frac{\partial_{\bfn }c^{n+1}-\partial_{\bfn }c^n}{\delta t}=-\Delta c^{n+1},\quad \mathrm{on\,\partial \Omega}, \label{t_discrete_bnd}
\end{align}
with the boundary values of $c^{n+1}$ and $Q^{n+1}$ given by \eqref{Dirichlet_bnd}.

In the following, we shall establish the existence and uniqueness, energy dissipation and error estimate for the scheme \eqref{t_discrete_c}--\eqref{t_discrete_bnd}.

\begin{theorem}\label{energylaw}
  Assume that $\partial \Omega$ is smooth. For any $\delta t>0$, the scheme \eqref{t_discrete_c}--\eqref{t_discrete_bnd} has the properties below:
  \begin{enumerate}
  \item It has a unique solution $c^{n+1}\in H^2(\Omega)$, $Q^{n+1}\in H^1(\Omega)$ and $(c^{n+1},Q^{n+1})\in (\cQdomM)^{\circ}$ a.e. in $\Omega $.
  \item It is mass conservative, i.e. $\int_{\Omega}c^{n+1}\md\mr=\int_{\Omega}c^n\md\mr$.
  \item It satisfies the energy dissipation law,
    \begin{align}
      & E[c^{n+1},Q^{n+1}]+\frac{1-(c_{00}-\gamma)\delta t}{\delta t}\|c^{n+1}-c^n\|^2+\frac{1-c_{02}\delta t}{\delta t}(\|Q^{n+1}-Q^n\|^2)   \nonumber\\
      & \quad +\int_\Omega g_+(c^{n+1}-c^n,Q^{n+1}-Q^n)\md\mr+\frac{2c_{40}}{\delta t}\int_{\partial\Omega}(\partial_{\bfn }c^{n+1}-\partial_{\bfn }c^n)^2\md S\leq E[c^{n},Q^{n}]. \label{energy_law}
    \end{align}
  \end{enumerate}
\end{theorem}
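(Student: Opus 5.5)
The plan is to obtain the scheme \eqref{t_discrete_c}--\eqref{t_discrete_bnd} as the Euler--Lagrange system of a strictly convex minimization problem. For given $(c^n,Q^n,\partial_{\bfn}c^n)$, define on the admissible set
\[
\mathcal{A}=\Big\{(c,Q):\ c\in H^2(\Omega),\ Q\in H^1(\Omega),\ Q=Q^t,\ \mathrm{tr}Q=0,\ \textstyle\int_\Omega c\,\md\mr=\int_\Omega c^n\md\mr,\ (c,Q)=(c_{\mathrm{bnd}},Q_{\mathrm{bnd}})\text{ on }\partial\Omega\Big\}
\]
the functional
\begin{align*}
J[c,Q]=&\int_\Omega \qeN(c,Q)+g_+(c,Q)+\frac{1}{2\delta t}|c-c^n|^2+\frac{1}{2\delta t}|Q-Q^n|^2\md\mr\\
&-\int_\Omega 2(\gamma-c_{00})c^nc-2c_{02}Q^n\cdot Q\,\md\mr+\frac{c_{40}}{\delta t}\int_{\partial\Omega}(\partial_{\bfn}c-\partial_{\bfn}c^n)^2\md S .
\end{align*}
The last term is chosen so that its first variation $\frac{2c_{40}}{\delta t}(\partial_{\bfn}c-\partial_{\bfn}c^n)\,\delta(\partial_{\bfn}c)$ combines with the boundary term $2c_{40}\Delta c\,\delta(\partial_{\bfn}c)$ in \eqref{delta_E}. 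Since $\delta(\partial_{\bfn}c)$ is free on $\partial\Omega$, the natural boundary condition is then exactly \eqref{t_discrete_bnd}. The mass constraint produces a Lagrange multiplier, which is precisely the projection $\scrG$, and restricting $Q$ to symmetric traceless tensors produces $\scrP$. Mass conservation (item 2) therefore holds by construction, or alternatively by integrating \eqref{t_discrete_c} and using $\int_\Omega\scrG A\,\md\mr=0$.

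\textbf{Existence and uniqueness.} $J$ is strictly convex on $\mathcal{A}$ by Theorem~\ref{qe_properties}, Proposition~\ref{gamma_convex} and the quadratic terms, so any minimizer is unique. It is also bounded below and coercive in $H^2\times H^1$. Here the $c_{40}|\Delta c|^2$ term, the boundary term and the Dirichlet data give $H^2$ control of $c$ via elliptic regularity on a smooth $\partial\Omega$, and $c_{21}|\nabla Q|^2$ gives $H^1$ control of $Q$. Because $\overline{\cQdomM}$ is bounded, $c$ and $Q$ are bounded in $L^\infty$. Taking a minimizing sequence, we extract weakly convergent subsequences, with strong $L^2$ and a.e.\ convergence. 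Convex quadratic parts are weakly lower semicontinuous. For $\int\qeN$ we use the lower semicontinuity of $\qeN$ on $\overline{\cQdomM}$ (Theorem~\ref{qe_properties}), Fatou's lemma and the lower bound. The set $\mathcal{A}$ is nonempty with $J<\infty$ by the condition \eqref{cond_bnd}, so a minimizer exists.

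\textbf{Interior of the domain (main obstacle).} The hardest step is showing the minimizer lies in $(\cQdomM)^{\circ}$ a.e., so that the derivatives of $\qeN$ exist and the Euler--Lagrange equations hold. Finiteness of $J$ already excludes $|\lambda(Q)|=c/2>0$ on a set of positive measure. It remains to exclude $c=0$ and $c=M$ on positive measure. The first approach is the standard perturbation argument: set $(c_\epsilon,Q_\epsilon)=(1-\epsilon)(c^*,Q^*)+\epsilon(\underline{c},\underline{Q})$, where the comparison state is modified if needed to be strictly interior. Then compare $J$ using convexity together with the singular slopes $\partial_c[c\log c]\to-\infty$ as $c\to0^+$ and $\partial_c[(M-c)\log(1-c/M)]\to+\infty$ as $c\to M^-$. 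The term $c\,\nu q(Q/c)$ is positively 1-homogeneous, so its contribution along the segment stays linearly bounded. If the degenerate set had positive measure, $\frac{\md}{\md\epsilon}J$ at $0^+$ would be $-\infty$, contradicting minimality. The same barrier argument gives the Euler--Lagrange equations by testing with smooth compactly supported variations in the interior set.

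\textbf{Energy law (item 3).} Take the inner product of \eqref{t_discrete_c} with $c^{n+1}-c^n$, whose mean is zero so $\scrG$ drops, and of \eqref{t_discrete_Q} with $Q^{n+1}-Q^n$, which is symmetric traceless so $\scrP$ drops. Integrate by parts as in \eqref{delta_E}; the boundary term $2c_{40}\Delta c^{n+1}(\partial_{\bfn}c^{n+1}-\partial_{\bfn}c^n)$ is rewritten through \eqref{t_discrete_bnd}. Then apply the following.
\begin{itemize}
\item The second inequality of Corollary~\ref{cor-q} for $\qeN$.
\item The identity $2a(a-b)=a^2-b^2+(a-b)^2$ for the implicit quadratic $g_+$ part, using its bilinearity up to the linear boundary contribution.
\item The identity $2b(a-b)=a^2-b^2-(a-b)^2$ for the explicit $g_-$ part.
\end{itemize}
Collecting terms yields \eqref{energy_law}.
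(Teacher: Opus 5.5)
Your treatment of existence, the a.e.\ interior property, mass conservation and the energy law follows the paper's proof essentially step by step. Your $J$ is the paper's $F$, including the penalty $\frac{c_{40}}{\delta t}\int_{\partial\Omega}(\partial_{\bfn}c-\partial_{\bfn}c^n)^2\md S$ that produces \eqref{t_discrete_bnd} as a natural boundary condition. Existence uses a minimizing sequence, the lower semicontinuity of $\qeN$ on $\overline{\cQdomM}$, and Fatou's lemma. The sets $\{c=0\}$ and $\{c=M\}$ are shown to be null by perturbing toward $(\underline{c},\underline{Q})$ and setting the $\xi\log\xi$ slopes there against linear bounds elsewhere. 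The energy law comes from testing with the increments, the second inequality of Corollary~\ref{cor-q}, and the two polarization identities. (Your appeal to weak lower semicontinuity of the convex quadratic part is more accurate than the paper's asserted equality $\lim_k F_1[c^{(k)},Q^{(k)}]=F_1[\bar c,\bar Q]$, which weak convergence does not give.)

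What is missing is uniqueness of the \emph{solution}. Strict convexity of $J$ gives a unique minimizer. Item~1, however, asserts that the scheme \eqref{t_discrete_c}--\eqref{t_discrete_bnd} has a unique solution, and your proposal never excludes a second solution (in $H^2\times H^1$, a.e.\ in $(\cQdomM)^{\circ}$) that does not arise as the minimizer. The paper explicitly flags this point and handles it directly:
\begin{enumerate}
\item For two solutions put $\alpha=c^{n+1,1}-c^{n+1,2}$ and $U=Q^{n+1,1}-Q^{n+1,2}$.
\item Subtract the schemes and test with $\alpha$, $U$, $\partial_{\bfn}\alpha$ to obtain \eqref{RS}.
\item The $\qeN$ terms are pointwise nonnegative by the first (monotonicity) inequality of Corollary~\ref{cor-q}.
\item $\int_\Omega g_+(\alpha,U)\md\mr\ge0$, since $\alpha=0$ on $\partial\Omega$ removes the boundary integral in \eqref{g_e_part2}. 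This forces $\alpha=0$ and $U=0$.
\end{enumerate}

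Alternatively, within your framework, add the argument that every such solution $(c,Q)$ minimizes $J$. For any admissible $(c',Q')$, combine three facts:
\begin{enumerate}
\item The subgradient inequality $\qeN(c',Q')\ge\qeN(c,Q)+\partial_c\qeN(c,Q)(c'-c)+\partial_Q\qeN(c,Q)\cdot(Q'-Q)$, which holds by convexity because $(c,Q)$ is interior a.e.
\item Convexity of the remaining quadratic terms.
\item The weak form of the scheme. There $\int_\Omega(c'-c)\md\mr=0$ removes $\scrG$, and \eqref{t_discrete_bnd} turns the boundary term $-2c_{40}\int_{\partial\Omega}\Delta c\,\partial_{\bfn}(c'-c)\md S$ into the variation of the boundary penalty.
\end{enumerate}
Together these give $J[c',Q']\ge J[c,Q]$, and strict convexity then yields uniqueness. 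Either way, this step has to be stated explicitly.
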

\begin{proof}
  We first discuss the existence, which is established by recognizing the solution to the scheme as the minimizer of a convex functional. 
  For the presence of the boundary normal derivative $\partial_{\bfn}c^{n+1}$, it is necessary to include a boundary integral in such a functional, written as
  \begin{align}
    F[ c^{n+1},Q^{n+1}]=\int_{\Omega}\qeN(c^{n+1},Q^{n+1})\md\mr+F_1[  c^{n+1},Q^{n+1}],\nonumber
  \end{align}
  where
  \begin{align}
    F_1[ & c^{n+1},Q^{n+1}]=\frac{1}{2\delta t}\Big(\|c^{n+1}-c^n\|^2+\|Q^{n+1}-Q^n\|^2
    +2c_{40}\int_{\partial \Omega}(\partial_{\bfn}c^{n+1}-\partial_{\bfn}c^n)^2\md S\Big)    \nonumber\\
    & +\int_{\Omega}g_+(c^{n+1},Q^{n+1})+2(c_{00}-\gamma)c^n\cdot c^{n+1}+2c_{02}Q^n\cdot Q^{n+1} \md\mr.\nonumber
  \end{align}

  Theorem \ref{qe_properties} and Proposition \ref{gamma_convex} guarantee the convexity of $\int_{\Omega}(\qeN+g_+)\md\mr$. 
  So, we can verify that the functional $F[c^{n+1},Q^{n+1}]$ given above is strictly convex because the remaining are squared and linear terms.
  We further show that the functional $F[c^{n+1},Q^{n+1}]$ is bounded from below.
  The proof of Proposition \ref{gamma_convex} (cf. \eqref{g_e_part2}) implies that $\int_{\Omega}g_+(c,Q)\md\mr\ge (c_{20}-\gamma_1)\int_{\partial\Omega}c_{\mathrm{bnd}}\partial_{\bfn}c\md\mr$, which can be controlled by the squared term in the boundary integral. Together with Theorem \ref{qe_properties} giving the lower-boundedness of $\qeN$, and the fact that $c^n$ and $Q^n$ are constrained in the bounded domain $\overline{\cQdomM}$, the lower bound of the functional $F[c^{n+1},Q^{n+1}]$ is recognized. 
 
  We show that there exists a unique minimizer of the functional $F$ under the constraint $\int_\Omega\,c^{n+1}\md\mr=\int_\Omega\,c^n\md\mr$.
  Recall that we have assumed in \eqref{cond_bnd} when specifying the boundary values that there exists some $(\underline{c},\underline{Q})$ such that $E[\underline{c},\underline{Q}]<+\infty $.
  It is easy to show that $F[\underline{c},\underline{Q}]-E[\underline{c},\underline{Q}]\le A(\|\underline{c}\|^2+\|\underline{Q}\|^2+\|c^n\|^2+\|Q^n\|^2)$ for some $A>0$, so that $F[\underline{c},\underline{Q}]<+\infty$. 
  Therefore, we can choose a sequence $(c^{(k)},Q^{(k)})$ satisfying $\int_\Omega\,c^{(k)}\md\mr=\int_\Omega\,c^n\md\mr$ and
  \begin{equation*}
    \lim_{k\rightarrow +\infty} F[c^{(k)},Q^{(k)}]=\inf F[c^{n+1},Q^{n+1}]<+\infty.
  \end{equation*}
  The functional $F$ ensures that $\|Q^{(k)}\|_{H^1}$ and $\|\Delta c^{(k)}\|_{L^2}$ are bounded, with the latter implying $ \|c^{(k)}\|_{H^2}$ bounded using the regularity of Poisson's equation.
  In addition, the fact that $H^1(\Omega)$ is a compact embedding into $L^2(\partial\Omega)$ implies that $\|\partial_{\bfn}c\|_{L^2(\partial\Omega)}$ is controlled by $\|c\|_{H^2(\Omega)}$. 
  Thus, we could choose a subsequence, still denotes as $(c^{(k)},Q^{(k)})$, such that $ \{c^{(k)}\} $ converges $H^2$-weakly to $\bar{c}$, and $ \{Q^{(k)}\} $ converges $H^1$-weakly to $\bar{Q}$, respectively. Then we have
  \begin{equation*}
    \lim_{k\rightarrow+\infty}F_1[c^{(k)},Q^{(k)}]=F_1[\bar{c},\bar{Q}],
  \end{equation*}
  We further choose an a.e. convergent subsequence by the Riesz theorem. 
 Now, it is necessary to use the lower semicontinuity of $\qeN$ on $\overline{\cQdomM}$,  followed by the Fatou's lemma, to arrive at 
  \begin{equation*}
    \int_\Omega\qeN(\bar{c},\bar{Q})\md\mr\leq\int_\Omega\liminf_{k\rightarrow+\infty}\qeN(c^{(k)},Q^{(k)})\md\mr\leq\liminf_{k\rightarrow+\infty}\int_\Omega\qeN(c^{(k)},Q^{(k)})\md\mr.
  \end{equation*}
  Hence, 
  \begin{equation*}
    \begin{aligned}
      F[\bar{c},\bar{Q}]
      \leq\lim_{k\rightarrow+\infty}F_1[c^{(k)},Q^{(k)}]+\liminf_{k\rightarrow+\infty}\int_{\Omega}\qeN(c^{(k)},Q^{(k)})\md\mr
      \leq \inf F[c^{n+1},Q^{n+1}],
    \end{aligned}
  \end{equation*}
  which implies that $(\bar{c},\bar{Q})$ is the minimizer of $F[c^{n+1},Q^{n+1}]$. Its uniqueness could be obtained directly from the strict convexity of $F[c^{n+1},Q^{n+1}]$. 
  
  Next, we show that $(\bar{c},\bar{Q})$ is a.e. in $(\cQdomM)^{\circ}$.
  Since the minimizing energy is finite, the set on which
  \(q(\bar Q/\bar c)=+\infty\) has measure zero. Hence
  \(q(\bar Q/\bar c)<+\infty\) a.e. on \(\{0<\bar c<M\}\).
  Define 
  \begin{equation*}
    \Omega_1=\{\mr\in\Omega|\bar{c}(\mr)=0\},\quad \Omega_2=\{\mr\in\Omega|\bar{c}(\mr)=M\}, 
  \end{equation*}
  with their measures denoted by $|\Omega_1|,\,|\Omega_2|$.
  Assume that $\xi>0$ and let
  \begin{equation*}
    (\tilde{c},\tilde{Q})=\big(\xi \underline{c}+(1-\xi)\bar{c},\xi \underline{Q}+(1-\xi)\bar{Q}\big),
  \end{equation*}
  According to our discussions and assumptions above, it holds that 
  $F_1[\underline{c},\underline{Q}]$, $F_1[\bar{c},\bar{Q}]$, 
  $\int_{\Omega}\qeN(\underline{c},\underline{Q})\md\mr$, and $\int_{\Omega}\qeN(\bar{c},\bar{Q})\md\mr$ all take some finite values.
  Below, we shall use $A_i$ to denote some finite constants. 
  By convexity, we deduce that 
  \begin{align*}
    F_1[\tilde{c},\tilde{Q}]-F_1[\bar{c},\bar{Q}]\le \xi \Big(F_1[\underline{c},\underline{Q}]-F_1[\bar{c},\bar{Q}]\Big)=A_1\xi. 
  \end{align*}
  and 
  \begin{align*}
    \int_{\Omega\backslash(\Omega_1\cup\Omega_2)}\qeN(\tilde{c},\tilde{Q})-\qeN(\bar{c},\bar{Q})\md\mr
    \le \xi\int_{\Omega\backslash(\Omega_1\cup\Omega_2)}\qeN(\underline{c},\underline{Q})-\qeN(\bar{c},\bar{Q})\md\mr=A_2\xi. 
  \end{align*}
  For the region $\Omega_1\cup\Omega_2$, we calculate that 
  \begin{align*}
    \int_{\Omega_1}\qeN(\tilde{c},\tilde{Q})-\qeN(\bar{c},\bar{Q})\md\mr
    =&\,\int_{\Omega_1}\xi \underline{c}\log (\xi\underline{c})+(M-\xi\underline{c})\log(1-\xi\underline{c}/M)+\xi\underline{c}\nu q(\underline{Q}/\underline{c})\md\mr \\
    \leq&\,\xi\left(\log(\xi M)\int_{\Omega_1}\underline{c}\md\mr+\int_{\Omega_1}(M-\underline{c})\log(1-\underline{c}/M)+\underline{c}\nu q(\underline{Q}/\underline{c})\md\mr\right)\\
    \leq&\,\xi\left(\log(\xi M)\int_{\Omega_1}\underline{c}\md\mr+A_3\right),
  \end{align*}
and
\begin{align*}
    \int_{\Omega_2}\qeN(\tilde{c},\tilde{Q})-\qeN(\bar{c},\bar{Q})\md\mr
    \leq&\,\int_{\Omega_2}(\tilde{c}-\bar{c})\log\frac{M-\tilde{c}}{M\tilde{c}}+\tilde{c}\nu q(\tilde{Q}/\tilde{c})-\bar{c}\nu q(\bar{Q}/\bar{c})\md\mr \nonumber\\
    \leq&\,\xi\log(\xi M)\int_{\Omega_2}(M-\underline{c})\md\mr+\int_{\Omega_2}\tilde{c}\nu q(\tilde{Q}/\tilde{c})-M\nu q(\bar{Q}/M)\md\mr. 
  \end{align*}
  Recall that $\phi(c,Q)=c\nu q(Q/c)$, defined in the proof of Theorem \ref{qe_properties}, is also convex.
  We thus obtain 
  \begin{align*}
    \int_{\Omega_2}\tilde{c}\nu q(\tilde{Q}/\tilde{c})-M\nu q(\bar{Q}/M)\md\mr
    \le \xi \int_{\Omega_2}\Big(\underline{c}\nu q(\underline{Q}/\underline{c})-M\nu q(\bar{Q}/M)\Big)\md\mr=A_4\xi. 
  \end{align*}
  Note that
  $$
  A_0\triangleq\int_{\Omega_1}\underline{c}\md\mr+\int_{\Omega_2}(M-\underline{c})\md\mr\ge 0.
  $$
  Combining the estimates above, we arrive at 
  \begin{align*}
    F[\tilde{c},\tilde{Q}]-F[\bar{c},\bar{Q}]\le \xi(A_0\log(\xi M)+A_1+A_2+A_3+A_4). 
  \end{align*}
  If $A_0>0$, there exists sufficiently small $\xi>0$ to make the right-hand side negative, leading to a contradiction. 
  Hence $A_0=0$, which, by $0<\underline{c}<M$ a.e. in $\Omega$, implies $|\Omega_1|=|\Omega_2|=0$.
  By calculating the variation of $F[c^{n+1},Q^{n+1}]$ similar to \eqref{delta_E}, the minimizer of $F$ gives a solution to the scheme \eqref{t_discrete_c}--\eqref{t_discrete_bnd}.

  We now turn to uniqueness, since it is not guaranteed that every solution of the scheme is a minimizer of $F[c^{n+1},Q^{n+1}]$.
  Suppose that the scheme has two solutions $(c^{n+1,1},Q^{n+1,1})$ and $(c^{n+1,2},Q^{n+1,2})$.
  Denote $\alpha=c^{n+1,1}-c^{n+1,2}$, $U=Q^{n+1,1}-Q^{n+1,2}$.
  They obey the following equations, 
  \begin{align}
     & \frac{\alpha}{\delta t}=-\scrG \Big(\frac{\partial \qeN\left(c^{n+1,1},Q^{n+1,1}\right)}{\partial c^{n+1,1}}-\frac{\partial \qeN\left(c^{n+1,2},Q^{n+1,2}\right)}{\partial c^{n+1,2}}+2\gamma \alpha-2c_{20}\Delta \alpha \nonumber\\
     & \qquad\qquad\qquad -c_{23}\partial_{ij}U_{ij}+2c_{40}\Delta^2 \alpha\Big), \nonumber\\
     & \frac{U_{ij}}{\delta t}=-\scrP \Bigg({\left(\frac{\partial \qeN(c^{n+1,1},Q^{n+1,1})}{\partial Q^{n+1,1}}\right)}_{ij}-{\left(\frac{\partial \qeN(c^{n+1,2},Q^{n+1,2})}{\partial Q^{n+1,2}}\right)}_{ij}-2c_{21}\Delta U_{ij} \nonumber\\
     & \qquad\qquad\qquad-2c_{22}\partial_{ik}U_{kj}-c_{23}\partial_{ij}\alpha\Bigg),\nonumber\\
     & \frac{\partial_{\bfn }\alpha}{\delta t}=-\Delta \alpha,\quad \alpha=U_{ij}=0,\quad \text{ on }\partial\Omega. \nonumber
  \end{align}
  Taking the inner product of these equations with $\alpha,\,U,\,\partial_{\bfn}d$, respectively, we obtain 
  \begin{equation}\label{RS}
    \begin{aligned}
      \frac{1}{\delta t} & (\|\alpha\|^2+\|U\|^2)+\int_\Omega \left(\frac{\partial \qeN\left(c^{n+1,1},Q^{n+1,1}\right)}{\partial c^{n+1,1}}-\frac{\partial \qeN\left(c^{n+1,2},Q^{n+1,2}\right)}{\partial c^{n+1,2}}\right)\alpha \\
      & +\left(\frac{\partial \qeN\left(c^{n+1,1},Q^{n+1,1}\right)}{\partial Q^{n+1,1}}-\frac{\partial \qeN\left(c^{n+1,2},Q^{n+1,2}\right)}{\partial Q^{n+1,2}}\right)\cdot U+2g_+(\alpha,U)\md\mr \\
      & +2c_{40}\int_{\partial\Omega}\frac{(\partial_{\bfn}\alpha)^2}{\delta t}\md S=0.
    \end{aligned}
  \end{equation}
  As $\left(c^{n+1,1},Q^{n+1,1}\right),\left(c^{n+1,2},Q^{n+1,2}\right)\in \mathcal{Q}_\mathrm{phys}$ are a.e. in $\Omega $, we have
  \begin{equation*}
    \begin{aligned}
       & \left(\frac{\partial \qeN\left(c^{n+1,1},Q^{n+1,1}\right)}{\partial c^{n+1,1}}-\frac{\partial \qeN\left(c^{n+1,2},Q^{n+1,2}\right)}{\partial c^{n+1,2}}\right)\alpha \\
       & +\left(\frac{\partial \qeN\left(c^{n+1,1},Q^{n+1,1}\right)}{\partial Q^{n+1,1}}-\frac{\partial \qeN\left(c^{n+1,2},Q^{n+1,2}\right)}{\partial Q^{n+1,2}}\right)\cdot U\geq0,\quad \mathrm{a.e.\, in\,\,\Omega},
    \end{aligned}
  \end{equation*}
  according to Corollary \ref{cor-q}. Thus, the left-hand side of (\ref{RS}) is nonnegative, which implies that $\alpha=0,\,U=0$ a.e. in $\Omega$ and $\partial_{\bfn}\alpha=0$ a.e. in $\partial\Omega$.

  We turn to the energy dissipation. 
  Taking the inner product of the \eqref{t_discrete_c}--\eqref{t_discrete_bnd} with $(c^{n+1}-c^n)$, $(Q^{n+1}-Q^n)$, $\partial_{\bfn}c^{n+1}-\partial_{\bfn}c^n$, respectively, we deduce that 
  \begin{equation}\nonumber
    \begin{aligned}
      \frac{\|c^{n+1}-c^n\|^2}{\delta t}= 
      & -\left(\frac{\partial \qeN(c^{n+1},Q^{n+1})}{\partial c^{n+1}},c^{n+1}-c^n\right)-2((c_{00}-\gamma)c^{n}+\gamma c^{n+1},c^{n+1}-c^n) \\
      & +2c_{20}(\Delta c^{n+1},c^{n+1}-c^n)-2c_{40}(\Delta^2c^{n+1},c^{n+1}-c^n)                                                     \\
      & +c_{23}(\partial_{ij}Q_{ij}^{n+1},c^{n+1}-c^n),                                                                                \\
      \frac{\|Q^{n+1}-Q^n\|^2}{\delta t}= 
      & -\left(\frac{\partial \qeN(c^{n+1},Q^{n+1})}{\partial Q^{n+1}},Q^{n+1}-Q^n\right)-2c_{02}(Q^{n},Q^{n+1}-Q^n)               \\
      & +2c_{21}\left(\Delta Q^{n+1},Q^{n+1}-Q^n\right)+2c_{22}(\partial_{ik}Q_{kj}^{n+1},Q_{ij}^{n+1}-Q_{ij}^n)                      \\
      & +c_{23}(\partial_{ij}c^{n+1},Q_{ij}^{n+1}-Q_{ij}^n),\\
      \frac{1}{\delta t}\int_{\partial\Omega} (\partial_{\bfn }c^{n+1}-\partial_{\bfn }c^n)^2\md S =&
      -\int_{\partial\Omega}(\partial_{\bfn }c^{n+1}-\partial_{\bfn }c^n)\Delta c^{n+1}\md S. 
    \end{aligned}
  \end{equation}
  According to Corollary \ref{cor-q},
  \begin{equation*}
    \begin{aligned}
      & \frac{\partial\qeN(c^{n+1},Q^{n+1})}{\partial c^{n+1}}(c^{n+1}-c^n)+\frac{\partial\qeN(c^{n+1},Q^{n+1})}{\partial Q^{n+1}}(Q^{n+1}-Q^n) \\
      \geq & \,\qeN(c^{n+1},Q^{n+1})-\qeN(c^n,Q^n).
    \end{aligned}
  \end{equation*}
  For the other terms, we utilize the identity 
  $$
  u_1(v_1-v_2)+v_1(u_1-u_2)=u_1v_1-u_2v_2+(u_1-u_2)(v_1-v_2). 
  $$
  Note that 
  $$
  (\Delta^2 c^{n+1},c^{n+1}-c^n)=\left(\Delta c^{n+1},\Delta(c^{n+1}-c^n)\right)-\int_{\partial\Omega}\Delta c^{n+1}\partial_{\bfn}(c^{n+1}-c^n)\md S. 
  $$
  Summing up the above three equalities, we derive the energy law \eqref{energy_law}. 
  Note that $\int_\Omega g_+(c^{n+1}-c^n,Q^{n+1}-Q^n)\md\mr\geq0$ because $c^{n+1}-c^n=0$ and $Q^{n+1}-Q^n=0$ on $\partial\Omega $.
\end{proof}

The error estimate of the scheme \eqref{t_discrete_c}--\eqref{t_discrete_bnd} involves the normal derivative.
As we will see below, the propagation of error is not broken by the boundary integral emerged from integration by parts because the coupling term is cancelled. 
Define $\errc^n(\mr)=c(\mr,t^n)-c^n(\mr)$, $R^n(\mr)=Q(\mr,t^n)-Q^n(\mr)$, and
$\varepsilon^{n}=\|\errc^{n}\|^2+2c_{40}\int_{\partial\Omega}{(\partial_\bfn \errc ^{n})}^2\md S+\|R^{n}\|^2$.

\begin{theorem}
  Assume that $\partial_{tt}c,\partial_{tt}Q,\partial_{t}c,\partial_{t}Q\in L^2(0, T;L^2(\Omega))$, $\partial_{tt}\partial_{\bfn }c\in L^2\big(0,T;L^2(\partial\Omega)\big)$. For the scheme \eqref{t_discrete_c}--\eqref{t_discrete_bnd}, it holds
  \begin{equation}\label{Error}
    \begin{aligned}
      \varepsilon^n\leq C\exp\big({(1-C\delta t)}^{-1}t^n\big)
       & \delta t^2\cdot\int_0^{t^n}\Big(\|\partial_{tt}c\|^2+\|\partial_{tt}Q\|^2                                            \\
       & +\|\partial_{t}c\|^2+\|\partial_{t}Q\|^2+\int_{\partial\Omega}{(\partial_{tt}\partial_{\bfn }c)}^2\md S\Big)\md t,
    \end{aligned}
  \end{equation}
  where the constant $ C $ depends on $c_{00}-\gamma$, $c_{02}$ and $c_{40}$.
\end{theorem}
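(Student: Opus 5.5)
The plan is the standard energy-type error argument for convex-splitting schemes, carried out so that the boundary evolution equation enters the same way it does in the energy dissipation proof of Theorem \ref{energylaw}. First I write the exact solution at $t^{n+1}$ in the form of the scheme \eqref{t_discrete_c}--\eqref{t_discrete_bnd}, which produces truncation errors. For $c$ this reads
$$
\frac{c(t^{n+1})-c(t^n)}{\delta t}=-\scrG\Big(\cdots\Big)+\tau_c^{n+1}.
$$
The residuals are as follows.
\begin{itemize}
\item $\tau_c^{n+1}$ collects $\frac{c(t^{n+1})-c(t^n)}{\delta t}-\partial_t c(t^{n+1})$ and the explicit-term residual $2(c_{00}-\gamma)(c(t^{n})-c(t^{n+1}))$.
\item $\tau_Q^{n+1}$ is formed analogously, with the explicit-term residual $2c_{02}(Q(t^n)-Q(t^{n+1}))$.
\item $\tau_b^{n+1}=\frac{\partial_\bfn c(t^{n+1})-\partial_\bfn c(t^n)}{\delta t}-\partial_t\partial_\bfn c(t^{n+1})$ on $\partial\Omega$.
\end{itemize}
By Taylor expansion with integral remainder and Cauchy--Schwarz,
$$
\|\tau_c^{n+1}\|^2\le C\delta t\int_{t^n}^{t^{n+1}}\big(\|\partial_{tt}c\|^2+\|\partial_t c\|^2\big)\md t,
$$
and similarly for $\tau_Q$ and for $\tau_b$, the latter using $\partial_{tt}\partial_\bfn c$ in $L^2(\partial\Omega)$. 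This is where the hypotheses of the theorem are used.

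Subtracting the scheme gives error equations for $(\errc^{n+1},R^{n+1},\partial_\bfn\errc^{n+1})$ with $\errc=R=0$ on $\partial\Omega$. The boundary equation is
$$
\frac{\partial_\bfn\errc^{n+1}-\partial_\bfn\errc^n}{\delta t}=-\Delta\errc^{n+1}+\tau_b^{n+1}.
$$
I take inner products with $\errc^{n+1}$, $R^{n+1}$, and $2c_{40}\partial_\bfn\errc^{n+1}$ on $\partial\Omega$. Since $\int_\Omega\errc^{n+1}\md\mr=0$ by mass conservation and $R^{n+1}$ is symmetric traceless, the projections $\scrG,\scrP$ drop out.

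The key cancellation is the integration by parts used in the proof of Theorem \ref{energylaw}:
$$
(\Delta^2\errc^{n+1},\errc^{n+1})=\|\Delta\errc^{n+1}\|^2-\int_{\partial\Omega}\Delta\errc^{n+1}\,\partial_\bfn\errc^{n+1}\md S .
$$
The boundary term, multiplied by $2c_{40}$, cancels exactly against the $-\Delta\errc^{n+1}$ contribution from the boundary equation. This is why $2c_{40}\int_{\partial\Omega}(\partial_\bfn\errc)^2\md S$ belongs in $\varepsilon^n$.

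The implicit quasi-entropy difference is nonnegative by Corollary \ref{cor-q}. This needs $(c^{n+1},Q^{n+1})$ and $(c(t^{n+1}),Q(t^{n+1}))$ to lie in $(\cQdomM)^\circ$ a.e., which Theorem \ref{energylaw} gives for the former and I assume for the exact solution. The implicit elastic part $\int_\Omega g_+(\errc^{n+1},R^{n+1})\md\mr$ plus $\gamma\|\errc^{n+1}\|^2$ is nonnegative, since by Proposition \ref{gamma_convex} the boundary integral vanishes when $\errc=0$ on $\partial\Omega$; I drop this term. The explicit linear terms $2(c_{00}-\gamma)(\errc^n,\errc^{n+1})$ and $2c_{02}(R^n,R^{n+1})$ are bounded by $C(\|\errc^n\|^2+\|\errc^{n+1}\|^2)$, and likewise for $R$.

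Using $2(a-b)a=a^2-b^2+(a-b)^2$ on all three time-difference terms and Young's inequality on the truncation terms, I expect to reach
$$
\varepsilon^{n+1}-\varepsilon^n\le C\delta t(\varepsilon^{n+1}+\varepsilon^n)+C\delta t\big(\|\tau_c^{n+1}\|^2+\|\tau_Q^{n+1}\|^2+\|\tau_b^{n+1}\|^2_{L^2(\partial\Omega)}\big).
$$
Then $(1-C\delta t)\varepsilon^{n+1}\le(1+C\delta t)\varepsilon^n+\dots$ with $\varepsilon^0=0$. A discrete Gronwall argument gives the factor $\exp\big((1-C\delta t)^{-1}t^n\big)$, and summing the truncation bounds gives \eqref{Error}.

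The main obstacle is the boundary truncation term $2c_{40}\int_{\partial\Omega}\tau_b^{n+1}\partial_\bfn\errc^{n+1}\md S$. It must be absorbed by the boundary component of $\varepsilon$ via Young's inequality rather than by an interior quantity. This works only because the weight $2c_{40}$ is built into $\varepsilon^n$, so the constant $C$ ends up depending on $c_{40}$ as stated. I would also double-check the sign in the $c_{23}$ cross terms. These combine as $c_{23}\big[(\partial_{ij}R_{ij},\errc)+(\partial_{ij}\errc,R_{ij})\big]$, which after integration by parts with zero Dirichlet data is part of $g_+(\errc,R)$ and hence controlled by Proposition \ref{gamma_convex}.
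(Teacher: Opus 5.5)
Your proposal is correct and follows the paper's proof: you use the same error equations and truncation terms, test with $\errc^{n+1}$, $R^{n+1}$ and the boundary normal derivative, and apply Corollary~\ref{cor-q} to the quasi-entropy difference. As in the paper, you integrate $\Delta^2$ by parts so that $2c_{40}\int_{\partial\Omega}\Delta\errc^{n+1}\partial_\bfn\errc^{n+1}\md S$ becomes, via the boundary error equation, the telescoping boundary part of $\varepsilon$, then finish with Young's inequality on the truncation terms and a discrete Gronwall argument; the only cosmetic difference is that the paper substitutes the boundary equation directly into that integral rather than testing it separately with $2c_{40}\partial_\bfn\errc^{n+1}$.
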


\begin{proof}
  We deduce from \eqref{t_discrete_c}--\eqref{t_discrete_bnd} the equations for the error, 
  \begin{align}
    & {\errc ^{n+1}-\errc ^n\over\delta t}+T_c^n=-\scrG \Big({\partial \qeN\left(c(\mr,t^{n+1}),Q(\mr,t^{n+1})\right)\over \partial c(\mr,t^{n+1})}-{\partial \qeN\left(c^{n+1}(\mr),Q^{n+1}(\mr)\right)\over \partial c^{n+1}(\mr)}   \nonumber\\
    & \quad+2(c_{00}-\gamma)(\errc ^{n}+\theta_c^n)+2\gamma \errc ^{n+1}-2c_{20}\Delta \errc ^{n+1}-c_{23}\partial_{ij}R_{ij}^{n+1}+2c_{40}\Delta^2\errc ^{n+1}\Big), \nonumber\\
    & {R_{ij}^{n+1}-R_{ij}^n\over\delta t}+T^n_{ij}=-\scrP \Bigg({\displaystyle {\left({\partial \qeN\left(c(\mr,t^{n+1}),Q(\mr,t^{n+1})\right)\over \partial Q(\mr,t^{n+1})}\right)}_{ij}}-{\displaystyle {\left({\partial \qeN\left(c^{n+1}(\mr),Q^{n+1}(\mr)\right)\over \partial Q^{n+1}(\mr)}\right)}_{ij}} \nonumber\\
    & \quad+2c_{02}(R_{ij}^n+\Theta^n_{ij})-2c_{21}\Delta R_{ij}^{n+1}-2c_{22}\partial_{ik}R_{kj}^{n+1}-c_{23}\partial_{ij}\errc ^{n+1}\Bigg), \nonumber\\
    & {\partial_{\bfn }\errc ^{n+1}-\partial_{\bfn }\errc ^{n}\over\delta t}+T_{\partial\Omega}^n=-\Delta \errc ^{n+1},\quad \errc ^{n+1}=0,\quad R^{n+1}=0,\quad \text{ on } \partial \Omega. \nonumber
  \end{align}
  where the truncation errors are
  \begin{equation}\nonumber
    \begin{aligned}
      T_c^n                & ={1\over\delta t}\big(c(\mr,t^{n+1})-c(\mr,t^n)\big)-\partial_t c(\mr,t^{n+1})={1\over\delta t}\int_{t^n}^{t^{n+1}}(t^n-s)\partial_{tt}c\,\md t, \\
      \theta_c^n                & =c(\mr,t^{n+1})-c(\mr,t^n)=\int_{t^n}^{t^{n+1}}\partial_t c\,\md t,                                                                              \\
      T^n                  & ={1\over\delta t}\int_{t^n}^{t^{n+1}}(t^n-s)\partial_{tt}Q\,\md t,                                                                               \\
      \Theta^n                  & =Q(\mr,t^{n+1})-Q(\mr,t^n)=\int_{t^n}^{t^{n+1}}\partial_t Q\,\md t,                                                                              \\
      T_{\partial\Omega}^n & ={1\over\delta t}\left(\partial_{\bfn }c(\mr,t^{n+1})-\partial_{\bfn }c(\mr,t^n)\right)-\partial_t\partial_{\bfn }c(\mr,t^{n+1})                         \\
                           & ={1\over\delta t}\int_{t^n}^{t^{n+1}}(t^n-s)\partial_{tt}\partial_{\bfn }c\,\md t.
    \end{aligned}
  \end{equation}
  Take the inner product of the three equations above with $\errc ^{n+1}$, $R^{n+1}$, $\partial_{\bfn}\errc^{n+1}$, respectively and sum them up. Notice that $\errc ^{n+1}=\scrG \errc ^{n+1}$ and $R^{n+1}=\scrP R^{n+1}$, so that the projection operators do not affect.
  We only emphasize the derivations $\qeN$ and the boundary terms. 
  For the $\qeN$ terms, we deduce from Corollary \ref{cor-q} that
  \begin{equation*}
    \begin{aligned}
        & \left({\partial \qeN\left(c(\mr,t^{n+1}),Q(\mr,t^{n+1})\right)\over \partial c(\mr,t^{n+1})}-{\partial \qeN\left(c^{n+1}(\mr),Q^{n+1}(\mr)\right)\over \partial c^{n+1}(\mr)}\right)\errc ^{n+1}      \\
      + & \left({\partial \qeN\left(c(\mr,t^{n+1}),Q(\mr,t^{n+1})\right)\over \partial Q(\mr,t^{n+1})}-{\partial \qeN\left(c^{n+1}(\mr),Q^{n+1}(\mr)\right)\over \partial Q^{n+1}(\mr)}\right)\cdot R^{n+1}\geq0. 
    \end{aligned}
  \end{equation*}
  For the boundary terms, we have 
  \begin{equation*}
    \begin{aligned}
      & \int_\Omega\Big(-2\gamma \errc ^{n+1}+2c_{20}\Delta \errc ^{n+1}+c_{23}\partial_{ij}R_{ij}^{n+1}-2c_{40}\Delta^2\errc ^{n+1}\Big) \errc ^{n+1}                 \\
      & \qquad+\left(2c_{21}\Delta R_{ij}^{n+1}+2c_{22}\partial_{ik}R_{kj}^{n+1}+c_{23}\partial_{ij}\errc ^{n+1}\right)R_{ij}^{n+1}\md\mr                    \\
      =    & -\int_\Omega 2g_e(\errc ^{n+1},R^{n+1})+2\gamma {(\errc ^{n+1})}^2\md\mr+2c_{40}\int_{\partial\Omega}\Delta \errc ^{n+1}\partial_\bfn \errc ^{n+1}\md S, 
    \end{aligned}
  \end{equation*}
  with 
  \begin{equation*}
    \begin{aligned}
           & \int_{\partial\Omega}\Delta \errc ^{n+1}\partial_\bfn \errc ^{n+1}\md S                                                                                                                                             \\
      =    & \int_{\partial\Omega}\big(-{\partial_\bfn  \errc ^{n+1}-\partial_\bfn  \errc ^n\over\delta t}-T^n_{\partial\Omega}\big)\partial_\bfn \errc ^{n+1}\md S                                                       \\
      \leq & {1\over2\delta t}\int_{\partial\Omega}{(\partial_\bfn  \errc ^n)}^2-{(\partial_\bfn  \errc ^{n+1})}^2\md S+{1\over2}\int_{\partial\Omega}{(T_{\partial\Omega}^n)}^2+{(\partial_\bfn  \errc ^{n+1})}^2\md S.
    \end{aligned}
  \end{equation*}
  Therefore, we arrive at 
  \begin{equation*}
    \begin{aligned}
           & {1\over2\delta t}\bigg(\|\errc ^{n+1}\|^2-\|\errc ^{n}\|^2+2c_{40}\int_{\partial\Omega}{(\partial_\bfn  \errc ^{n+1})}^2-{(\partial_\bfn  \errc ^{n})}^2\md S+\|R^{n+1}\|^2-\|R^{n}\|^2\bigg) \\
      \leq & \, \|\errc ^{n+1}\|^2+\|R^{n+1}\|^2+\|T_c^n\|^2+\|T^n\|^2+c_{40}\int_{\partial\Omega}{(T_{\partial\Omega}^n)}^2+{(\partial_\bfn  \errc ^{n+1})}^2\md S                                      \\
           & -(c_{00}-\gamma)\big(\|\errc ^{n+1}\|^2+\|\errc ^n\|^2+\|\theta_c^n\|^2\big)-2c_{02}\big(\|R^{n+1}\|^2+\|R^n\|^2+\|\Theta^n\|^2\big),
    \end{aligned}
  \end{equation*}
  that is,
  \begin{equation*}
    \begin{aligned}
      {1\over2\delta t}\big(\varepsilon^{n+1}-\varepsilon^n\big)
        \leq &\, C\bigg(\varepsilon^{n+1}+\varepsilon^{n}+\|T_c^n\|^2+\|T^n\|^2              \\
       & +\|\theta_c^n\|^2+\|\Theta^n\|^2+\int_{\partial\Omega}{(T_{\partial\Omega}^n)}^2\md S\bigg),
    \end{aligned}
  \end{equation*}
  where $ C $ depends on $(c_{00}-\gamma), c_{02}$ and $c_{40}$.
  The truncation errors can be estimated as
  \begin{equation*}
    \begin{aligned}\label{truncation}
        \|T_c^n\|^2+\|T^n\|^2&\leq C\delta t\int_{t^n}^{t^{n+1}}\|\partial_{tt}c\|^2+\|\partial_{tt}Q\|^2\md t,                                                         \\
        \|\theta_c^n\|^2+\|\Theta^n\|^2&\leq C\delta t\int_{t^n}^{t^{n+1}}\|\partial_{t}c\|^2+\|\partial_{t}Q\|^2\md t,                                                           \\
        \int_{\partial\Omega}{(T_{\partial\Omega}^n)}^2\md S&\leq C\delta t\int_{t^n}^{t^{n+1}}\int_{\partial\Omega}{(\partial_{tt}\partial_{\bfn }c)}^2\md S\,\md t.
    \end{aligned}
  \end{equation*}
  Using Gronwall's inequality, we deduce (\ref{Error}).
\end{proof}

\subsection{Full Discretization}
In the spatial discretizations, to keep the properties above, it is significant for the energy and its variation to be consistent. 
To this end, we shall discretize the energy first, then derive the scheme by taking derivatives w.r.t. the discrete variables. 
In particular, the normal derivatives on the boundary need to be handled carefully.

Here, we consider a square region $\Omega={[0,L]}^2$, and divide it into $N^2$ cells uniformly.
Denote $h=L/N$. 
We denote by the index $(l,m)$ the point $(lh,mh)$, 
and by $\Gamma_{l,m}$ the cell whose lower left index is $(l,m)$. 
In addition, for the discussions of the boundary equation, we introduce the notations $\sigma^x_{l,0}$ (resp. $\sigma^x_{l,N}$) to represent the boundary cell $x\in [lh,(l+1)h]$, $y=0$ (resp. $y=L$), and $\sigma^y_{0,m}$ (resp. $\sigma^y_{N,m}$) for the boundary cell $y\in [lh,(l+1)h]$, $x=0$ (resp. $x=L$).

In the free energy, we need to discretize $\partial _1 u\partial_1 v$,
$\partial_1u\partial_2 v$, $\partial_{11}u\partial_{11}v$ and
$\partial_{11}u\partial_{22}v$ on each cell. 
Generally speaking, discrete second order spatial derivatives are defined on the nodes.
But first order spatial derivatives are different for various cases. 

On the cell $\Gamma_{l,m} $, $\partial _1 u\partial_1 v$ is discretized as
\begin{equation}\label{D_1D_1}
  \begin{aligned}
    (D_1 uD_1v)_{\Gamma_{l,m}}=
    & {1\over 2}\left({u_{l+1,m}-u_{l,m}\over h}\right)\left({v_{l+1,m}-v_{l,m}\over h}\right)            \\
    & +{1\over 2}\left({u_{l+1,m+1}-u_{l,m+1}\over h}\right)\left({v_{l+1,m+1}-v_{l,m+1}\over h}\right).
  \end{aligned}
\end{equation}
The variational derivative of $\partial _1 u\partial_1 v$ w.r.t. $v$ is $-\partial_{11}u$. 
Its discretization at $(l,m)$ is obtained by taking the derivative w.r.t. $v_{l,m}$. Here, we need to sum up all the cells involving $v_{l,m}$ for the term $D_1uD_1v$. It leads to the commonly used second difference, 
\begin{equation}\label{D11u}
  -(D_{11} u)_{l,m}=-{u_{l-1,m}-2u_{l,m}+u_{l+1,m}\over h^2}.
\end{equation}
The term $\partial_1u\partial_2 v$ is discretized as
\begin{equation}
  {(D_1uD_2v)}_{\Gamma_{l,m}} = \frac{1}{4h^2}\left(u_{l+1,m+1}+u_{l+1,m}-u_{l,m+1}-u_{l,m}\right)\left(v_{l+1,m+1}-v_{l+1,m}+v_{l,m+1}-v_{l,m}\right). \nonumber
\end{equation}
The discretization of its variational derivative, $-\partial_{12}u$, is calculated by taking the derivative w.r.t. $v_{l,m}$, yielding 
\begin{equation}
  -(D_{12}u)_{l,m}=-(D_{21}u)_{l,m}=\frac{-u_{l-1,m-1}-u_{l+1,m+1}+u_{l-1,m+1}+u_{l+1,m-1}}{4h^2}.\nonumber
\end{equation}
Then we turn to the squared second order derivatives in the free energy. 
Since second order derivatives have been defined on the nodes, it is natural to discretize $\partial_{11} u\partial_{11}v$ as the average on the four nodes of the cell, i.e. 
\begin{equation}\label{D11uD11v}
  \begin{aligned}
    {(D_{11} uD_{11}v)}_{\Gamma_{l,m}}= 
    & {1\over4}\Big[{(D_{11} uD_{11}v)}_{l,m}+{(D_{11} uD_{11}v)}_{l+1,m} \\
      & +{(D_{11} uD_{11}v)}_{l,m+1}+{(D_{11} uD_{11}v)}_{l+1,m+1}\Big],
  \end{aligned}
\end{equation}
where $(D_{11}u)_{l,m}$ is deduced above in \eqref{D11u}.
Because $\partial_{1111}u$ is the variational derivative of $\partial_{11} u\partial_{11}v$ w.r.t. $v$, we deduce its discretization by taking the derivatives of $v_{l,m}$. 
For $2\le l,m\le N-2$, the sum of terms involving $v_{l,m}$ is given by 
\begin{equation}
  {(D_{11} uD_{11}v)}_{l-1,m}+{(D_{11} uD_{11}v)}_{l,m}+{(D_{11} uD_{11}v)}_{l+1,m}. \nonumber
\end{equation}
Thus, we deduce that $\partial_{1111}u$ is discretized as 
\begin{equation}
  (D_{1111} u)_{l,m}={u_{l-2,m}-4u_{l-1,m}+6u_{l,m}-4u_{l+1,m}+u_{l+2,m}\over h^4}.\label{D1111}
\end{equation}
The term $\partial_{11} u\partial_{22} v$ is handled in the same way, 
yielding 
\begin{equation*}
  \begin{aligned}
    (D_{1122} u)_{l,m}={1\over h^4}\big(
    & u_{l-1,m-1}-2u_{l-1,m}+u_{l-1,m+1}-2u_{l,m-1}+4u_{l,m}-2u_{l,m+1} \\
    & +u_{l+1,m-1}-2u_{l+1,m}+u_{l+1,m+1}\big).
  \end{aligned}
\end{equation*}
For $l=1,N-1$, or $m=1,N-1$, the discretized free energy involving the node $(l,m)$ are different. 
For example, the sum of terms \eqref{D11uD11v} involving $v_{1,m}$ is 
\begin{equation}\nonumber
  \frac{1}{2}{(D_{11} uD_{11}v)}_{0,m}+{(D_{11} uD_{11}v)}_{1,m}+{(D_{11} uD_{11}v)}_{2,m}. 
\end{equation}
To be consistent with the case $2\le l,m\le N-2$, we introduce ghost nodes outside and adopt the discretization of $\partial_{\bfn } v$ on the boundary nodes below,  
\begin{align}
  &(D_{\bfn }^xv)_{0,m}=\frac{v_{-1,m}-v_{1,m}}{2h},\
  (D_{\bfn }^xv)_{N,m}=\frac{v_{N+1,m}-v_{N-1,m}}{2h},\nonumber\\
  &(D_{\bfn }^yv)_{l,0}=\frac{v_{l,-1}-v_{l,1}}{2h},\
  (D_{\bfn }^yv)_{l,N}=\frac{v_{l,N+1}-v_{l,N-1}}{2h}. \label{D_n}
\end{align}
Then, we can write 
\begin{equation}\label{D11_0m}
\frac{1}{2}(D_{11}v)_{0,m}=\frac{1}{h}\big((D_{\bfn }^xv)_{0,m}+\frac{1}{h}(v_{1,m}-v_{0,m})\big). 
\end{equation}
Now, we regard these discrete normal derivatives as independent variables in addition to $v_{l,m}$ for $1\le l,m\le N-1$. 
Under this viewpoint, we take derivative w.r.t. $v_{1,m}$ to obtain \eqref{D1111}. 

The truncation errors of the discretizations above are $O(h^2)$, which are summarized below. 
\begin{lemma}\label{trunc}
For $u\in C^6(\overline{\Omega})$, we have 
\begin{align}
  |(\partial_{11}u-D_{11}u)_{l,m}|\leq{h^2\over12}\max_{\overline{\Omega}} |\partial_{1111}u|&,\ 
  |(\partial_{12}u-D_{12}u)_{l,m}|\leq\frac{h^2}{3}\max_{\overline{\Omega},k=1,2}|\partial_{12kk}u|,\nonumber\\
  |(\partial_{1111}u-D_{1111} u)_{l,m}|\leq {h^2\over6}\max_{\overline{\Omega}}|\partial_{111111}u|&,\ 
  |(\partial_{1122}u-D_{1122} u)_{l,m}|\leq {h^2\over6}\max_{\overline{\Omega},k=1,2}|\partial_{1122}\partial_{kk}u|.\nonumber
\end{align}
Let $\Omega_h=[-h,L+h]^2$.
For $v\in C^3(\overline{\Omega_h})$, we have 
\begin{align*}
  \left|(\partial_{\bfn}v-D_{\bfn}^xv)_{0,m}\right|\leq h^2\max_{\overline{\Omega_h}}|\partial_{111}v|,\quad
  \left|(\partial_{\bfn}v-D_{\bfn}^yv)_{l,0}\right|\leq h^2\max_{\overline{\Omega_h}}|\partial_{222}v|.
\end{align*}
\end{lemma}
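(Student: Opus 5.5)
The plan is to prove each bound by Taylor expansion about the node $(lh,mh)$ (or the boundary node), using the integral (or Lagrange) form of the remainder so that the constants come from maxima of the relevant derivatives. All stencils are symmetric about the center node, so the odd-order terms cancel and the first surviving error term is of order $h^2$.

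For $D_{11}$, I expand $u(x\pm h,y)$ to fourth order, with a remainder in which $\partial_{1111}u$ is evaluated at intermediate points. Adding the two expansions gives
\[
(D_{11}u)_{l,m}-\partial_{11}u=\frac{h^2}{24}\bigl(\partial_{1111}u(\xi_+)+\partial_{1111}u(\xi_-)\bigr),
\]
which is bounded by $\frac{h^2}{12}\max|\partial_{1111}u|$.

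For $D_{1111}$, I write the five-point stencil as $D_{11}D_{11}$ in $x$. Expanding $u(x\pm 2h)$ and $u(x\pm h)$ to sixth order, the weights $1,-4,6,-4,1$ kill the orders $0$ through $3$, and the order-$5$ terms vanish by symmetry. What remains is $\partial_{1111}u$ plus a sixth-order remainder with coefficient
\[
\frac{2\cdot 2^6-8}{6!}\,h^2=\frac{h^2}{6}.
\]
The constant then follows by bounding each intermediate-point value by the maximum.

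For $D_{12}$ and $D_{1122}$ I use two-variable Taylor expansion, or equivalently a tensor-product argument. For $D_{12}$, the central difference in $x$ of the central difference in $y$ is $\partial_{12}u$ plus $\frac{h^2}{6}(\partial_{1112}u+\partial_{1222}u)$ plus higher-order terms. Bounding remainders, rather than stopping at leading terms, gives the bound $\frac{h^2}{3}\max_k|\partial_{12kk}u|$. For $D_{1122}$, I note it equals $D_{11}D_{22}u$, the product of the two one-dimensional second-difference operators. Writing
\[
D_{11}D_{22}u-\partial_{1122}u=D_{11}(D_{22}u-\partial_{22}u)+(D_{11}-\partial_{11})\partial_{22}u,
\]
each piece is controlled by the $D_{11}$ estimate applied to the appropriate function. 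Representing $D_{11}$ as an averaging kernel, $D_{11}w(x)=\int_{-1}^{1}(1-|s|)\,\partial_{11}w(x+sh)\,\md s$, keeps the derivatives inside a maximum. This yields $\frac{h^2}{12}\max|\partial_{1122}\partial_{22}u|+\frac{h^2}{12}\max|\partial_{1122}\partial_{11}u|\le\frac{h^2}{6}\max_k|\partial_{1122}\partial_{kk}u|$.

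For the normal derivative at $(0,m)$, the outward normal is $-e_1$, so $\partial_{\bfn}v=-\partial_1 v$. The approximation $(v_{-1,m}-v_{1,m})/(2h)$ is minus the central difference, whose error is $\frac{h^2}{6}$ times a $\partial_{111}v$ value at an intermediate point. That point lies in $[-h,h]$, which is why the domain is extended to $\Omega_h$; the error is therefore bounded by $h^2\max_{\overline{\Omega_h}}|\partial_{111}v|$, a deliberately loose constant. The $y$-boundaries are handled identically.

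The main obstacle is purely bookkeeping: keeping the remainder constants at or below those stated, especially for the mixed stencils. I expect the kernel (Peano) representation of the one-dimensional differences to make this clean, since each error becomes an average of the relevant higher derivative.
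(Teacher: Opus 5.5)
The paper states this lemma without proof, so your argument has to stand on its own. Most of it is correct: the $D_{11}$ bound (both Lagrange remainders carry the same coefficient $\frac{h^2}{24}$), the splitting for $D_{1122}=D_{11}D_{22}$ with the averaging kernel $D_{11}w(x)=\int_{-1}^{1}(1-|s|)\,\partial_{11}w(x+sh)\,\md s$, and the normal-derivative estimate.

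The step that fails is the constant for $D_{1111}$. Writing $w=\partial_{111111}u$, the Lagrange remainders give
\[
(D_{1111}u-\partial_{1111}u)_{l,m}=\frac{h^2}{720}\bigl(64\,w(\xi_{2})+64\,w(\xi_{-2})-4\,w(\xi_{1})-4\,w(\xi_{-1})\bigr).
\]
The intermediate points differ and enter with opposite signs, so the $-8$ cannot be absorbed into the $128$. Bounding each value by the maximum gives only $\frac{136}{720}h^2=\frac{17}{90}h^2>\frac{h^2}{6}$. The value $\frac{128-8}{720}=\frac16$ is just the coefficient of the leading term $\frac{h^2}{6}w(x)$.

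The repair is already in your toolbox. Since $D_{1111}=D_{11}D_{11}$,
\[
D_{11}D_{11}u-\partial_{1111}u=D_{11}\bigl(D_{11}u-\partial_{11}u\bigr)+\bigl(D_{11}-\partial_{11}\bigr)\partial_{11}u .
\]
The kernel gives $|D_{11}g|\le\max|\partial_{11}g|$ with $\partial_{11}g=(D_{11}-\partial_{11})\partial_{11}u$, so each piece is at most $\frac{h^2}{12}\max|w|$. Equivalently, $D_{1111}u(x)=\int_{-2}^{2}B(\sigma)\,\partial_{1111}u(x+\sigma h)\,\md\sigma$, where $B$ is the convolution of two hat functions. $B$ is nonnegative, symmetric, of unit mass and has second moment $\frac13$, which yields $\frac{h^2}{2}\cdot\frac13=\frac{h^2}{6}$.

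For $D_{12}$, only the tensor-product route you mention gives the stated form. A genuinely two-variable expansion of $\sum_{a,b=\pm1}ab\,u(x+ah,y+bh)$ with fourth-order Lagrange remainders leaves $ab\,\partial_{1111}u+6ab\,\partial_{1122}u+ab\,\partial_{2222}u$ evaluated at four different points. These terms do not cancel. Expanding one order further instead leaves an $O(h^4)$ sixth-derivative term. Neither version is bounded by $\max_k|\partial_{12kk}u|$ alone.

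Instead, let $\delta_k$ be the central first difference, with $\delta_1w(x)=\frac12\int_{-1}^{1}\partial_1w(x+sh)\,\md s$. Then
\[
\delta_1\delta_2u-\partial_{12}u=\delta_1\bigl(\delta_2u-\partial_2u\bigr)+\bigl(\delta_1-\partial_1\bigr)\partial_2u,
\]
and the two pieces are bounded by $\frac{h^2}{6}\max|\partial_{1222}u|$ and $\frac{h^2}{6}\max|\partial_{1112}u|$, giving $\frac{h^2}{3}$.

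Finally, all of these bounds require the stencil to lie in $\overline{\Omega}$. They hold for $1\le l,m\le N-1$ for the second-order stencils, but only for $2\le l\le N-2$ for $D_{1111}$. At $l=1$ the stencil reaches the ghost node $(-1,m)$, where $u$ must be given on $\overline{\Omega_h}$, as in the normal-derivative estimate.
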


The discretizations keep the positive definiteness of quadratic terms. 
\begin{lemma}\label{sq_dis}
  For any $u$ and $v$,
  \begin{equation*}
    h^2{( D_1u D_1u + D_2v D_2v + 2D_1u D_2v)}_{\Gamma_{l,m}}\ge 0. 
  \end{equation*}
\end{lemma}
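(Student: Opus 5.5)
The plan is to reduce the statement to an elementary quadratic inequality in four scalar differences. First I name the differences on the cell $\Gamma_{l,m}$:
\begin{equation*}
a_1=u_{l+1,m}-u_{l,m},\quad a_2=u_{l+1,m+1}-u_{l,m+1},\quad b_1=v_{l,m+1}-v_{l,m},\quad b_2=v_{l+1,m+1}-v_{l+1,m}.
\end{equation*}
By \eqref{D_1D_1}, $h^2(D_1uD_1u)_{\Gamma_{l,m}}=\frac12(a_1^2+a_2^2)$. The $y$-direction analogue of \eqref{D_1D_1}, obtained by exchanging the roles of the two indices, gives $h^2(D_2vD_2v)_{\Gamma_{l,m}}=\frac12(b_1^2+b_2^2)$.

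Next I rewrite the mixed term. In the definition of $(D_1uD_2v)_{\Gamma_{l,m}}$, the first factor $u_{l+1,m+1}+u_{l+1,m}-u_{l,m+1}-u_{l,m}$ regroups as $a_1+a_2$. The second factor $v_{l+1,m+1}-v_{l+1,m}+v_{l,m+1}-v_{l,m}$ regroups as $b_1+b_2$. Hence
\begin{equation*}
2h^2(D_1uD_2v)_{\Gamma_{l,m}}=\tfrac12(a_1+a_2)(b_1+b_2).
\end{equation*}

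Setting $A=a_1+a_2$ and $B=b_1+b_2$, the quantity in the statement becomes
\begin{equation*}
\tfrac12(a_1^2+a_2^2)+\tfrac12(b_1^2+b_2^2)+\tfrac12AB .
\end{equation*}
By Cauchy--Schwarz, $a_1^2+a_2^2\ge \frac12A^2$ and $b_1^2+b_2^2\ge\frac12 B^2$. The expression is therefore bounded below by
\begin{equation*}
\tfrac14A^2+\tfrac14B^2+\tfrac12AB=\tfrac14(A+B)^2\ge 0,
\end{equation*}
which is the claim.

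The only real point of care is the bookkeeping. The mixed discretization averages the $x$-differences and the $y$-differences over the cell, while the squared terms average the squares of those same differences. The Cauchy--Schwarz step is exactly what lets the averaged squares dominate the squared averages. No other earlier result is needed, and the argument is insensitive to whether the cell touches the boundary.
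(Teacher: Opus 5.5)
Your proof is correct. It differs from the paper's only in which sum-of-squares certificate you use. In your notation, the paper simply verifies the exact identity
\begin{equation*}
h^2(D_1uD_1u+D_2vD_2v+2D_1uD_2v)_{\Gamma_{l,m}}=\frac14\sum_{i,j=1}^{2}(a_i+b_j)^2 .
\end{equation*}
So the cell value is the average, over the four ways of pairing a horizontal edge of $\Gamma_{l,m}$ with a vertical one, of a discrete $(\partial_1u+\partial_2v)^2$.

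You instead pass through an inequality (averaged squares dominate squared averages) and then complete a single square. Made exact, your route is the mean--fluctuation decomposition $\frac14(A+B)^2+\frac14(a_1-a_2)^2+\frac14(b_1-b_2)^2$ of the same quadratic form.

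Each version buys something different. Yours isolates the mechanism: the mixed term sees $u,v$ only through the edge sums $A,B$. The paper's keeps every term in the shape of the continuous integrand. That makes weighted or coupled variants follow pairing by pairing, such as the form in Corollary~\ref{dis_spd}, where products like $D_1Q_{11}D_1c$ are not functions of edge sums alone. Your split handles those too, but only after also tracking the fluctuation parts. For the lemma as stated, both arguments are complete.
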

\begin{proof}
  It can be calculated as 
  \begin{align*}
    & \frac{1}{4}\Big({\left(u_{l+1,m} - u_{l,m}+v_{l,m+1} - v_{l,m}\right)}^2+{\left(u_{l+1,m} - u_{l,m}+v_{l+1,m+1} - v_{l+1,m}\right)}^2\\
    &\quad +{\left(u_{l+1,m+1} - u_{l,m+1}+v_{l,m+1} - v_{l,m}\right)}^2+{\left(u_{l+1,m+1} - u_{l,m+1}+v_{l+1,m+1} - v_{l+1,m}\right)}^2\Big). 
  \end{align*}
\end{proof}

\begin{corollary}\label{dis_spd}
  When $4\gamma_1c_{22}\ge c_{23}^2$, we have
  $$
  (c_{22}D_iQ_{ij}D_kQ_{kj} + c_{23}D_iQ_{ij}D_jc + \gamma_1D_jcD_jc)_{\Gamma_{l,m}}\ge 0.
  $$
\end{corollary}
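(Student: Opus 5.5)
The plan is to reduce Corollary~\ref{dis_spd} to Lemma~\ref{sq_dis} by a completion-of-squares argument, mirroring the continuous inequality used in the proof of Proposition~\ref{gamma_convex}. First write out the discrete divergence terms on the cell $\Gamma_{l,m}$: for $j=1$, $D_iQ_{i1}=D_1Q_{11}+D_2Q_{21}$, and for $j=2$, $D_iQ_{i2}=D_1Q_{12}+D_2Q_{22}$. The products on the cell are understood bilinearly through the cell discretizations, i.e.\ $(D_1uD_1v)_{\Gamma_{l,m}}$ from \eqref{D_1D_1} and $(D_1uD_2v)_{\Gamma_{l,m}}$ as defined above. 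Hence $(D_iQ_{ij}D_kQ_{kj})_{\Gamma_{l,m}}$ expands, for each fixed $j$, into $D_1u D_1u+D_2vD_2v+2D_1uD_2v$ with $u=Q_{1j}$, $v=Q_{2j}$, using $Q_{12}=Q_{21}$ and the symmetry $(D_1uD_2v)=(D_2vD_1u)$ of the cell formula.

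The key step is to obtain the same structure after absorbing the cross term with $c$. For each $j$, set $u=\sqrt{c_{22}}\,Q_{1j}+\frac{c_{23}}{2\sqrt{c_{22}}}\,c\,\delta_{1j}$ and $v=\sqrt{c_{22}}\,Q_{2j}+\frac{c_{23}}{2\sqrt{c_{22}}}\,c\,\delta_{2j}$. I will then check the identity
\begin{equation*}
  \sum_{j}\big(D_1uD_1u+D_2vD_2v+2D_1uD_2v\big)_{\Gamma_{l,m}}=\Big(c_{22}D_iQ_{ij}D_kQ_{kj}+c_{23}D_iQ_{ij}D_jc+\frac{c_{23}^2}{4c_{22}}D_jcD_jc\Big)_{\Gamma_{l,m}}.
\end{equation*}
This holds because all cell expressions are bilinear and symmetric, so the continuous algebra $|\sqrt{c_{22}}\,\partial_iQ_{ij}+\frac{c_{23}}{2\sqrt{c_{22}}}\partial_jc|^2$ carries over term by term. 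The one care point is that the $c$-contribution to $D_iQ_{ij}$ for index $j$ enters only through $D_j c$. This matches the term $D_iQ_{ij}D_jc$, where in the cell discretization $D_1 c$ and $D_2 c$ pair with the appropriate $D_1$ or $D_2$ factors via the same formulas.

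By Lemma~\ref{sq_dis}, each summand is nonnegative. The remainder $(\gamma_1-\frac{c_{23}^2}{4c_{22}})(D_jcD_jc)_{\Gamma_{l,m}}$ is nonnegative as well, since $\gamma_1\ge c_{23}^2/(4c_{22})$ and $(D_1cD_1c)_{\Gamma_{l,m}}$ is a sum of squares by \eqref{D_1D_1}, with $D_2cD_2c$ treated analogously. Summing gives the claim.

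The main obstacle I expect is verifying that the mixed products $D_1uD_2v$ in the cell are consistent with how $D_iQ_{ij}D_jc$ is discretized. Specifically, the term $D_1Q_{11}D_1c$ uses the formula \eqref{D_1D_1}, whereas $D_2Q_{21}D_1c$ uses the mixed formula with arguments in the correct order. Expanding $(D_1u+D_2v)$-type squares is valid only if the paper's cell product for $D_2vD_1u$ equals the defined $D_1uD_2v$. I will verify this directly from the explicit four-node formula before invoking Lemma~\ref{sq_dis}.
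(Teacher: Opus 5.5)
Your proof is correct. It shares the paper's overall strategy of regrouping the cell expression into forms covered by Lemma~\ref{sq_dis}, but the grouping differs, and yours is the one that actually works.

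\textbf{Your grouping.} You complete the square column by column. With $B(u,v)=(D_1uD_1u+D_2vD_2v+2D_1uD_2v)_{\Gamma_{l,m}}$ you obtain $\sum_j B(u_j,v_j)=\big(c_{22}D_iQ_{ij}D_kQ_{kj}+c_{23}D_iQ_{ij}D_jc+\frac{c_{23}^2}{4c_{22}}D_jcD_jc\big)_{\Gamma_{l,m}}$. You then drop the remainder $(\gamma_1-\frac{c_{23}^2}{4c_{22}})(D_jcD_jc)_{\Gamma_{l,m}}\ge 0$.

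\textbf{The paper's grouping.} The paper instead writes the expression as two lines of the shape $c_{22}D_1aD_1a+c_{23}D_1aD_2b+\gamma_1D_2bD_2b$, with $(a,b)=(Q_{11}+c,Q_{21})$ and $(a,b)=(Q_{11},Q_{21}+c)$. It then applies Lemma~\ref{sq_dis} to each line. As displayed, that regrouping does not reproduce the expression. For $Q=0$ it yields $c_{22}D_1cD_1c+\gamma_1D_2cD_2c$ instead of $\gamma_1D_jcD_jc$. Moreover, Lemma~\ref{sq_dis} has unit coefficients, so each such line would need exactly your rescaling-plus-remainder step anyway. Your per-$j$ identity is therefore what makes the reduction to Lemma~\ref{sq_dis} legitimate.

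\textbf{The obstacle you flagged.} It is harmless. The mixed cell product factors as the average of the two $x$-differences of $u$ times the average of the two $y$-differences of $v$. So with the natural convention $(D_2vD_1u)_{\Gamma_{l,m}}=(D_1uD_2v)_{\Gamma_{l,m}}$, all cell products are bilinear with the needed symmetry. Your expansions of $D_iQ_{ij}D_kQ_{kj}$ and $D_iQ_{ij}D_jc$ for each fixed $j$ then match term by term; symmetry of $Q$ is not even needed.

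\textbf{Extra hypothesis.} The only hypothesis you use beyond the statement is $c_{22}>0$, which the paper assumes. It is what turns $4\gamma_1c_{22}\ge c_{23}^2$ into $\gamma_1-\frac{c_{23}^2}{4c_{22}}\ge 0$. Without it the claim can fail: take $c_{22}=c_{23}=0$ and $\gamma_1<0$.
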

\begin{proof}
  Write it as 
  \begin{align*}
  &c_{22}D_1(Q_{11}+c)D_1(Q_{11}+c)+c_{23}D_1(Q_{11}+c)D_2Q_{21}+\gamma_1D_2Q_{21}D_2Q_{21}\\
    &+c_{22}D_1Q_{11}D_1Q_{11}+c_{23}D_1Q_{11}D_2(Q_{21}+c)+\gamma_1D_2(Q_{21}+c)D_2(Q_{21}+c). 
  \end{align*}
  Then use Lemma \ref{sq_dis} for each line. 
\end{proof}

For $u$ defined on interior nodes, denote its summation over all interior nodes as $\sum_P u_P$.
For boundary nodes, they belong to some boundary cells.
When specifying a boundary cell $\sigma$, the normal derivative has a definite meaning of whether it is $D^x_{\bfn}$ or $D^y_{\bfn}$, so we may denote it as $D_{\bfn}$.
Then, we denote by $u_{\sigma}$ the average of $u$ on the two endpoints of a cell $\sigma$, and by $\sum_{\sigma} u_{\sigma}$ the summation of $u$ over all boundary cells. 
For $v$ defined on interior cells, denote its summation over all cells as $\sum_\Gamma v_\Gamma$.

Let $\Delta_h,\,\Delta_h^2$ denote $D_{11}+D_{22}$ and $D_{1111}+2D_{1122}+D_{2222}$, respectively.
The summation by parts below can be established. 
\begin{lemma}\label{sumparts_first}
If $u$ takes zero on all boundary nodes $(l=0,N\,\mathrm{or}\,m=0,N)$, then
we have
\begin{align*}
    & -\sum_P{(uD_{11}v)}_P=\sum_\Gamma{(D_1uD_1v)}_\Gamma,                                 \nonumber\\
    & -\sum_P{(uD_{12}v)}_P=\sum_\Gamma{(D_1uD_2v)}_\Gamma=\sum_\Gamma{(D_2uD_1v)}_\Gamma.
\end{align*}
\end{lemma}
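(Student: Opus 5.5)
The plan is to prove both identities by expanding the cell sums on the right-hand side and regrouping the terms node by node. This mirrors how $D_{11}$ and $D_{12}$ were constructed in the paper: they were obtained by differentiating $\sum_\Gamma (D_1uD_1v)_\Gamma$ and $\sum_\Gamma(D_1uD_2v)_\Gamma$ with respect to a nodal value. The key observation is that each cell quantity is bilinear in $(u,v)$ and linear in the nodal values of $u$. Hence
\begin{equation*}
\sum_\Gamma (D_1uD_1v)_\Gamma=\sum_{\text{all nodes } P} u_P\,\frac{\partial}{\partial u_P}\sum_\Gamma (D_1uD_1v)_\Gamma .
\end{equation*}
Since $u=0$ on all boundary nodes, only the interior nodes $1\le l,m\le N-1$ contribute. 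For such an interior node, the cells containing $(l,m)$ are the four cells $\Gamma_{l-1,m-1},\Gamma_{l,m-1},\Gamma_{l-1,m},\Gamma_{l,m}$, and all of them lie inside $[0,L]^2$. Therefore no ghost values enter, and the derivative can be computed exactly as in the derivation of \eqref{D11u}.

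\textbf{First identity.} Collect the coefficient of $u_{l,m}$ in \eqref{D_1D_1}. Node $(l,m)$ appears in the horizontal edge differences along row $m$ with neighbours $(l\pm1,m)$, and each such edge is shared by two cells ($\Gamma_{l,m-1}$ and $\Gamma_{l,m}$ for the right edge), each carrying weight $1/2$. The coefficient is therefore
\begin{equation*}
\frac{1}{h^2}\big[(v_{l,m}-v_{l-1,m})-(v_{l+1,m}-v_{l,m})\big]=-(D_{11}v)_{l,m}.
\end{equation*}
Summing over interior nodes gives $-\sum_P (uD_{11}v)_P$.

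\textbf{Second identity.} Do the same for $(D_1uD_2v)_\Gamma$. The coefficient of $u_{l,m}$ from cell $\Gamma_{l,m}$, where the node is lower-left, is $-\frac{1}{4h^2}(v_{l+1,m+1}-v_{l+1,m}+v_{l,m+1}-v_{l,m})$, and there are analogous contributions with the appropriate signs from the other three cells. Summing the four, the terms $v_{l\pm1,m}$, $v_{l,m\pm1}$ and $v_{l,m}$ should cancel pairwise, leaving
\begin{equation*}
\frac{1}{4h^2}\big(-v_{l+1,m+1}-v_{l-1,m-1}+v_{l-1,m+1}+v_{l+1,m-1}\big)=-(D_{12}v)_{l,m}.
\end{equation*}
For $\sum_\Gamma (D_2uD_1v)_\Gamma$, the same computation with the roles of the two directions swapped gives $-(D_{21}v)_{l,m}$, which equals $-(D_{12}v)_{l,m}$ because the stencil is symmetric under $x\leftrightarrow y$.

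The only real obstacle is the sign and weight bookkeeping in the four-cell sum for the mixed term, in particular checking that the cancellation of the non-diagonal neighbours is exact. I would organize this as a table of the four cells, listing the sign of $u_{l,m}$ in each $D_1$ factor and the corresponding $D_2v$ combination. A secondary point to state explicitly is why no boundary terms arise: boundary nodes have $u=0$, so their cell contributions, whatever the $v$ values there, are multiplied by zero.
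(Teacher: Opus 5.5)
Your proposal is correct and is essentially the paper's own argument. The paper defines $D_{11}$ and $D_{12}$ exactly by differentiating the cell sums with respect to a single nodal value. The row-wise identity \eqref{m} in the appendix is the same coefficient collection written as discrete summation by parts, and its boundary terms drop out because $u$ vanishes on the boundary nodes. Your four-cell bookkeeping for the mixed term is also right: the cancellation you anticipate is exact and gives precisely the stated stencil for $-(D_{12}v)_{l,m}$, both for $D_1uD_2v$ and for $D_2uD_1v$.
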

\begin{lemma}\label{sumparts_bnd}
  For any $u, v$, it holds
  \begin{equation}\label{Dkc}
    h^2\sum_{\Gamma}{(D_k u D_k v)}_{\Gamma}=h\sum_{\sigma} {(uD_{\bfn }v)}_{\sigma}-h^2\sum_\Gamma{(u\Delta_h v)}_\Gamma,
  \end{equation}
  If $v$ satisfies $v|_{\partial\Omega}=0$, it also holds
  \begin{equation}\label{Dlc}
    h^2\sum_{\Gamma}{(\Delta_h u\Delta_h v)}_{\Gamma}=h\sum_{\sigma} {(\Delta_h uD_{\bfn }v)}_{\sigma}+h^2\sum_P{(v\Delta^2_h u)}_P.
  \end{equation}
\end{lemma}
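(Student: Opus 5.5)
The plan is to reduce both identities to one-dimensional summation by parts along grid lines. The cell sums in \eqref{D_1D_1} and \eqref{D11uD11v} are averages of nodal or edge quantities, so each can be rewritten as a weighted sum over nodes, with weight $1$ at interior nodes and $1/2$ at boundary nodes (corner weights handled the same way). Each identity then splits into its $x$- and $y$-contributions, and it suffices to treat the $x$-direction ($k=1$, $D_{11}$, $D_{1111}$, $D^x_{\bfn}$) on a single row $m$. The $y$-direction is symmetric, and the mixed term $D_{11}uD_{22}v$ in \eqref{Dlc} is handled the same way, one direction at a time.

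For \eqref{Dkc}, I would first use \eqref{D_1D_1} to write $h^2\sum_\Gamma (D_1uD_1v)_\Gamma$ as a sum over horizontal edges $\sum_{l=0}^{N-1}(u_{l+1,m}-u_{l,m})(v_{l+1,m}-v_{l,m})$. Each row carries weight $1$ for interior $m$ and $1/2$ for $m=0,N$. Abel summation along the row then gives $-\sum_{l}u_{l,m}(v_{l+1,m}-2v_{l,m}+v_{l-1,m})$ over interior $l$, together with the endpoint terms $u_{0,m}(v_{0,m}-v_{1,m})$ and $u_{N,m}(v_{N,m}-v_{N-1,m})$. Next I rewrite each endpoint term by \eqref{D11_0m}: $-(v_{1,m}-v_{0,m}) = h(D^x_{\bfn}v)_{0,m}-\tfrac{h^2}{2}(D_{11}v)_{0,m}$, which uses the ghost value $v_{-1,m}$. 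The Laplacian pieces merge with the interior ones and produce exactly the half-weight on boundary nodes that appears in $h^2\sum_\Gamma (u\Delta_h v)_\Gamma$. The normal-derivative pieces assemble into $h\sum_\sigma (uD_{\bfn}v)_\sigma$, since each boundary node is shared by two boundary cells with weight $1/2$ each.

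For \eqref{Dlc}, I would apply \eqref{Dkc} with $u$ replaced by $\Delta_h u$, after first expressing $h^2\sum_\Gamma(\Delta_h u\Delta_h v)_\Gamma$ through the nodal averages. More directly, I expand $\sum_\Gamma(\Delta_hu\,\Delta_hv)_\Gamma$ as a weighted nodal sum of $(\Delta_h u)_P(\Delta_h v)_P$ and substitute the definition of $D_{11}v$ at each node. At interior nodes, a discrete summation by parts moves the second differences onto $\Delta_h u$ and produces $D_{1111}$, $D_{1122}$, $D_{2222}$ applied to $u$ at interior nodes. The terms involving $v_{0,m}$ vanish because $v|_{\partial\Omega}=0$. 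At boundary nodes, $(D_{11}v)_{0,m}$ is given by \eqref{D11_0m} with $v_{0,m}=0$, namely $\tfrac{2}{h}(D^x_{\bfn}v)_{0,m}+\tfrac{2}{h^2}v_{1,m}$. The first part, weighted by $\tfrac12 h^2(\Delta_h u)_{0,m}$, yields the boundary term $h\sum_\sigma(\Delta_h u D_{\bfn}v)_\sigma$. The second part is exactly the contribution that, as remarked after \eqref{D11_0m}, turns the node-$(1,m)$ stencil into the full \eqref{D1111}.

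The main obstacle is the bookkeeping near the boundary and corners. I must check that the half-weights of boundary nodes and the quarter-weights of corners in the cell averages match exactly the ghost-point definition \eqref{D_n}. This is needed so that the residual at nodes $l=1,N-1$ reproduces $(\Delta_h^2u)_P$ with no leftover terms, and so that the mixed $D_{11}D_{22}$ boundary contributions pair correctly with $D^y_{\bfn}$. I would verify this first on a single row (a 1D identity), and then check the corner cell explicitly.
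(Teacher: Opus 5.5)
Your proposal is correct and is essentially the paper's proof. For \eqref{Dkc} you do exactly its row-by-row Abel summation and then rewrite the endpoint terms with \eqref{D11_0m}. For \eqref{Dlc}, your nodal bookkeeping is only a reordering of the paper's directional summation by parts over the four shifted corner sums; one small correction is that the leftover $v_{1,m}$ term completes the whole $\Delta_h^2$ stencil at $(1,m)$, including the $D_{1122}$ part through $(D_{22}u)_{0,m}$, not just $D_{1111}$. The shortcut you mention first also closes cleanly: apply \eqref{Dkc} to the pair $(\Delta_h u, v)$ and then again to $(v,\Delta_h u)$, where the boundary sum and all boundary-node contributions vanish because $v|_{\partial\Omega}=0$.
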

The proof is left to Appendix.

Now we are able to write down a splitting similar to \eqref{splitting}.
Define 
\begin{equation}\nonumber
  \hat{E}(c,Q)=h^2\sum_P{\big(\qeN(c,Q)\big)}_P+\hat{E}_+(c,Q)-\hat{E}_-(c,Q)+A,
\end{equation}
with 
\begin{align*}
    \hat{E}_+(c,Q)= & h^2\sum_\Gamma \Big(c_{20}D_icD_ic+c_{21}D_iQ_{jk}D_iQ_{jk}+c_{22}D_iQ_{ij}D_kQ_{kj}+c_{23}D_jcD_iQ_{ij}\\
    &\quad +c_{40}(\Delta_h c)^2\Big)_\Gamma+h^2\sum_P{\big(\gamma c^2\big)}_P,  \\
    \hat{E}_-(c,Q)= & h^2\sum_P{\big(-(c_{00}-\gamma)c^2-c_{02}Q^2\big)}_P,
\end{align*}
and $A$ is a constant determined by the value of $ c $ and $ Q $ on the boundary nodes.

\begin{proposition}\label{dis_cvx}
  $\hat{E}_+(c,Q)$ is convex w.r.t. $c_{l,m},Q_{l,m},(D_{\bfn}c)_{l,m}$. 
\end{proposition}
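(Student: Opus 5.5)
The proof will mirror Proposition \ref{gamma_convex} at the discrete level. Since $\hat{E}_+$ is a quadratic form in the variables $c_{l,m},Q_{l,m}$ at interior nodes and the boundary normal derivatives $(D_{\bfn}c)$, plus terms linear in them coming from the fixed boundary values, convexity is equivalent to nonnegativity of the purely quadratic part. Hence I will show that $\hat{E}_+$, evaluated on a difference $(c,Q)$ vanishing on boundary nodes (with $D_{\bfn}c$ free), is nonnegative, up to linear terms.

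First, I split off the coupling terms as in \eqref{g_e}. With $\gamma_1=c_{23}^2/(4c_{22})$, Corollary \ref{dis_spd} gives that $(c_{22}D_iQ_{ij}D_kQ_{kj}+c_{23}D_jcD_iQ_{ij}+\gamma_1D_jcD_jc)_\Gamma\ge0$ on each cell. Also $c_{21}D_iQ_{jk}D_iQ_{jk}\ge0$ trivially, since by \eqref{D_1D_1} each product $D_kuD_ku$ on a cell is a sum of squares. What remains is
\begin{equation*}
h^2\sum_\Gamma\big(c_{40}(\Delta_hc)^2+(c_{20}-\gamma_1)D_icD_ic\big)_\Gamma+h^2\sum_P\gamma c_P^2 .
\end{equation*}
If $c_{20}\ge\gamma_1$, every term is a nonnegative quadratic and we are done.

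Otherwise I use Lemma \ref{sumparts_bnd}, identity \eqref{Dkc}, with $u=v=c$. This gives $h^2\sum_\Gamma D_kcD_kc=h\sum_\sigma(cD_{\bfn}c)_\sigma-h^2\sum_\Gamma(c\Delta_hc)_\Gamma$. Since $c$ on boundary nodes is the prescribed data, the boundary sum is linear in $D_{\bfn}c$ and does not affect convexity. The remaining cell sum is then
\begin{equation*}
h^2\sum_\Gamma\big(c_{40}(\Delta_hc)^2-(c_{20}-\gamma_1)c\,\Delta_hc\big)_\Gamma+h^2\sum_P\gamma c_P^2 .
\end{equation*}
By \eqref{D11uD11v}, a cell quantity $(uv)_\Gamma$ is the average of nodal values over the four corners. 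Each interior node is shared by four cells, so it receives total weight one, and I rewrite the sum as $h^2\sum_{\text{nodes}}w_P\big(c_{40}(\Delta_hc)_P^2-(c_{20}-\gamma_1)c_P(\Delta_hc)_P\big)$. Here $w_P=1$ at interior nodes and $w_P\in\{1/2,1/4\}$ at boundary nodes. At interior nodes I add $\gamma c_P^2$ and use $4\gamma c_{40}\ge(c_{20}-\gamma_1)^2$, the hypothesis on $\gamma$ from Proposition \ref{gamma_convex}, to get a pointwise nonnegative quadratic form in $(c_P,(\Delta_hc)_P)$. At boundary nodes $c_P$ is fixed, so the cross term is linear in $(\Delta_hc)_P$ and the $c_{40}$ term is a nonnegative square.

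The point needing care is that $(\Delta_hc)_P$ at boundary nodes involves ghost values. Via \eqref{D_n} and \eqref{D11_0m} I express it affinely in $D_{\bfn}c$ and interior/boundary node values, so it remains an affine function of the chosen independent variables. A square of an affine function is convex, and an affine function times fixed data is linear, so these contributions preserve convexity.

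The main obstacle I expect is the bookkeeping of node weights at boundary and corner nodes. In particular, the $\gamma c^2$ sum only runs over interior nodes, so the completing-the-square step must only be applied where weight one is available; this works because boundary $c$ is frozen. I also need to check that the interpretation of $(\Delta_hc)_\Gamma$ and $(c\Delta_hc)_\Gamma$ in Lemma \ref{sumparts_bnd} matches this corner averaging, so that the rewriting above is exact.
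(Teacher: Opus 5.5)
Your proposal is correct and follows the paper's own route. The paper's one-line proof has three steps, and you take the same three. First, you split off the $c_{22}$--$c_{23}$ block using Corollary \ref{dis_spd} with $\gamma_1=c_{23}^2/(4c_{22})$. Second, you convert $(c_{20}-\gamma_1)D_icD_ic$ via the summation by parts \eqref{Dkc} into a boundary term that is linear in $D_{\bfn}c$ plus $-(c_{20}-\gamma_1)c\,\Delta_hc$. Third, you complete the square with $\gamma c^2$ as in Proposition \ref{gamma_convex}. Your node-weight bookkeeping supplies the detail the paper leaves implicit: interior nodes carry weight one, while boundary nodes have frozen $c$ and contribute only squares of affine functions of $D_{\bfn}c$ plus affine terms.
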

\begin{proof}
  Follow the same procedure as Proposition \ref{gamma_convex}, where Corollary \ref{dis_spd} and the summation by parts \eqref{Dkc} is necessary. 
\end{proof}

Define
\begin{equation}\nonumber
  \begin{aligned}
    L_c[c,Q]        & =2\gamma c-2c_{20}D_{kk}c-c_{23}D_{ij}Q_{ij}+2c_{40}\Delta_h^2c,  \\
    L_{Q_{ij}}[c,Q] & =-2c_{21}D_{kk}Q_{ij}-2c_{22}D_{ik}Q_{kj}-c_{23}D_{ij}c.
  \end{aligned}
\end{equation}
The definition is given on each node.
Hereafter we omit the index $(l,m)$ when it is unnecessary to specify. 
The above discretizations keep the structures of the energy variation given by \eqref{delta_E}. 
\begin{proposition}\label{dis_diff}
The differential of the energy is given by 
\begin{equation}\nonumber
  \begin{aligned}
    \md \hat{E}= & h^2\sum_P{\bigg(\Big(\frac{\partial \qeN(c,Q)}{\partial c}+2(c_{00}-\gamma)c+L_c[c,Q]\Big)\md c\bigg)}_P+2c_{40}h\sum_{\sigma}{(\Delta_h c \md D_{\bfn } c)}_{\sigma} \\
    & +h^2\sum_P{\bigg(\Big(\frac{\partial \qeN(c,Q)}{\partial Q}+2c_{02}Q_{ij}+L_{Q_{ij}}[c,Q]\Big)\md Q_{ij}\bigg)}_P.
  \end{aligned}
\end{equation}
\end{proposition}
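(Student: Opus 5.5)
We differentiate $\hat E$ term by term, treating $c_{l,m},Q_{l,m}$ at interior nodes $1\le l,m\le N-1$ and the boundary normal derivatives $(D_{\bfn}c)_\sigma$ as the independent variables. Boundary values are fixed, so their variations vanish, and the constant $A$ drops out. For the bulk contributions $h^2\sum_P(\qeN+\gamma c^2+(c_{00}-\gamma)c^2+c_{02}|Q|^2)_P$ the differential is pointwise: it gives $(\partial\qeN/\partial c+2(c_{00}-\gamma)c+2\gamma c)\md c$ and $(\partial\qeN/\partial Q_{ij}+2c_{02}Q_{ij})\md Q_{ij}$ at each interior node. The term $2\gamma c$ is exactly the first entry of $L_c$.

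For the quadratic first-order terms, each cell term is bilinear, so $\md(D_kuD_kv)_\Gamma=(D_k\md u\,D_kv+D_ku\,D_k\md v)_\Gamma$, and likewise for the mixed products. Since the variations $\md c,\md Q$ vanish on boundary nodes, Lemma \ref{sumparts_first} applies with $u=\md c$ or $u=\md Q_{ij}$. It turns $h^2\sum_\Gamma$ of these differentials into $-h^2\sum_P(\md u\,D_{11}v)_P$ and $-h^2\sum_P(\md u\,D_{12}v)_P$ forms, which produce the terms $-2c_{20}D_{kk}c$, $-2c_{21}D_{kk}Q_{ij}$, $-2c_{22}D_{ik}Q_{kj}$ and the two $c_{23}$ cross terms $-c_{23}D_{ij}Q_{ij}$, $-c_{23}D_{ij}c$. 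For the $c_{23}$ term $D_jc\,D_iQ_{ij}$ we need the two equalities in Lemma \ref{sumparts_first}, $\sum(D_1uD_2v)=\sum(D_2uD_1v)$, so that the variations in $c$ and in $Q$ both land on $D_{12}$ symmetrically. For $c_{22}$ we apply the same identity with indices paired as $D_iQ_{ij}D_kQ_{kj}$; its variation gives $2D_{ik}Q_{kj}$ after symmetrizing.

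The remaining and main step is the $c_{40}$ term, $\md\,h^2\sum_\Gamma c_{40}(\Delta_hc)^2=2c_{40}h^2\sum_\Gamma(\Delta_hc\,\Delta_h\md c)_\Gamma$. Here the variation $\md c$ includes $\md(D_{\bfn}c)$ at boundary nodes through the ghost-point relation \eqref{D11_0m}. We apply \eqref{Dlc} with $u=c$ and $v=\md c$, which is legitimate because $\md c|_{\partial\Omega}=0$. In that identity $D_{\bfn}v$ is exactly $\md D_{\bfn}c$, so we get $2c_{40}h\sum_\sigma(\Delta_hc\,\md D_{\bfn}c)_\sigma+2c_{40}h^2\sum_P(\Delta_h^2c\,\md c)_P$. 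The delicate point is consistency. We must check that, once the ghost values are eliminated in favor of the independent variables $(D_{\bfn}c)$ via \eqref{D_n}–\eqref{D11_0m}, the nodes with $l,m\in\{1,N-1\}$ still yield the full stencil \eqref{D1111}. That is precisely what Lemma \ref{sumparts_bnd} (proved in the Appendix) encodes, so we invoke it rather than redo the boundary bookkeeping.

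Collecting all interior-node coefficients of $\md c$ and $\md Q_{ij}$ reproduces $L_c$ and $L_{Q_{ij}}$, and the boundary sum gives the stated term, which proves the formula.
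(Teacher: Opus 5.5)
Your proposal is correct and follows essentially the paper's route. The paper obtains the interior-node coefficients "by construction", since the discrete operators were defined as derivatives of the cell-summed energy; you make this explicit through Lemma \ref{sumparts_first} with $u=\md c,\md Q_{ij}$ vanishing on boundary nodes. The boundary term comes from \eqref{Dlc} with $u=c$, $v=\md c$, exactly as in the paper, and your remark that $D_{\bfn}(\md c)=\md(D_{\bfn}c)$, once ghost values are expressed through the independent normal-derivative variables, states a point the paper leaves implicit.
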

\begin{proof}
  For interior points, the differential is given by our construction. 
  For the boundary normal derivatives, only the $\Delta_hc\Delta_hc$ term is involved.
  Using the summation by parts \eqref{Dlc} and the fact that $\md c=0$ on boundary nodes, we have
  \begin{equation}
    \nonumber
    h^2\md\sum_{\Gamma}{(\Delta_h c\Delta_h c)}_{\Gamma}=2h\sum_{\sigma} {(\Delta_h c\md D_{\bfn }c)}_{\sigma}+2h^2\sum_P{(\Delta^2_h c\md c)}_P.
  \end{equation}
\end{proof}

Now we are ready to write down the fully discretized scheme, 
\begin{align}
  & \frac{c^{n+1}-c^{n}}{\delta t}=-\hat{\scrG }\Big(\frac{\partial \qeN(c^{n+1},Q^{n+1})}{\partial c^{n+1}}+2(c_{00}-\gamma)c^{n}+L_c[c^{n+1},Q^{n+1}]\Big),  \label{discrete_1}\\
  & \frac{Q_{ij}^{n+1}-Q_{ij}^{n}}{\delta t}=-\scrP \Bigg({\left(\frac{\partial \qeN(c^{n+1},Q^{n+1})}{\partial Q^{n+1}}\right)}_{ij}+2c_{02}Q_{ij}^n+L_{Q_{ij}}[c^{n+1},Q^{n+1}]\Bigg), \label{discrete_2}\\
  & \frac{D_{\bfn }c^{n+1}-D_{\bfn }c^{n}}{\delta t}=-\Delta_h c^{n+1},\quad\mathrm{on\, \partial\Omega},  \label{discrete_3}
\end{align}
where the projection operator $\scrG $ is also discretized as $\hat{\scrG }$, satisfying
\begin{equation*}
  \hat{\scrG }A=A-{1\over \# P}\sum_{P}A_P,
\end{equation*}
where $\# P$ is the number of the interior nodes.
By the midpoint rule of integral, $\scrG$ also yields a truncation error of $O(h^2)$.

Since the above full discretized scheme is consistent with the time discretized
scheme \eqref{t_discrete_c}--\eqref{t_discrete_bnd}, they satisfy the same properties and the proofs are
also similar. In the following, we only briefly discuss the energy dissipation
and error estimation.

\begin{theorem}
  For any $\delta t$ and $h$, the scheme \eqref{discrete_1}--\eqref{discrete_3} has a unique solution. Moreover, the scheme satisfies the discrete energy law,
  \begin{equation}\nonumber 
    \begin{aligned}
       & \hat{E}[c^{n+1},Q^{n+1}]+h^2\Big[\frac{1-(c_{00}-\gamma)\delta t}{\delta t}\sum_{P}{(c^{n+1}-c^n)}_P^2+\frac{1-c_{02}\delta t}{\delta t}\sum_{P}{(Q_{ij}^{n+1}-Q_{ij}^n)}_P^2\Big] \\
       & +\hat{E}_+(c^{n+1}-c^n,Q^{n+1}-Q^n)+2c_{40}\frac{h}{\delta t}\sum_{\sigma}{(D_{\bfn}c^{n+1}-D_{\bfn}c^n)}_{\sigma}^2\leq \hat{E}[c^{n},Q^{n}].
    \end{aligned}
  \end{equation}
\end{theorem}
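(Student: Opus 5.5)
The plan mirrors the proof of Theorem \ref{energylaw}, now in finite dimensions, where the compactness issues disappear. The unknowns are the interior nodal values $c_P,Q_P$ ($P$ interior), plus the ghost values, or equivalently the discrete normal derivatives $D_{\bfn}c$ on boundary nodes, which we treat as independent variables as in the construction of \eqref{D11_0m}.

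\textbf{Existence.} Introduce the discrete functional
\begin{align*}
\hat F[c,Q]={}&h^2\sum_P\big(\qeN(c,Q)\big)_P+\hat E_+(c,Q)+h^2\sum_P\big(2(c_{00}-\gamma)c^n c+2c_{02}Q^n\cdot Q\big)_P\\
&+\frac{h^2}{2\delta t}\sum_P\big((c-c^n)^2+|Q-Q^n|^2\big)_P+\frac{c_{40}h}{\delta t}\sum_\sigma\big(D_{\bfn}c-D_{\bfn}c^n\big)_\sigma^2 ,
\end{align*}
minimized over the affine set $h^2\sum_P c_P=h^2\sum_P c^n_P$, with $Q$ symmetric traceless. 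Strict convexity follows from three facts: Theorem \ref{qe_properties} applied nodewise, Proposition \ref{dis_cvx} (convexity of $\hat E_+$ jointly in $c,Q,D_{\bfn}c$), and the strictly convex quadratic penalties.

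Lower boundedness and coercivity come from three ingredients. First, $\qeN$ is bounded below. Second, $(c,Q)$ lies in the bounded set $\overline{\cQdomM}$ nodewise. Third, the only unbounded direction is $D_{\bfn}c$; there $\hat E_+$ is bounded below by a linear term in $D_{\bfn}c$, by the discrete analogue of \eqref{g_e_part2} using Corollary \ref{dis_spd} and \eqref{Dkc}. That linear term is dominated by the quadratic boundary penalty.

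The admissible set is nonempty with finite $\hat F$: take nodal values of $(\underline c,\underline Q)$ from \eqref{cond_bnd}, or simply the constant state, after adjusting mass. A minimizing sequence therefore stays bounded, and a subsequence converges. Lower semicontinuity of $\qeN$ on $\overline{\cQdomM}$ (Theorem \ref{qe_properties}) then shows the limit is a minimizer.

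\textbf{Interior of the domain.} This is the main obstacle: showing the minimizer satisfies $(c_P,Q_P)\in(\cQdomM)^{\circ}$ at every interior node, so that the derivatives in \eqref{discrete_1}--\eqref{discrete_2} make sense. Nodes with $|\lambda(Q)|=c/2>0$ are excluded because $\hat F$ would be $+\infty$. For nodes with $c_P=0$ or $c_P=M$, repeat the perturbation argument in the proof of Theorem \ref{energylaw}. Set $(\tilde c,\tilde Q)=\xi(\underline c,\underline Q)+(1-\xi)(\bar c,\bar Q)$, with $(\underline c,\underline Q)$ strictly interior at every node and with the same mass. Convexity bounds all contributions by $A\xi$, while the $c\log c$ and $(M-c)\log(1-c/M)$ terms at the offending nodes contribute $\xi A_0\log(\xi M)$ with $A_0>0$. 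Taking $\xi$ small gives a contradiction. The sum is finite, so this is in fact simpler than the a.e. version.

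With the minimizer interior, set the differential of $\hat F$ to zero. By Proposition \ref{dis_diff}, the differential reproduces exactly \eqref{discrete_1}--\eqref{discrete_3}. The mass constraint yields $\hat\scrG$, the restriction to symmetric traceless $Q$ yields $\scrP$, and the variation in $D_{\bfn}c$ yields \eqref{discrete_3}.

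\textbf{Uniqueness.} Given two solutions, subtract them and take discrete inner products with $\alpha=c^{n+1,1}-c^{n+1,2}$, $U$, and $D_{\bfn}\alpha$ (with weight $2c_{40}h$). Lemma \ref{sumparts_first} and Lemma \ref{sumparts_bnd} with $\alpha=0$, $U=0$ on boundary nodes turn the linear parts into $2\hat E_+(\alpha,U)$ plus the boundary term. Here $\hat E_+(\alpha,U)\ge0$ by Proposition \ref{dis_cvx} with zero boundary data. The boundary term is handled by \eqref{Dlc}, which cancels against \eqref{discrete_3}. Corollary \ref{cor-q} gives nonnegativity of the $\qeN$ difference, so $\alpha=0$, $U=0$ and $D_{\bfn}\alpha=0$.

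\textbf{Energy law.} Take inner products of \eqref{discrete_1}--\eqref{discrete_3} with $c^{n+1}-c^n$, $Q^{n+1}-Q^n$, and $2c_{40}h(D_{\bfn}c^{n+1}-D_{\bfn}c^n)$. Use the following:
\begin{itemize}
\item the second inequality of Corollary \ref{cor-q} for the $\qeN$ terms;
\item the polarization identity $u_1(v_1-v_2)+v_1(u_1-u_2)=u_1v_1-u_2v_2+(u_1-u_2)(v_1-v_2)$ for the quadratic parts;
\item summation by parts \eqref{Dkc}, \eqref{Dlc} to move $\Delta_h^2$ onto $\Delta_h\Delta_h$, producing the boundary term that cancels with the $D_{\bfn}$ equation.
\end{itemize}
The $\hat\scrG$ and $\scrP$ projections drop out because $c^{n+1}-c^n$ has zero sum and $Q$ stays symmetric traceless. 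This yields the stated inequality, with $\hat E_+(c^{n+1}-c^n,Q^{n+1}-Q^n)$ appearing from the polarization identity.
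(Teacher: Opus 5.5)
Your proposal is correct and follows essentially the same route as the paper. You minimize the strictly convex, lower-bounded functional $\hat F$ over the mass-constrained set, then use lower semicontinuity of $\qeN$ and the $\xi$-perturbation from Theorem~\ref{energylaw} to place the minimizer in $(\cQdomM)^{\circ}$ at every node. From there you recover \eqref{discrete_1}--\eqref{discrete_3} through Proposition~\ref{dis_diff}, and get the energy law by testing with the increments and using Corollary~\ref{cor-q} with Lemmas~\ref{sumparts_first} and~\ref{sumparts_bnd}. Your explicit monotonicity argument for uniqueness of scheme solutions and your strictly interior constant comparison state are simply the finite-dimensional versions of what the paper does, or leaves implicit, in the proof of Theorem~\ref{energylaw}.
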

\begin{proof}
  The proof follows a similar procedure as Theorem~\ref{energylaw}.
  Consider the function of $c^{n+1}_{l,m}$, $Q^{n+1}_{l,m}$, $(D_{\bfn}c)_{l,m}$, 
  \begin{equation}\nonumber
    \begin{aligned}
      \hat{F}[c^{n+1}, & Q^{n+1}]=\frac{1}{2\delta t}\bigg(h^2\sum_{P}\Big({\left(c^{n+1}-c^n\right)}^2_P+{\left(Q_{ij}^{n+1}-Q_{ij}^n\right)}^2_P\Big) +2c_{40}h\sum_{\sigma}(D_{\bfn}c^{n+1}-D_{\bfn}c^n)_{\sigma}^2\bigg)\\
                       & +h^2\left(2(c_{00}-\gamma){\left(c^{n}\cdot c^{n+1}\right)}_P+2c_{02}{\left(Q_{ij}^n\cdot Q_{ij}^{n+1}\right)}_P+\sum_P{\big(\qeN(c^{n+1},Q^{n+1})\big)}_P\right)+\hat{E}_+\left(c^{n+1},Q^{n+1}\right),
  \end{aligned}\end{equation}
where $c^{n+1}_{l,m}$ satisfies
\begin{equation*}
\sum_{P} c^{n+1}_P = \sum_{P} c^n_P.
\end{equation*}
  To establish that it is strictly convex and lower bounded, we need Corollary \ref{dis_spd} and Proposition \ref{dis_cvx}.
  Therefore it possesses a unique minimizer such that $(c^{n+1},Q^{n+1})_{l,m}\in \overline{\cQdomM}$. Then, we are able to verify that the minimizer must be taken with $(c^{n+1},Q^{n+1})_{l,m}\in (\cQdomM)^{\circ}$ by a similar approach in Theorem \ref{energylaw}.
  It enables us to take derivatives of discrete variables to arrive at 
  \begin{equation*}
    \hat{\scrG } \Big(\frac{\partial \hat{F}}{\partial {(c^{n+1})}_{l,m}}\Big)=0,\quad\scrP \Big(\frac{\partial \hat{F}}{\partial {(Q_{ij}^{n+1})}_{l,m}}\Big)=0,\quad \frac{\partial \hat{F}}{\partial {(D_{\bfn}c^{n+1})_{l,m}}}=0. 
  \end{equation*}
  which is exactly \eqref{discrete_1}--\eqref{discrete_3} using Proposition \ref{dis_diff}.

  The energy law can be established by taking the dot product of \eqref{discrete_1}--\eqref{discrete_3} with $c^{n+1}-c^n$, $Q_{ij}^{n+1}-Q_{ij}^n$, $D_{\bfn }c^{n+1}-D_{\bfn }c^{n}$, respectively, followed by taking the $\sum_P$ and $\sum_{\sigma}$.
  The summation by parts, Lemma \ref{sumparts_first} and \ref{sumparts_bnd} are needed. 
\end{proof}

Define the error for the full discretization as
\begin{equation}\label{epsilon}
  \varepsilon^n\triangleq h^2\sum_P\Big(|\errc ^n|_P^2+|R^n|_P^2\Big)+h\sum_{\sigma}{\big(D_{\bfn }\errc ^n\big)}_{\sigma}^2.
\end{equation}
\begin{theorem}
  Assume that $c(\mr,t)\in C^6(\Omega\times[0,T]), Q(\mr,t)\in C^4(\Omega\times[0,T])$. The error of the scheme \eqref{discrete_1}--\eqref{discrete_3} has the estimate
  \begin{equation}\nonumber 
    \varepsilon^n\leq C(\delta t^2+h^4).
  \end{equation}
  The constant $ C $ depends on $T$, the coefficients $c_{02},c_{40}$, and the maximum derivatives of $c,Q_{ij}$ in $\Omega\times[0,T]$.
\end{theorem}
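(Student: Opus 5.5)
The plan is to mimic the semi-discrete error analysis, replacing integrals by the discrete sums $h^2\sum_P$, $h\sum_\sigma$, $h^2\sum_\Gamma$. Integration by parts is replaced by the summation-by-parts identities of Lemma \ref{sumparts_first} and Lemma \ref{sumparts_bnd}, and Lemma \ref{trunc} supplies the spatial consistency.

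\textbf{Step 1 (error equations).} Evaluate the exact solution at the nodes. For the ghost values, use a smooth extension of $c$ to $\Omega_h$, so that the exact discrete normal derivative $D_{\bfn}c(t^n)$ makes sense and matches $\partial_{\bfn}c$ up to $O(h^2)$. The exact solution then satisfies \eqref{discrete_1}--\eqref{discrete_3} up to residuals $T_c^n,T^n,T^n_{\partial\Omega}$ and the lagged terms $\theta_c^n,\Theta^n$. The residuals split into two parts:
\begin{itemize}
\item a temporal part, estimated exactly as in the semi-discrete theorem;
\item a spatial part of size $O(h^2)$, coming from Lemma \ref{trunc} for $D_{11},D_{12},D_{1111},D_{1122}$ and $D_{\bfn}$, and from the midpoint-rule error of $\hat{\scrG}$ versus $\scrG$.
\end{itemize}
This is where $c\in C^6$ and $Q\in C^4$ enter.

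Subtracting the scheme gives equations for $\errc^n,R^n$ at interior nodes and for $D_{\bfn}\errc^n$ on boundary nodes. We have $\errc^{n+1}=R^{n+1}=0$ on boundary nodes by the Dirichlet data.

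\textbf{Step 2 (energy-type estimate).} Take the discrete inner products of the three error equations with $\errc^{n+1}$, $R^{n+1}$ and $2c_{40}D_{\bfn}\errc^{n+1}$, using $h^2\sum_P$ for the first two and $h\sum_\sigma$ for the third, then add. The projections drop out: $\sum_P\errc^{n+1}_P=0$ by discrete mass conservation, once the exact solution's interior sum is matched up to $O(h^2)$ (absorbed into the residual), and $R^{n+1}$ is symmetric traceless.

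\begin{itemize}
\item \emph{Quasi-entropy terms.} They are nonnegative pointwise by Corollary \ref{cor-q}. This requires both arguments to lie in $(\cQdomM)^\circ$, which holds for the numerical solution by the preceding theorem and for the exact solution by assumption.
\item \emph{Linear operator terms.} By Lemma \ref{sumparts_first} and \eqref{Dkc}--\eqref{Dlc}, applied with $\errc^{n+1}$ vanishing on the boundary, they become
\[
-\hat{E}_+(\errc^{n+1},R^{n+1})-\text{(boundary term)},
\]
where the boundary term is $2c_{40}h\sum_\sigma(\Delta_h\errc^{n+1}D_{\bfn}\errc^{n+1})_\sigma$. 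Here $\hat E_+(\errc^{n+1},R^{n+1})\ge0$ by Proposition \ref{dis_cvx} and Corollary \ref{dis_spd}, since the linear boundary term in the convexity proof vanishes for zero boundary data.
\item \emph{Cancellation.} The boundary term cancels exactly against the term produced by the boundary equation \eqref{discrete_3}, which contributes $-\Delta_h\errc^{n+1}$ tested with $D_{\bfn}\errc^{n+1}$. This discrete cancellation is the crux, and it is guaranteed by Proposition \ref{dis_diff}.
\item \emph{Remaining terms.} Apply $2a(a-b)\ge a^2-b^2$ to the time differences, and Cauchy--Schwarz/Young to the residuals and to the explicit terms $2(c_{00}-\gamma)\errc^n$ and $2c_{02}R^n$.
\end{itemize}

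The result is
\[
\frac{\varepsilon^{n+1}-\varepsilon^n}{2\delta t}\le C\big(\varepsilon^{n+1}+\varepsilon^n\big)+C\big(\delta t^2+h^4\big).
\]

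\textbf{Step 3 (Gronwall).} For $\delta t$ small, the discrete Gronwall inequality with $\varepsilon^0=0$ yields $\varepsilon^n\le C(\delta t^2+h^4)$, with $C$ depending on $T$.

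\textbf{Main obstacle.} The delicate point is the boundary consistency. The paper treats $D_{\bfn}c$ as independent unknowns linked to ghost values, so I must check that the exact solution's residual in \eqref{discrete_3} and in the $\Delta_h^2$ stencils at nodes with $l,m\in\{1,N-1\}$ is still $O(h^2)$. At these nodes the stencil is rewritten through \eqref{D11_0m}, so the extension-based truncation bound of Lemma \ref{trunc} on $\Omega_h$ must be applied carefully. The boundary residual should be measured in the $h\sum_\sigma$ norm so that it scales correctly against $\varepsilon^n$.
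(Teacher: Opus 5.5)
Your proposal is correct and follows the same route as the paper's proof. The shared steps are: error equations with $O(\delta t+h^2)$ truncation errors from Lemma~\ref{trunc}; testing with $\errc^{n+1}$, $R^{n+1}$ and $D_{\bfn}\errc^{n+1}$; summation by parts, so that the boundary term $2c_{40}h\sum_\sigma(\Delta_h\errc^{n+1}D_{\bfn}\errc^{n+1})_\sigma$ cancels against the tested boundary equation; monotonicity from Corollary~\ref{cor-q}; and discrete Gronwall. Note that this boundary term appears with a plus sign next to $-2\hat{E}_+(\errc^{n+1},R^{n+1})$, not with the minus sign you wrote, and the plus sign is what makes the cancellation work.

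The one step I would not accept as written is absorbing the mass mismatch into the residual, although the paper passes over it too. Here $h^2\sum_P\errc^{n+1}_P$ is generically $O(h^2)$ but nonzero. The leftover term, the discrete mean of $\errc^{n+1}$ times $h^2\sum_P$ of the bracket in the error equation, contains $h^2\sum_P\partial\qeN(c^{n+1},Q^{n+1})/\partial c^{n+1}$, and you have no uniform bound for this quantity. Instead, compare with the exact solution minus an $O(h^2)$ multiple of a fixed smooth function supported away from $\partial\Omega$, chosen to restore exact discrete mass; this perturbs the residuals and the $\qeN$ terms only by $O(h^2)$ and leaves $D_{\bfn}\errc$ unchanged.
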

\begin{proof}
  The equations for $\errc ^n,R_{ij}^n$ are derived as
  \begin{align*}
    & \frac{\errc ^{n+1}-\errc ^{n}}{\delta t}=-\hat{\scrG} \Big(\frac{\partial \qeN(c^{n+1}(\mr),Q^{n+1}(\mr))}{\partial c^{n+1}(\mr)}-\frac{\partial \qeN(c(\mr,t^{n+1}),Q(\mr,t^{n+1}))}{\partial c(\mr,t^{n+1})}                 \\
    & \qquad+2(c_{00}-\gamma)\errc ^{n}+L_c[\errc ^{n+1},R^{n+1}]\Big)+U_h^n. \\
    & \frac{R_{ij}^{n+1}-R_{ij}^{n}}{\delta t}=-\scrP \Big(\frac{\partial \qeN(c^{n+1}(\mr),Q^{n+1}(\mr))}{\partial Q_{ij}^{n+1}(\mr)}-\frac{\partial \qeN(c(\mr,t^{n+1}),Q(\mr,t^{n+1}))}{\partial Q_{ij}(\mr,t^{n+1})} \\
    & \qquad+2c_{02}R_{ij}^n+L_{Q_{ij}}[\errc ^{n+1},R^{n+1}]\Big)+V_h^n.  \\
    &{D_{\bfn }\errc ^{n+1}-D_{\bfn }\errc ^{n}\over\delta t}=-\Delta_h \errc ^{n+1}+Z_{B}^n. 
  \end{align*}
  The truncation errors can be estimated using Lemma \ref{trunc} and the results for the time discretization in (\ref{truncation}), 
  \begin{equation}\nonumber
    |U_h^n|+|V_h^n|\leq C(\delta t+h^2), \qquad |Z_{B}^n|\leq C(\delta t+h^2),
  \end{equation}
  with $ C $ dependent on the maximum derivatives on $\Omega\times[0,T]$. Take dot product on three equations with $\errc ^{n+1}$, $R_{ij}^{n+1}$, $D_{\bfn}\errc^{n+1}$, respectively.
  Armed with summation by parts, we are able to follow the same derivation of (\ref{Error}) to conclude the proof.
\end{proof}

\section{Numerical Results}\label{NE}
Computations are carried out in the square domain $\Omega=[0,L]\times[0,L]$ where $L$ takes a few different values. 
Fix $M=100,\nu=0.1$, while the average concentration $c_0$ varies (defined in \eqref{avc}). 
The parameters in the free energy are chosen according to the calculations in \cite{Han2015From} from the hard repulsive potential, which prove to effectively describe the smectic structures \cite{Mei2015On}: 
$c_{00}=0.112$, $c_{02}=-0.0735$, $c_{20}=-0.00442$, $c_{21}=0.00091$, $c_{22}=0.0034$, $c_{23}=-0.0083$, and $c_{40}=0.00014$. 
The scheme \eqref{t_discrete_c}--\eqref{t_discrete_bnd} is nonlinear, which we solve by damped Newton's method: whenever the iteration goes out of the domain $(\cQdomM)^{\circ}$, the step size is halved until the trial lies within $(\cQdomM)^{\circ}$. 

To illustrate the numerical results, we use $\lambda_\mathrm{max}(Q/c)$ to denote the maximum eigenvalue of $Q/c$, which quantifies the intensity of anisotropy. 
It ranges in $[0,0.5)$, with the value $\lambda_\mathrm{max}(Q/c)=0$ representing the isotropic state. 
The corresponding eigenvector indicates the direction along which the molecules are aligning. 

\subsection{Uniform boundary concentration anchorings}
\begin{figure}[htbp]
  \centering
  \includegraphics[width=1.0\columnwidth]{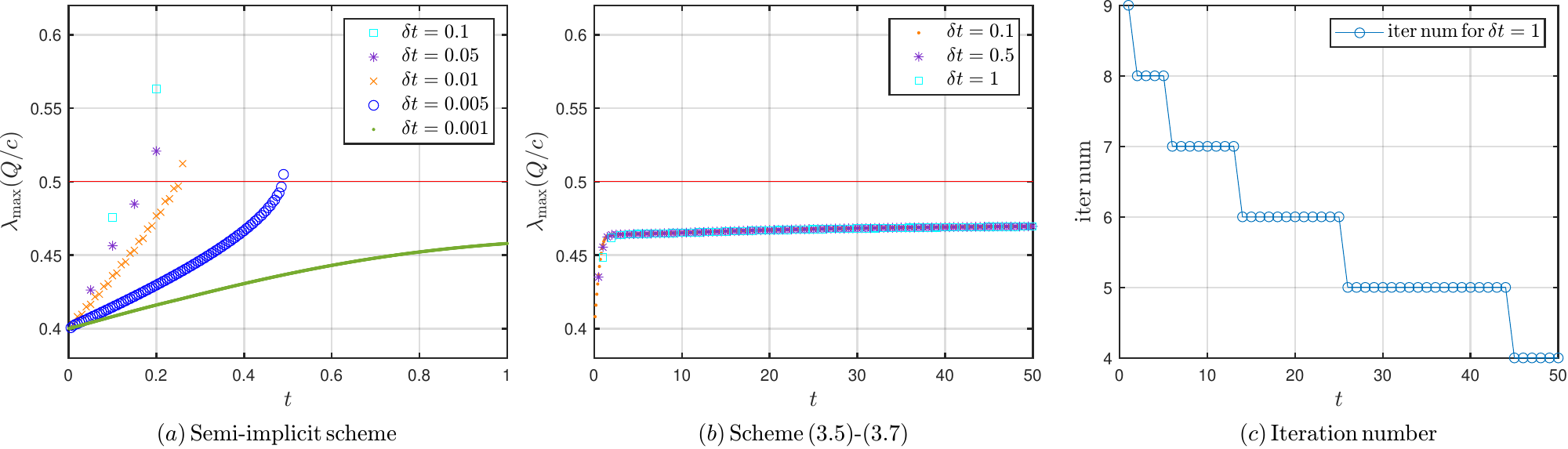}
  \caption{ (a) Maximum eigenvalues of $Q/c$ for the semi-implicit scheme. (b) Maximum eigenvalues of $Q/c$ for the scheme \eqref{t_discrete_c}--\eqref{t_discrete_bnd}. (c) Number of Newton's iteration for $\delta t=1$.}\label{perf}
\end{figure}

On the boundary $\partial\Omega$, let $c=c_0, \lambda_\mathrm{max}(Q/c)=0.4$ with
the principal eigenvector chosen parallel to the boundary. 
The initial condition is chosen as $c=c_0, \lambda_\mathrm{max}(Q/c)=0.4$ with
the principal eigenvector taking a constant vector
${(-1/\sqrt{2},1/\sqrt{2})}^T$.
This setting is analogous to the confined nematic liquid crystals with parallel boundary anchorings \cite{Yin2020Construction,Han2021Solution}, but concentration variations are incorporated. 
We shall examine the results from two aspects: the performance of the scheme, and how the configurations change with $c_0$, $L$. 

\textbf{Performance of the scheme.}
We choose $L=\sqrt{10}$ and $c_0=20$. Under this average concentration, the free energy shall exhibit the smectic phase as its minimizer. However, the anchoring $c=c_0$ on the boundary would reconstruct the structures to a large extent.
To investigate the performance of the scheme, we also employ a simple semi-implicit scheme, where the linear terms are discretized implicitly while the nonlinear terms are treated explicitly.
The semi-implicit scheme is written as 
\begin{equation*}
  \begin{aligned}
    & \frac{c^{n+1}-c^n}{\delta t}=- \scrG \bigg( \frac{\partial \qeN(c^{n},Q^{n})}{\partial c^{n}}+2c_{00}c^{n+1}-2c_{20}\Delta c^{n+1}                                  \\
    & \qquad\qquad  -c_{23}\partial_{ij}Q_{ij}^{n+1}+2c_{40}\Delta^2c^{n+1}\bigg),  \\
    & \frac{Q_{ij}^{n+1}-Q_{ij}^n}{\delta t}=-  \scrP \Bigg( {\left(\frac{\partial \qeN(c^{n},Q^{n})}{\partial Q^{n}}\right)}_{ij}+2c_{02}Q^{n+1}_{ij}-2c_{21}\Delta Q_{ij}^{n+1} \\
    & \qquad\qquad-2c_{22}\partial_{ik}Q_{kj}^{n+1}-c_{23}\partial_{ij}c^{n+1}\Bigg),
  \end{aligned}
\end{equation*}
with the boundary equation conditions discretized as \eqref{t_discrete_bnd}.

As it turns out, the scheme \eqref{t_discrete_c}--\eqref{t_discrete_bnd} significantly outperforms the semi-implicit scheme.
The major point affecting the performance lies in the fact that the semi-implicit scheme is unable to ensure the numerical solution lying within $(\cQdomM)^{\circ}$. 
We plot the maximum eigenvalue of $Q/c$ in Fig. \ref{perf} (a) using various time steps.
For $\delta t$ no less than $0.005$, the maximum eigenvalue exceeds $1/2$ before $t=1$, which results in a forced stop in simulation. 
Only when $\delta t$ takes $0.001$ could the simulation be carried out through $t=1$.
In contrast, for the scheme \eqref{t_discrete_c}--\eqref{t_discrete_bnd} the computations can be done up to $\delta t=1$ (Fig. \ref{perf} (b)). 
In other words, compared with the semi-implicit scheme, a time step at least $200$ times larger can be adopted for the scheme \eqref{t_discrete_c}--\eqref{t_discrete_bnd}.
Indeed, solving the nonlinear scheme costs more in each time step due to nonlinear iterations.
Nevertheless, the number of Newton's iterations, plotted in Fig. \ref{perf} (c), is no greater than ten and decreases to four with time. 
Therefore, the increase in the computational cost for each time step can be sufficiently compensated by the capacity of using large $\delta t$.
Actually, in this example, the solution only exhibits mild variations both in time and in space, as we can realize from the energy dissipation (Fig.~\ref{cc_20_lambda2_10}(a)) and the configuration at $t=50$ (Fig.~\ref{cc_20_lambda2_10}(b)).
In this sense, constraining the numerical solution within $(\cQdomM)^{\circ}$ is a quite difficult task and deserves special care, for which an implicit discretization of the entropy term $\qeN$ is well suited.

\begin{figure}[htbp]
  \centering
  \includegraphics[width=1\columnwidth]{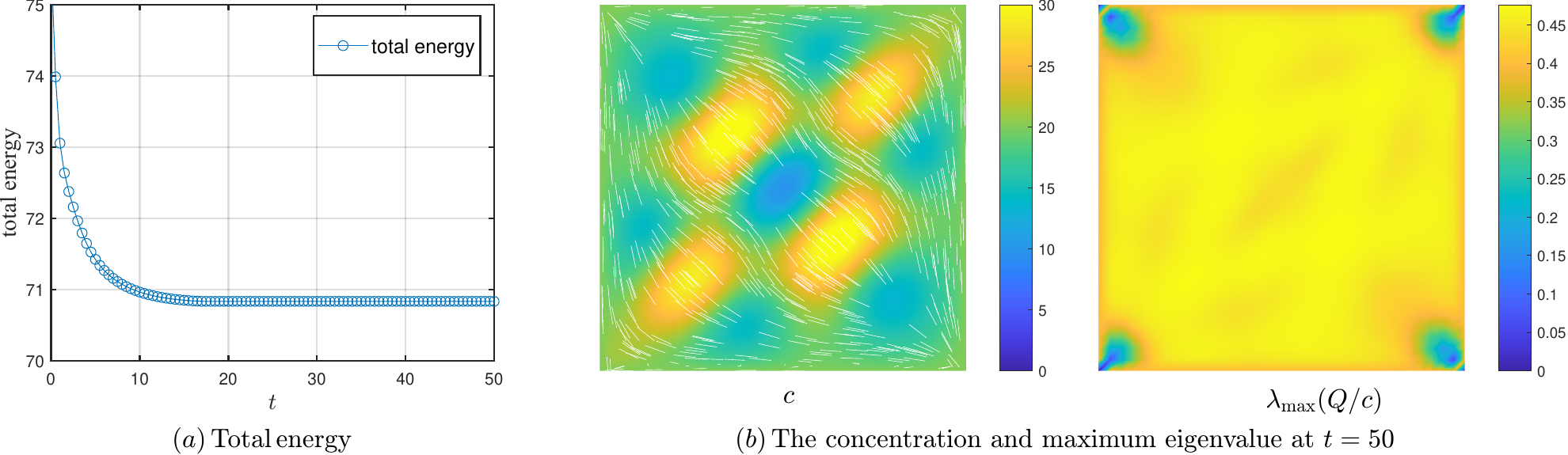}
  \caption{(a) Energy. (b) The concentration and eigenvalue at $t=50$, computed on a mesh $N=64$ and time step $\delta t=3.125\times10^{-4}$. The principal eigenvectors are plotted in the figure of concentration. }\label{cc_20_lambda2_10}
\end{figure}

Next, we examine the accuracy using the solution at $t=50$. 
The reference solution is acquired with $N=64$ and a small time step $\delta t=3.125\times10^{-4}$. 
For the temporal error, we fix the spatial resolution at $N=64$ and vary the time step as $\delta t=0.04,0.02,0.01,0.005$.
The numerical error $\varepsilon $ is plotted in Fig.\ref{error-plot}(a), confirming the first order accuracy. 
Then, for the spatial error, we choose $\delta t=h^2/10$ and vary $N=L/h$.
The error $\varepsilon $ is plotted in Fig.\ref{error-plot}(b), where a second order accuracy is recognized.

To visualize the development towards the steady state,
we present some snapshots of the solution in Fig.\ref{evolution}.
From the spatially homogeneous orientation, streamlines are formed connecting the upper-left and lower-right corners.
Then, spatial variations of the concentration gradually emerge, which disturb the streamlines to align with concentration gradients. 
Layer structures are formed along the diagonal of the square. 

\begin{figure}[htbp]
  \centering
  \includegraphics[width=0.95\columnwidth]{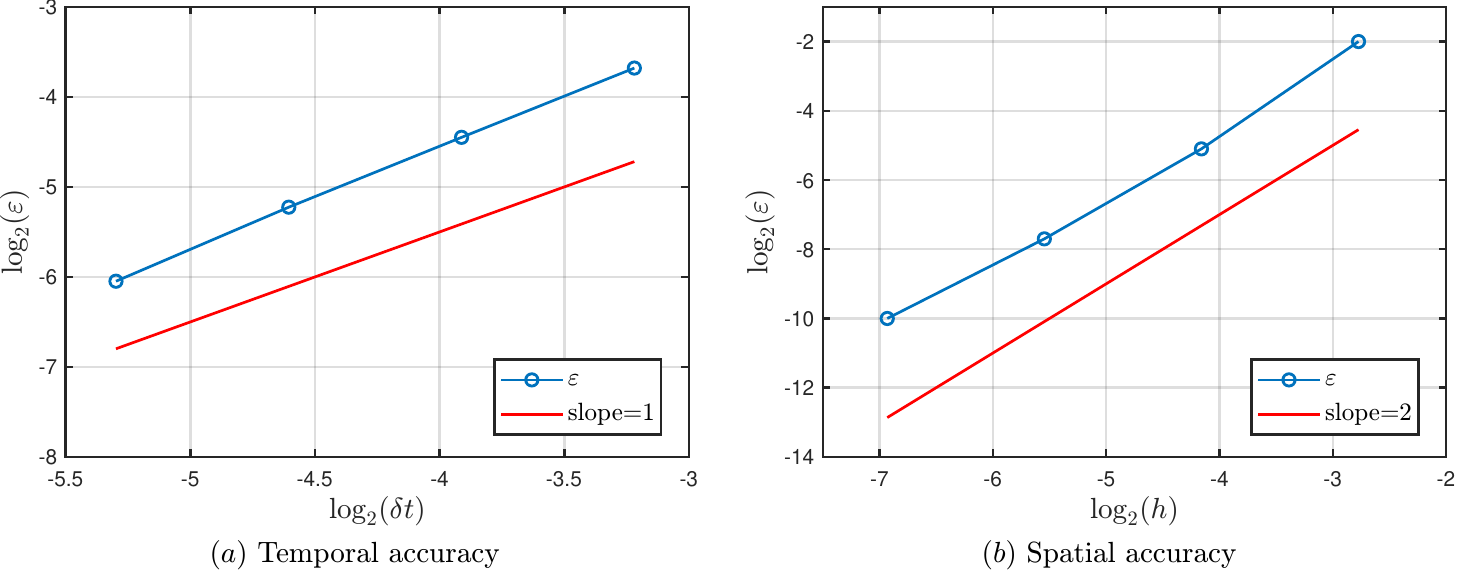}
  \caption{The error $\varepsilon $ with different (a) time steps for $N=64$; and (b) spatial discretizations with $\delta t=h^2/10$. }\label{error-plot}
\end{figure}

\begin{figure}[h]
  \centering
  \includegraphics[width=0.95\columnwidth]{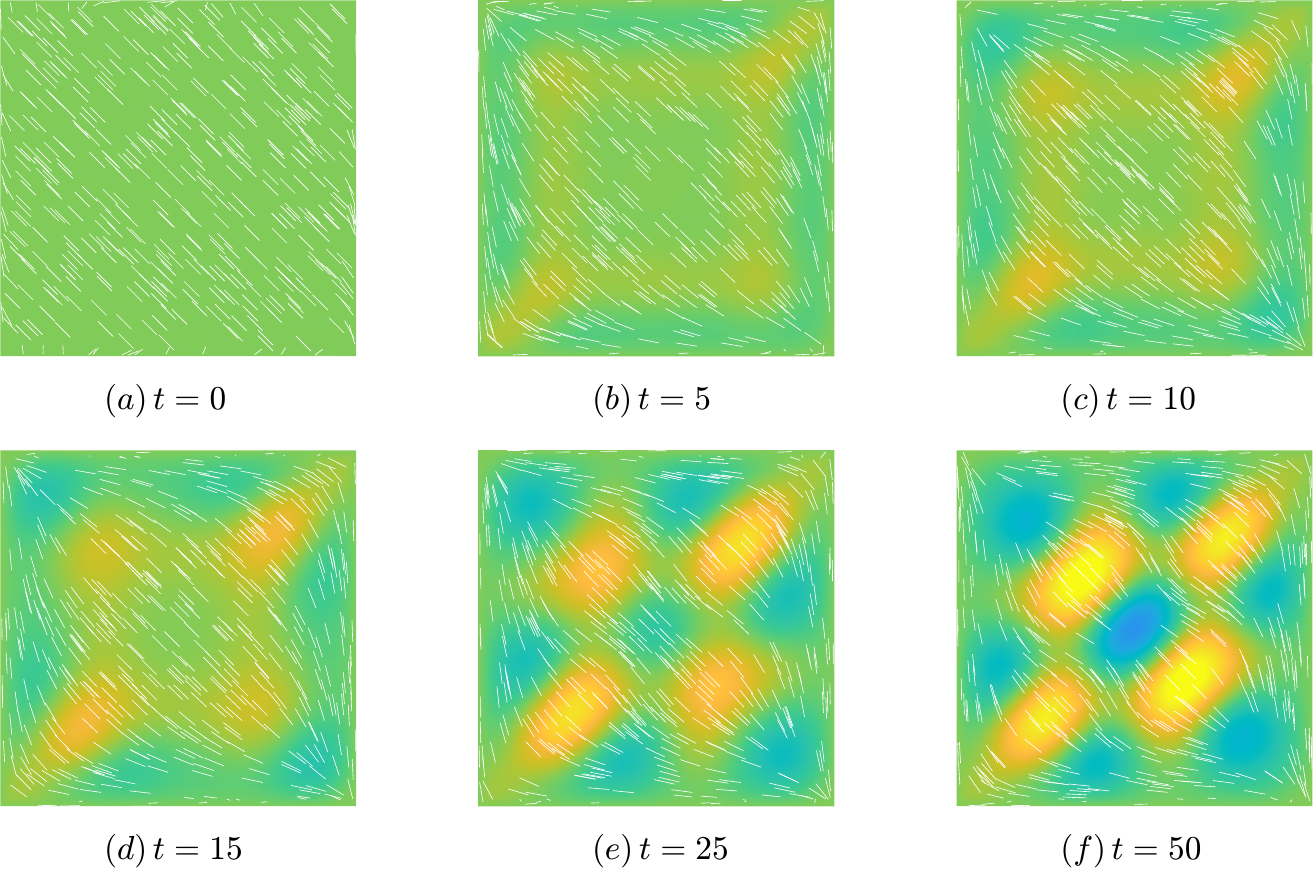}
  \caption{Snapshots of the solution for $c_0=20$ and $L=\sqrt{20}$. The colorbar is the same as the density profile shown in Fig.\ref{cc_20_lambda2_10}.}
  \label{evolution}
\end{figure}

\begin{figure}[h]
  \centering
  \includegraphics[width=0.7\columnwidth]{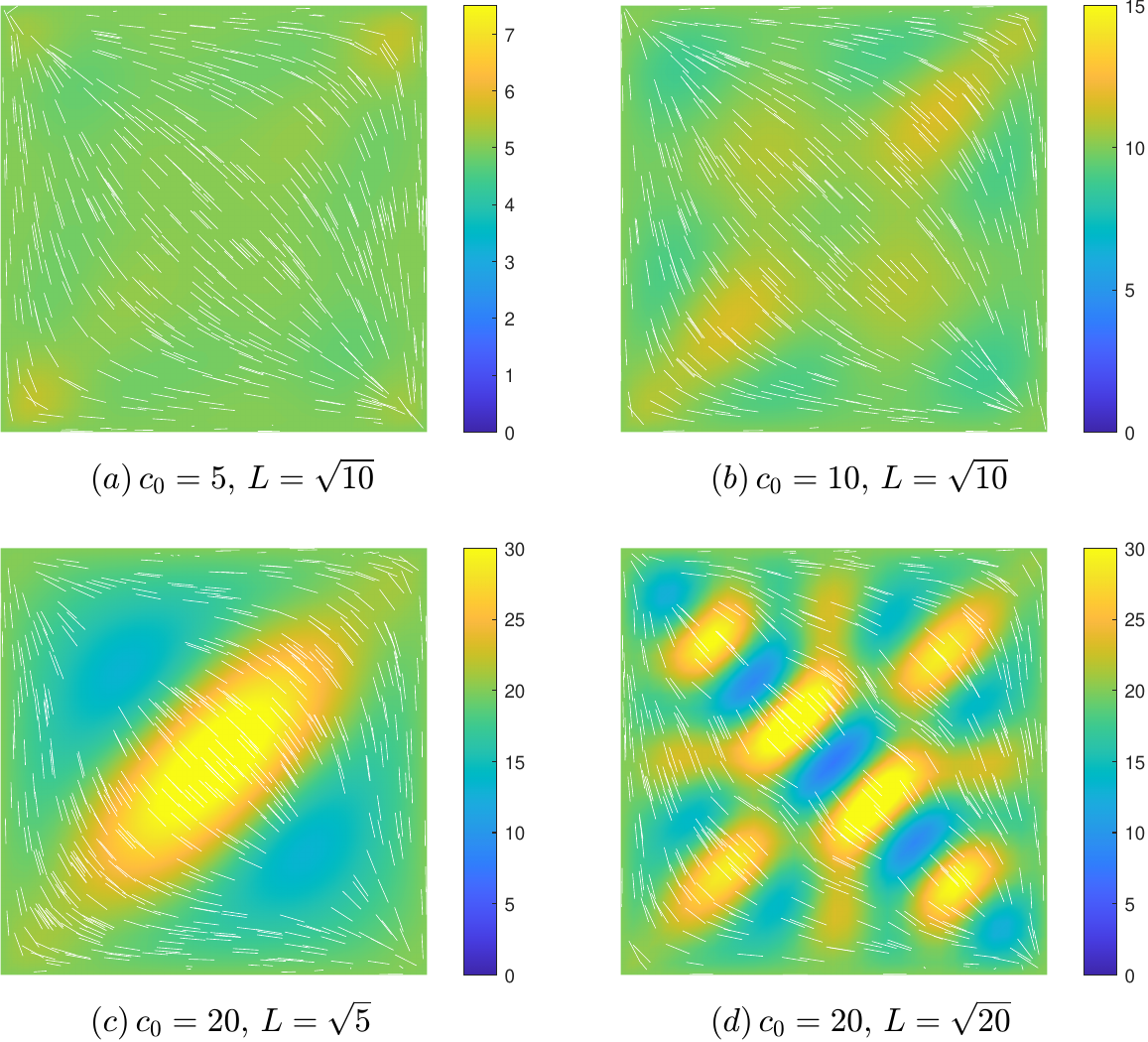}
  \caption{The steady states for (a) $c_0=5$, $L=\sqrt{10}$; (b) $c_0=10$, $L=\sqrt{10}$; (c) $c_0=20$, $L=\sqrt{5}$; (d) $c_0=20$, $L=\sqrt{20}$.}
  \label{cc_5_10}
\end{figure}

\textbf{Effect of average concentration and domain size.}
It is known that the average concentration $c_0$ greatly affects the structures. 
To comprehend its role, we examine two cases of lower concentration, $c_0=5,\,10$.
The steady states are illustrated in Fig.~\ref{cc_5_10} to compare with the configuration in Fig. \ref{cc_20_lambda2_10}.
For $c_0=5$, the result is very close to that for the nematic liquid crystals (cf. \cite{Yao2018,Yin2020Construction}).
When the average concentration increases to $c_0=10$, the layer structure can be observed ambiguously. 
The size of the computational region is another significant parameter that influences the structures.
We investigate the cases $L=\sqrt{5},\,\sqrt{20}$, drawn in Fig.~\ref{cc_5_10}.
For a smaller region $L=\sqrt{5}$, we only observe an accumulation in concentration, while for a larger region $L=\sqrt{20}$ four layers can be clearly distinguished. 

\subsection{Smectic boundary anchorings}

\begin{figure}[htpb]
  \centering
  \includegraphics[width=0.7\columnwidth]{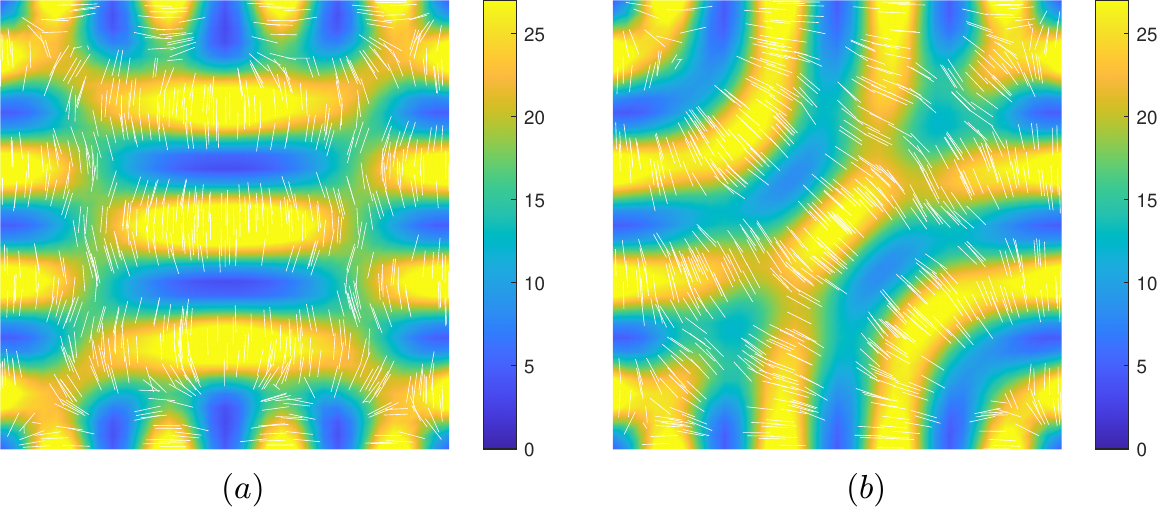}
  \caption{The steady states for $L=4d$ correspond to the initial states of (a) uniform vertical alignment and (b) uniform $135^\circ $ alignment.  }\label{alpha_4}
\end{figure}

\begin{figure}[htpb]
  \centering
  \includegraphics[width=0.9\columnwidth]{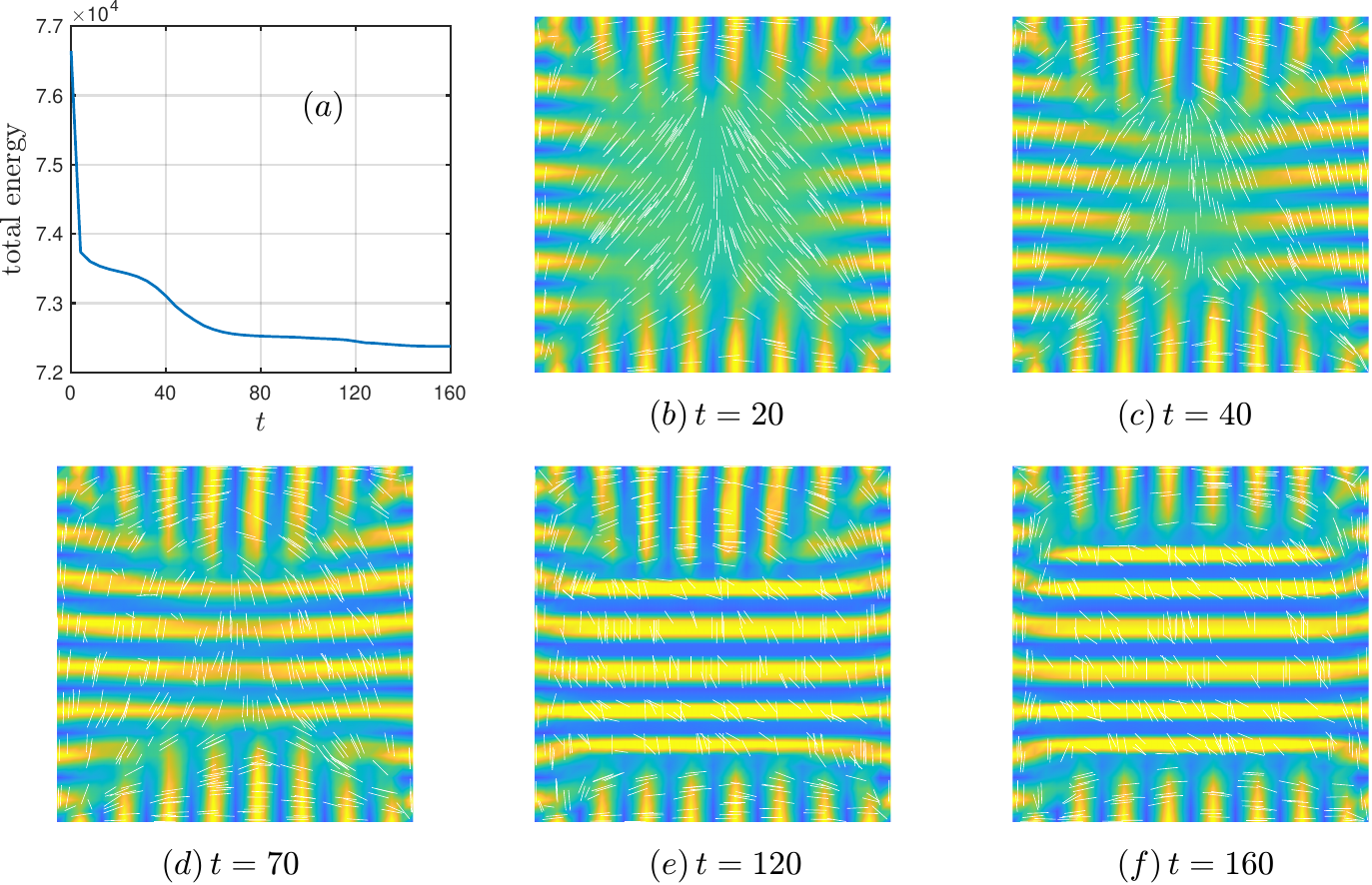}
  \caption{When the initial state consists of left-half $45^\circ $ alignment and right-half $135^\circ $ alignment for $L=8d$, the total energy versus time is shown in (a), and (b)-(f) are snapshots of the solution evolution at $t=20,40,70,120,160$.  The color bar corresponds to the density profile shown in Fig.\ref{alpha_4}. }\label{evolution2}
\end{figure}

\begin{figure}[htpb]
  \centering
  \includegraphics[width=0.9\columnwidth]{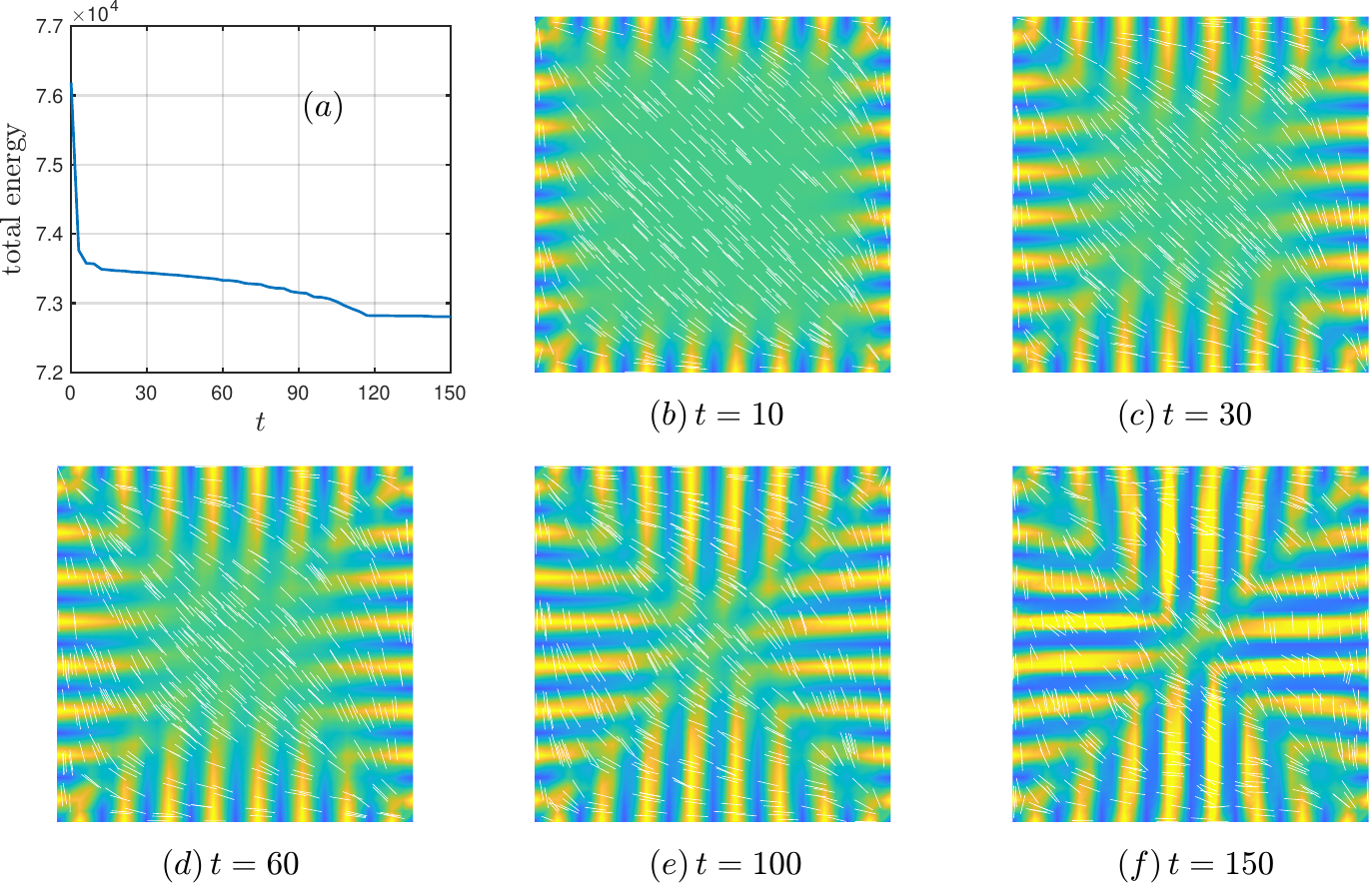}
  \caption{When the initial state is set to be $135^\circ $ alignment for $L=8d$, the total energy versus time is shown in (a), and (b)-(f) are snapshots of the solution evolution at $t=10,30,60,100,150$.  The colorbar follows the same scale as the density profile shown in Fig.\ref{alpha_4}.  }\label{evolution3}
\end{figure}

We turn to studying the configurations where the boundary is anchored with the smectic layers.
In what follows, the average density is fixed at $c_0=20$. 
To impose the boundary conditions, we need the profile of the ideal smectic phase.
It is obtained by minimizing the free energy for 1D periodic functions.
The minimization also needs to be done w.r.t. the length of period $d$.
As a result, the minimization problem is posed as follows, 
\begin{equation}\label{minimization}
  \min_{c(x),Q(x),d} {E[c(x),Q(x)]\over d}, \qquad \mathrm{s.t.}\qquad  {1\over d}\int_0^d c(x)\md x=c_0,\ c(d)=c(0),\ Q(d)=Q(0). 
\end{equation}
The optimized length of period $d$ is the layer thickness of the ideal smectic phase. 

We choose $L$ to be an integer multiple of $d$, and set the boundary values of $c$ and $Q$ according to the 1D periodic minimizer. 
Two cases $L=4d,\,8d$ are studied, with different initial conditions leading to different steady states. 
For $L=4d$, an initial state of uniform vertical alignment results in smectic layers along the side direction of the square region (Fig.\ref{alpha_4} (a)), while an initial state of uniform $135^\circ $ alignment evolves towards layers along the diagonal with bending.
For both cases, dislocations are found prone to have sharp corners. 
This pattern is more evident in the case $L=8d$, for which we also examine two initial configurations.
One has the principal vector pointing towards $45^\circ $ in the left half and $135^\circ $ in the right half of the square region.
The other has the uniform $135^\circ $ alignment.
The evolutions from the above two initial states are given in Fig. \ref{evolution2} and Fig. \ref{evolution3}, respectively. 
It is clear that the layer structures tend to keep the side directions of the square region, which are consistent with the boundary anchorings.
In particular, when two layer structures with different directions touch, their connections do not tend to be chevron or Omega-shaped that are frequently observed in layer structures without local anisotropy \cite{Ball2015Discontinuous,Xia2021Structural}, but prefer T-junction or sharp corner. 
We can also see from the snapshots that the layer structures are induced by the principal eigenvector to a large extent.
This explains the fact that in Fig. \ref{evolution2} the system eventually forms layers throughout the two opposite sides of the square region, because near the center of the region the principal vector is almost vertical.
Instead, in Fig. \ref{evolution3} the principal vector near the center of the region is roughly $135^{\circ}$, so that the layers change their directions here. 
From the energy plot we are aware of the fact that the state in Fig. \ref{evolution2} is lower than Fig. \ref{evolution3}, but the latter is indeed a minimizer since we perturb it and it evolves towards the same state.

The examples we present above indicate that in smectic phases, the interplay between the local anisotropy, the layer structures, and the defects would be quite complicated.
Multiple minimizers, i.e. locally stable states, would be commonly encountered.
The gradient flow as well as the numerical methods in this paper shall be useful for further explorations. 

\section{Conclusions}\label{conclusion}

We construct a tensor gradient flow for the smectic rod-like liquid crystals with the orientation restrained within the plane. 
The free energy is featured by the quasi-entropy as a strictly convex, lower semicontinuous function of the concentration and the tensor giving the coupled constraints. 
For the squared Laplacian term, an evolution equation for the boundary normal derivative is proposed consistent with the energy dissipation law. 
The numerical scheme is designed with special attention to the coupled constraints by discretizing the quasi-entropy implicitly.
Spatial discretizations are carefully written down to be consistent with the boundary evolution equation, keeping the dissipation structure. 
As a result, we establish the existence and uniqueness, energy dissipation and error estimates for both the time discretization and the full discretization. 
The scheme proves to be crucial for the efficiency and robustness in the computation, especially for maintaining the coupled constraints that would be otherwise difficult to attain. 
Defects are found distinct from other layer structures due to the presence of local anisotropy. 

The entropy term proposed in the current work can be extended to handle liquid crystals formed by rigid molecules allowing $SO(3)$ rotations, which may possess complex architectures \cite{Xu2020,Xu_2020,xu_2022_kernel,Xu2022Quasi}. 
Another significant problem is to incorporate dissipation operators derived from the microscopic theory, which typically involve high order tensors.

\textbf{Acknowledgements.}  This work is partially supported by Beijing Natural Science Foundation (No. JQ25002), National Natural Science Foundation of China (Nos. 12301552, 12288201, 12371414), the Strategic Priority Research Program of the Chinese Academy of Sciences (No. XDB0510201), National Key R\&D Program of China (No. 2023YFA1008802).

\appendix
\section{Summation by parts}
Firstly, we derive the equality in (\ref{Dkc}). By direct computation,
\begin{equation}\label{m}
\sum_{l=0}^{N-1}(u_{l+1,m}-u_{l,m})(v_{l+1,m}-v_{l,m})                                                       =\sum_{l=1}^{N}-h^2u_{l,m}D_{11}v_{l,m}-u_{0,m}(v_{1,m}-v_{0,m})+u_{N,m}(v_{N,m}-v_{N-1,m}).
\end{equation}
Substituting (\ref{D11_0m}) and corresponding identities into (\ref{m}), and then substituting the resulting expression into (\ref{D_1D_1}), we have
\begin{equation}\label{D1uD1v}
       \sum_\Gamma{(D_1uD_1v)}_\Gamma
    =  -\sum_\Gamma{(uD_{11}v)}_\Gamma+{1\over 2h}\sum_{m=0}^{N-1}\Big({(uD_\bfn^x v)}_{0,m}+{(uD_\bfn^x v)}_{0,m+1}+{(uD_\bfn^x v)}_{N,m}+{(uD_\bfn^x v)}_{N,m+1}\Big).
\end{equation}
Similarly, we have 
\begin{equation}\label{D2uD2v}
\sum_\Gamma{(D_2uD_2v)}_\Gamma                                                                              =  -\sum_\Gamma{(uD_{22}v)}_\Gamma+{1\over 2h}\sum_{l=0}^{N-1}\Big({(uD_\bfn^y v)}_{l,0}+{(uD_\bfn^y v)}_{l+1,0}+{(uD_\bfn^y v)}_{l,N}+{(uD_\bfn^y v)}_{l+1,N}\Big).
\end{equation}
The sum of (\ref{D1uD1v}) and (\ref{D2uD2v}) gives (\ref{Dkc}).

Next, we focus on the equality in (\ref{Dlc}). For $m=0,\ldots,N$,
combining with the boundary conditions $v_{0,m}=v_{N,m}=0$, we derive
\begin{equation}\label{m21}
  \begin{aligned}
      & \sum_{l,m=0}^{N-1}{\big({D_{11}uD_{11}v}\big)}_{l,m}\\
    = & \sum_{l,m=0}^{N-1}{\big(D_{11}u\big)}_{l,m}{v_{l-1,m}-2v_{l,m}+v_{l+1,m}\over h^2} \\
    = & \sum_{m=0}^{N-1}\Big(\sum_{l=1}^{N-1}{{\big(D_{11}u\big)}_{l+1,m}-2{\big(D_{11}u\big)}_{l,m}+{\big(D_{11} u\big)}_{l-1,m}\over h^2}v_{l,m} +{\big({D_{11}u}\big)}_{0,m}{v_{-1,m}\over h^2}-{\big({D_{11}u}\big)}_{N,m}{v_{N-1,m}\over h^2}\Big)\\
    = & \sum_{l,m=1}^{N-1}{\big(D_{1111}u\big)}_{l,m}v_{l,m}+\sum_{m=0}^{N-1}\Big({\big({D_{11}u}\big)}_{0,m}{v_{-1,m}\over h^2}-{\big({D_{11}u}\big)}_{N,m}{v_{N-1,m}\over h^2}\Big).
  \end{aligned}
\end{equation}
Similarly, together with the boundary conditions $v_{l,0}=v_{l,N}=0$ for $l=0,\ldots,N$, we have
\begin{equation}\label{m22}
\begin{aligned}
       &\sum_{l,m=0}^{N-1}{\big({D_{11}uD_{11}v}\big)}_{l+1,m}  =  \sum_{l,m=1}^{N-1}{\big(D_{1111}u\big)}_{l,m}v_{l,m}+\sum_{m=0}^{N-1}\Big(-{\big({D_{11}u}\big)}_{0,m}{v_{1,m}\over h^2}+{\big({D_{11}u}\big)}_{N,m}{v_{N+1,m}\over h^2}\Big), \\  
       &\sum_{l,m=0}^{N-1}{\big({D_{11}uD_{11}v}\big)}_{l,m+1}  =  \sum_{l,m=1}^{N-1}{\big(D_{1111}u\big)}_{l,m}v_{l,m}+\sum_{m=0}^{N-1}\Big({\big({D_{11}u}\big)}_{0,m+1}{v_{-1,m+1}\over h^2}-{\big({D_{11}u}\big)}_{N,m+1}{v_{N-1,m+1}\over h^2}\Big),\\
       &\sum_{l,m=0}^{N-1}{\big({D_{11}uD_{11}v}\big)}_{l+1,m+1}  =  \sum_{l,m=1}^{N-1}{\big(D_{1111}u\big)}_{l,m}v_{l,m}+\sum_{m=0}^{N-1}\Big(-{\big({D_{11}u}\big)}_{0,m+1}{v_{1,m+1}\over h^2}+{\big({D_{11}u}\big)}_{N,m+1}{v_{N+1,m+1}\over h^2}\Big).
\end{aligned}
\end{equation}
Substituting (\ref{m21}) and (\ref{m22}) into the summation of (\ref{D11uD11v}) in space, we derive that
\begin{equation}\label{DD1}
  \begin{aligned}
      & \sum_\Gamma{(D_{11}uD_{11}v)}_\Gamma                                                                                                                                                                   \\
    = & \sum_{l,m=1}^{N-1}{\big(vD_{1111}u\big)}_{l,m}+\sum_{m=0}^{N-1}{1\over 2h}\Big({\big({D_{11}u}D_\bfn^x v\big)}_{0,m}+{\big({D_{11}u}D_\bfn^x v\big)}_{0,m+1}                 
       +{\big({D_{11}u}D_\bfn^x v\big)}_{N,m}+{\big({D_{11}u}D_\bfn^x v\big)}_{N,m+1}\Big).
  \end{aligned}
\end{equation}
In similar ways, we can also derive that
\begin{equation}\label{DD2}
  \begin{aligned}
    \sum_\Gamma{(D_{22}uD_{11}v)}_\Gamma= 
     & \sum_{m=1}^{N-1}\sum_{l=1}^{N-1}{\big(vD_{2211}u\big)}_{l,m}+\sum_{m=0}^{N-1}{1\over 2h}\Big({\big({D_{22}u}D_\bfn^x v\big)}_{0,m}+{\big({D_{22}u}D_\bfn^x v\big)}_{0,m+1} \\
     & +{\big({D_{11}u}D_\bfn^x v\big)}_{N,m}+{\big({D_{11}u}D_\bfn^x v\big)}_{N,m+1}\Big),                                                                                              \\
    \sum_\Gamma{(D_{11}u D_{22}v)}_{\Gamma}= 
     & \sum_{m=1}^{N-1}\sum_{l=1}^{N-1}{\big(vD_{1122}u\big)}_{l,m}+\sum_{l=0}^{N-1}{1\over 2h}\Big({\big({D_{11}u}D_\bfn^y v\big)}_{l,0}+{\big({D_{11}u}D_\bfn^y v\big)}_{l+1,0} \\
     & +{\big({D_{11}u}D_\bfn^y v\big)}_{l,N}+{\big({D_{11}u}D_\bfn^y v\big)}_{l+1,N}\Big),                                                                                              \\\sum_\Gamma{(D_{22}uD_{22}v)}_\Gamma=&\sum_{m=1}^{N-1}\sum_{l=1}^{N-1}{\big(vD_{2222}u\big)}_{l,m}+\sum_{l=0}^{N-1}{1\over 2h}\Big({\big({D_{22}u}D_\bfn^y v\big)}_{l,0}+{\big({D_{22}u}D_\bfn^y v\big)}_{l+1,0}\\
     & +{\big({D_{22}u}D_\bfn^y v\big)}_{l,N}+{\big({D_{22}u}D_\bfn^y v\big)}_{l+1,N}\Big).                                                                                              \\
  \end{aligned}
\end{equation}
Thus, the sum of (\ref{DD1}) and (\ref{DD2}) proves (\ref{Dlc}).

\bibliographystyle{plain}
\bibliography{convex-analysis}
\end{document}